\documentclass[11pt]{amsart}
\usepackage[a4paper,margin=1in]{geometry}
\usepackage{amsmath,amssymb,amsthm,mathtools}
\usepackage{enumitem}
\usepackage[pdfencoding=auto,psdextra,hidelinks]{hyperref}
\usepackage{microtype}

\newtheorem{theorem}{Theorem}[section]
\newtheorem{proposition}[theorem]{Proposition}
\newtheorem{lemma}[theorem]{Lemma}
\newtheorem{corollary}[theorem]{Corollary}
\newtheorem{conjecture}[theorem]{Conjecture}
\numberwithin{equation}{section}
\theoremstyle{remark}
\newtheorem{remark}[theorem]{Remark}

\newcommand{\qbinom}[2]{\genfrac{[}{]}{0pt}{}{#1}{#2}_q}

\newcommand{\R}{\mathbb{R}}
\newcommand{\Lfun}{\mathcal L}

\newcommand{\PF}{\mathrm{PF}}

\DeclareMathOperator{\diag}{diag}

\title[Two $q$-Deformations of Narayana Polynomials]{Two $q$-Deformations of Narayana Polynomials:\\
Rogers Moments, Hankel Positivity, and $q$-Narayana Kernels}
\author{Jiang Zeng}
\address{College of Mathematics and Physics, Wenzhou University, Wenzhou 325035, China}
\address{Universit\'e Claude Bernard Lyon 1, CNRS UMR 5208, Institut Camille Jordan, France}
\email{zeng@math.univ-lyon1.fr}
\date{20 October 2026}
\keywords{Affine Rogers moments; $q$-Narayana kernels; continuous $q$-ultraspherical polynomials; Rogers polynomials; total positivity; P\'olya-frequency sequences; $q$-Bessel functions; Stieltjes moment sequences.}
\subjclass[2020]{05A15, 05A20, 15B48, 33D15, 33D45}
\hypersetup{
  pdftitle={Two q-Deformations of Narayana Polynomials},
  pdfauthor={Jiang Zeng},
  pdfsubject={Combinatorics, total positivity, and basic hypergeometric functions}
}
\begin{document}
\begin{abstract}

We study two $q$-deformations of the continuous interpolation between type $B$ and shifted type $A$ Narayana polynomials, both constructed from the continuous $q$-ultraspherical (Rogers) functional. The first consists of affine moments of ordinary powers. For fixed $0<q<1$ and $-1<\beta<1$, we prove coefficientwise Hankel total positivity in the polynomial variable by a continuant comparison criterion. We also determine leading continuants, Hankel determinants, and extreme coefficients of arbitrary Hankel minors.

The second deformation uses Askey--Wilson basis products and yields a basic-hypergeometric Narayana kernel. We give two independent moment evaluations and prove that its row polynomials have simple negative zeros and are $\gamma$-positive after palindromic normalization. An Askey--Wilson integral with base $q^2$ gives a continuous-parameter extension and a moment proof of the Krattenthaler--Wachs super $q$-Narayana expansion, including its integrality. An ordinary-power expansion also proves Pan's Touchard identity.

For fixed $0<q<1$ and $\theta>-1$, we prove total positivity of the kernel triangle by a Jackson $q$-Bessel--Edrei factorization and strict total positivity of its associated square by a discrete moment representation. This gives a simultaneous $(\theta,q)$-extension of Wang--Yang's theorems for type $A$ and type $B$ Narayana triangles and squares. We further establish coefficientwise total nonnegativity in $q$ through order two for the endpoint triangles. An exact triangular change of basis links the two deformations and explains their common classical limit.
\end{abstract}
\maketitle

\section{Introduction}
The classical one-parameter interpolation between the type~$B$ and
shifted type~$A$ Narayana kernels is
\begin{equation}\label{eq:classical-kernel}
K_{n,k}^{(\theta)}=\binom nk
\frac{(\theta+1)_n}{(\theta+1)_k(\theta+1)_{n-k}},\quad\theta\ge0.
\end{equation}
Its classical Hankel positivity, real-rootedness, and
log-convexity-preservation properties are studied in a companion
paper~\cite{ZengNarayanaRogers}. Han~\cite{Han2026} has studied Hankel
determinants for convolution powers of Narayana polynomials; his
convolution setting is distinct from the Rogers-moment deformations
considered here. We investigate \emph{two} distinct
$q$-deformations, both constructed using the continuous
$q$-ultraspherical (Rogers) functional $\mathcal L_{\beta,q}$:
\begin{equation}\label{eq:two-deformations-intro}
 A_n(t;\beta,q)=\mathcal L_{\beta,q}
 \big[(1+t+2\sqrt t\,x)^n\big],\qquad
 P_n^{(\theta)}(t;q)=\mathcal L_{q^\theta,q}
 \big[(-\sqrt t\,e^{i\phi},-\sqrt t\,e^{-i\phi};q)_n\big].
\end{equation}
The second formula is a positive Rogers integral for $\theta>0$,
and extends algebraically to $\theta>-1$. The first formula is
studied with $0<q<1$ and $-1<\beta<1$.
Although the two sequences coincide in the classical limit after
$\beta=q^\theta$ ($\theta>0$), they are different at fixed $q$:
the former uses ordinary powers, while the latter uses Askey--Wilson
basis products.

\paragraph{Affine Rogers moments.}
Our first family has a coefficientwise Hankel-total-positivity
theorem, valid for fixed $q,\beta$ in the ranges above. The theorem
also applies to the moment sequences generated by all associated
Jacobi tails. The proof uses the Rogers recurrence
and a leading-continuant criterion. A positive recursion for shifted
continuants and a general formula for the extreme coefficients of
arbitrary Hankel minors supplement the theorem. These are
coefficientwise assertions in $t$, not in the deformation base $q$.

\paragraph{Askey--Wilson basis and matrix total positivity.}
Our second family has coefficient kernel
\begin{equation}\label{eq:qkernel-intro}
 K_{n,k}^{(\theta)}(q)=q^{k(k-1)}\qbinom nk
 \frac{(q^{\theta+1};q)_n}
 {(q^{\theta+1};q)_k(q^{\theta+1};q)_{n-k}},
\end{equation}
so $P_n^{(\theta)}(t;q)=\sum_{k=0}^nK_{n,k}^{(\theta)}(q)t^k$.
Independent Fourier and Askey--Wilson/Nasrallah--Rahman evaluations
give the moment formula. The rows have simple negative zeros and,
after the natural palindromic normalization, are $\gamma$-positive.
A second expansion uses the shifted products
$t^j(-tq^{2j};q^2)_{n-2j}$. The Ismail--Stanton $q$-Taylor basis
gives positive Rogers-integral coefficients for $\theta\ge0$.
We evaluate them by the Askey--Wilson integral with base $q^2$,
obtaining an explicit continuous-parameter expansion.
At $\theta=0,1$, elementary constant-term evaluations also recover
the known $q$-$\gamma$ formulas of Dilks and, independently,
Krattenthaler--Wachs \cite[Chapter~4]{DilksThesis}.
For every integer $s\ge0$, the super $q$-Narayana polynomial of
Krattenthaler--Wachs is exactly
$\qbinom{2s}sP_{n-s}^{(s)}(q^2t;q)$.
Our coefficient evaluation therefore gives an alternative proof of
their full product formula \cite[slides~57--59]{WachsSlides};
a finite constant-term representation proves its integrality.
The formula, its attribution, and the failure of coefficientwise
positivity for general $s$ are discussed in
Section~\ref{subsec:super-Narayana}. For subsequent work on this
family, see Rodelet--Causse and Tevlin \cite{RodeletTevlin}.
At $q=1$, the same identification shows that their super Narayana
sequence is a constant multiple of our classical moment sequence.
Its coefficientwise Hankel total positivity therefore confirms
and strengthens their Conjecture~3 on $t$-log-convexity; see
Remark~\ref{rem:super-classical-HTP} and the direct classical proof
in the companion paper~\cite{ZengNarayanaRogers}.
Expanding the same integrand in ordinary powers of $x$ instead gives
Pan's Touchard-type identity for $q$-Narayana numbers
\cite[Theorem~1.1]{PanTouchard}, with ordinary Catalan coefficients.
We obtain a continuous-parameter extension and explain how these two
Touchard-type identities arise from two bases for the same
semicircle functional; see Section~\ref{subsec:Pan}.
This second deformation also supplies a two-parameter extension
of Wang--Yang's total-positivity theorems for the type~$A$ and
$B$ Narayana matrices~\cite{WangYang}. More precisely, for every
fixed $0<q<1$ and $\theta>-1$, the lower-triangular matrix
$[K_{n,k}^{(\theta)}(q)]_{n,k\ge0}$ is totally positive, and its
Narayana square $[K_{n+k,k}^{(\theta)}(q)]_{n,k\ge0}$ is strictly
totally positive. A Jackson $q$-Bessel--Edrei factorization proves
the triangle result, while a discrete Hausdorff moment measure proves
the square result. The specializations $\theta=0,1$ give fixed-$q$
counterparts of the type~$B$ and shifted type~$A$ results;
letting $q\uparrow1$ recovers the classical continuous $\theta$
interpolation. These are assertions of total positivity for each
fixed real $q$, not coefficientwise total positivity in $q$.
Zhu and Sun~\cite{ZhuSun} studied strong $q$-log-convexity-preserving
Narayana transformations; this is a different positivity property from
the fixed-$q$ matrix total positivity proved here.
At the polynomial endpoints we prove coefficientwise total
nonnegativity for all minors of order at most two, with an all-orders
conjecture. Connections with $q$-Hoggatt numbers, Schur positivity,
and the Heine continued fraction are developed separately.

\paragraph{The link between the two deformations.}
In addition to their common classical limit, the constructions are
related by a triangular change of polynomial basis. With
$y=1+t+2\sqrt t\,x$, each Askey--Wilson factor is
$1+2q^j\sqrt t\,x+q^{2j}t
=q^jy+(1-q^j)(1-q^jt)$.
Section~\ref{sec:two-bases} gives the resulting explicit
moment identity; it is a structural relationship, not an assertion
that all its change-of-basis coefficients are positive.

We begin with notation, the classical interpolation, and a general
continuant-comparison criterion, together with its consequences for
production matrices and Hankel total positivity. Section~\ref{sec:affine-Stieltjes}
applies these tools to affine Rogers moments and their associated Jacobi
tails. The following sections determine the extreme coefficients of
arbitrary Hankel minors, describe the corresponding continued fractions,
and establish an exact triangular relation between the two $q$-deformations.
We then develop the Askey--Wilson-basis deformation, its analytic and
combinatorial properties, the total positivity of the associated Narayana
matrices, and further questions.


Throughout, $0<q<1$.  We use
\[
 (a;q)_n=\prod_{j=0}^{n-1}(1-aq^j),
 \qquad
 (a_1,\ldots,a_r;q)_n=\prod_{i=1}^r(a_i;q)_n,
\]
and
\[
 \qbinom{n}{k}=\frac{(q;q)_n}{(q;q)_k(q;q)_{n-k}}.
\]

Define
\begin{equation}\label{eq:Pclassical}
 P_n^{(\theta)}(t)
 =\sum_{k=0}^n K_{n,k}^{(\theta)}t^k,
\end{equation}
with $K_{n,k}^{(\theta)}$ given by~\eqref{eq:classical-kernel}.
Then
\[
 P_n^{(0)}(t)=\sum_{k=0}^n\binom{n}{k}^2t^k
\]
is the type $B$ Narayana polynomial, while
\[
 P_n^{(1)}(t)
 =\sum_{k=0}^n
 \frac{n+1}{(k+1)(n-k+1)}\binom{n}{k}^2t^k
\]
is the shifted ordinary Narayana polynomial.

The family~\eqref{eq:Pclassical} has an ultraspherical moment interpretation.  This suggests replacing the Gegenbauer measure by the continuous $q$-ultraspherical measure.  The next sections explain both ways of continuing the classical moment formula.

\subsection{Continuants and production matrices for affine moments}
\label{ar:sec:criterion}
Production matrices and continued fractions provide a systematic way
to prove coefficientwise Hankel total positivity; see, for example,
P\'etr\'eolle--Sokal--Zhu~\cite{PSZ} and Zhu~\cite{Zhu2024}.
Related applications of polynomial production matrices appear in
Chen--Sokal~\cite{ChenSokal2024}. We record the continuant criterion
needed in the present setting.
Write $F(t)\succeq_t0$ when $F\in\mathbb R_{\ge0}[t]$. For
$n\ge1$ put
\begin{equation}\label{ar:eq:continuant}
 C_n(b_1,\ldots,b_{n-1};t)=\det\begin{pmatrix}
 1+t&1&&\\tb_1&1+t&1&\\&\ddots&\ddots&\ddots\\&&tb_{n-1}&1+t
 \end{pmatrix},\qquad C_0=1.
\end{equation}
\begin{lemma}[Continuant comparison]\label{ar:thm:criterion}
If $a_1\ge a_2\ge\cdots\ge0$ and all leading continuants
$C_n(a_1,\ldots,a_{n-1};t)$ are coefficientwise nonnegative,
then so are all $C_n(b_1,\ldots,b_{n-1};t)$ whenever
$0\le b_i\le a_i$. Consequently the tridiagonal matrix
\begin{equation}\label{ar:eq:Jb}
J_b(t)=\begin{pmatrix}
1+t&1&&\\tb_1&1+t&1&\\&tb_2&1+t&\ddots\\&&\ddots&\ddots
\end{pmatrix}
\end{equation}
is coefficientwise totally positive.
\end{lemma}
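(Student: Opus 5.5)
The plan is to prove a slightly stronger statement by induction on $n$: \emph{for every $m\le n$ and every sequence $c=(c_1,c_2,\ldots)$ with $0\le c_i\le a_i$ for all $i$, the continuant $C_m(c_1,\ldots,c_{m-1};t)$ is coefficientwise nonnegative.} The only tool is the three-term recurrence obtained by expanding the determinant \eqref{ar:eq:continuant} along its last row,
\[
C_m(c;t)=(1+t)\,C_{m-1}(c;t)-t\,c_{m-1}\,C_{m-2}(c;t),
\]
together with the multilinearity of $C_m$ in the variables $c_1,\ldots,c_{m-1}$.

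\textbf{Derivative formula.} Since $c_i$ appears in exactly one entry of the matrix, $C_m$ is affine in each $c_i$. Deleting the $2\times2$ block through rows and columns $i,i+1$ gives
\[
\frac{\partial C_m}{\partial c_i}
=-t\,C_{i-1}(c_1,\ldots,c_{i-2};t)\,C_{m-i-1}(c_{i+1},\ldots,c_{m-2};t).
\]
Here the second factor is the continuant of the \emph{shifted} sequence $(c_{i+1},c_{i+2},\ldots)$.

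\textbf{Moving from $a$ to $b$.} Pass from $a$ to $b$ one coordinate at a time, replacing $a_i$ by $b_i$ for $i=1,\ldots,m-1$. Each intermediate sequence $c$ satisfies $0\le c\le a$. By affineness, each step changes $C_m$ by
\[
(a_i-b_i)\,t\,C_{i-1}(\text{prefix of }c)\,C_{m-i-1}(\text{tail of }c).
\]
The prefix continuant has order $<m$ and entries $\le a$, so it is nonnegative by induction. For the tail, note that $c_{i+j}\le a_{i+j}\le a_j$, using the hypothesis that $a$ is nonincreasing. Hence the shifted tail is again dominated by $a$ termwise and has order $<m$, so its continuant is nonnegative by induction. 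Therefore
\[
C_m(b)=C_m(a)+\sum_i(\text{nonnegative terms})\succeq_t 0,
\]
since $C_m(a)\succeq_t0$ by hypothesis.

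This tail step is the crux, and it is exactly where monotonicity of $a$ is needed: the derivative produces continuants of shifted sequences, and these must be controlled by the leading continuants.

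\textbf{Total positivity of $J_b$.} Use the standard structure of minors of a tridiagonal matrix. Any minor with row set $I$ and column set $J$ is either zero, or factors (after splitting at the places where $I$ and $J$ separate) into a product of
\begin{itemize}[nosep]
\item off-diagonal entries $1$ or $tb_i$, and
\item contiguous principal minors $\det J_b[j..j+\ell]$.
\end{itemize}
This follows by induction: a row or column with a single nonzero entry inside the submatrix is expanded along it. Each contiguous principal minor equals $C_{\ell+1}(b_j,\ldots,b_{j+\ell-1};t)$, the continuant of a shifted subsequence of $b$. As in the tail step, $b_{j-1+k}\le a_{j-1+k}\le a_k$, so the first part of the proof shows it is coefficientwise nonnegative. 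All other factors are in $\mathbb R_{\ge0}[t]$, so every minor of $J_b(t)$ is coefficientwise nonnegative.

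I expect the only delicate point to be the careful bookkeeping of this factorization of tridiagonal minors. If it proves awkward, I would instead cite the classical fact that a tridiagonal matrix with nonnegative off-diagonal entries is totally nonnegative as soon as all its contiguous principal minors are nonnegative, since that argument is purely combinatorial and carries over coefficientwise.
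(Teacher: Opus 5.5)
Your proposal is correct and follows the paper's proof: you use the multi-affine derivative formula, telescope coordinate by coordinate from $a$ to $b$, use monotonicity of $a$ to dominate the shifted tail continuants inside a simultaneous induction over the box $0\le c\le a$, and finish with the contiguous-principal-minor criterion for tridiagonal matrices. (The paper replaces $a_{n-1},\ldots,a_1$ in decreasing order, so its prefix factor is a leading $a$-continuant and its tail a shifted $b$-continuant; your increasing order swaps these roles, and both work.) One indexing slip: deleting rows and columns $i,i+1$ leaves the tail $C_{m-i-1}(c_{i+2},\ldots,c_{m-1};t)$, not $C_{m-i-1}(c_{i+1},\ldots,c_{m-2};t)$; this is still a forward shift with $c_{i+1+j}\le a_{i+1+j}\le a_j$, so your argument is unaffected.
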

\begin{proof}
A matching expansion of the tridiagonal determinant shows it is
multi-affine in the $b_i$ and its derivative with respect to $b_i$
is $-t$ times the product of the two continuants obtained on
deleting the endpoints of edge $i$. Replacing $a_{n-1},\ldots,a_1$
by $b_{n-1},\ldots,b_1$ in this order gives
\begin{align}\label{ar:eq:telescoping}
C_n(b;t)=C_n(a;t)+t\sum_{i=1}^{n-1}(a_i-b_i)
C_{i-1}(a_1,\ldots,a_{i-2};t)
C_{n-i-1}(b_{i+2},\ldots,b_{n-1};t).
\end{align}
The suffix weights satisfy $b_{i+1+j}\le a_{i+1+j}\le a_j$,
so simultaneous induction on the size of every box
$0\le b_j\le a_j$ proves nonnegativity. Each shifted contiguous
principal minor of $J_b$ is one of these continuants, and all entries
are nonnegative. The tridiagonal minor criterion therefore proves
total positivity; see \cite{DDPS} for that criterion and the companion
classical paper~\cite{ZengNarayanaRogers} for further detail.
\end{proof}
\begin{corollary}\label{ar:cor:one}
If $0\le b_i\le1$ for every $i$, then $J_b(t)$ is coefficientwise
totally positive.
\end{corollary}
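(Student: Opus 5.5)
The plan is to apply Lemma~\ref{ar:thm:criterion} with the constant comparison sequence $a_i=1$ for all $i$. This sequence is trivially nonincreasing and nonnegative. The hypothesis $0\le b_i\le1$ is exactly $0\le b_i\le a_i$. So the only thing left to check is that the leading continuants $C_n(1,\ldots,1;t)$ are coefficientwise nonnegative for every $n\ge0$.

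To verify this, I will expand the tridiagonal determinant \eqref{ar:eq:continuant} along its last row. This gives the three-term recurrence
\[
C_n(b_1,\ldots,b_{n-1};t)=(1+t)\,C_{n-1}(b_1,\ldots,b_{n-2};t)-t\,b_{n-1}\,C_{n-2}(b_1,\ldots,b_{n-3};t),
\]
with $C_0=1$ and $C_1=1+t$.

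Specializing to $b_i=1$, the characteristic polynomial $\lambda^2-(1+t)\lambda+t$ factors as $(\lambda-1)(\lambda-t)$. This suggests the closed form
\[
C_n(1,\ldots,1;t)=1+t+\cdots+t^n=\frac{1-t^{n+1}}{1-t}.
\]
I will confirm it by induction. The cases $n=0,1$ are immediate. For the inductive step,
\[
(1+t)\sum_{j=0}^{n-1}t^j-t\sum_{j=0}^{n-2}t^j=\sum_{j=0}^{n}t^j .
\]
Every coefficient is $1$, so these continuants lie in $\mathbb R_{\ge0}[t]$.

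With this, the hypotheses of Lemma~\ref{ar:thm:criterion} are satisfied. The lemma then yields coefficientwise nonnegativity of all $C_n(b;t)$, and hence coefficientwise total positivity of $J_b(t)$. There is no real obstacle here. The one point needing care is getting the sign and the placement of $b_{n-1}$ right in the recurrence, which must match the matrix convention (superdiagonal $1$, subdiagonal $tb_i$).
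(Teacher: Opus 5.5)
Your proposal is correct and matches the paper's proof: apply Lemma~\ref{ar:thm:criterion} with the constant comparison sequence $a_i=1$, and observe that $C_n(1,\ldots,1;t)=1+t+\cdots+t^n$ has nonnegative coefficients. Your induction via the three-term recurrence $C_n=(1+t)C_{n-1}-tb_{n-1}C_{n-2}$ simply spells out the closed form that the paper states without proof.
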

\begin{proof}
For $a_i=1$, $C_n(a;t)=1+t+\cdots+t^n$.
\end{proof}
\begin{proposition}\label{ar:prop:transfer}
Suppose $b_i\ge0$ and $J_b(t)$ is coefficientwise totally positive. Set
\[
 O_{nk}=(J_b(t)^n)_{0k},\qquad B_n=(J_b(t)^n)_{00}.
\]
Then $O$ and the Hankel matrix $(B_{i+j})_{i,j\ge0}$ are coefficientwise totally positive.
\end{proposition}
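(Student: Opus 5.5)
This is the standard production-matrix argument of Pétréolle--Sokal--Zhu~\cite{PSZ}, transplanted to the coefficientwise setting in $t$. The plan is to build the output matrix row by row from $J_b(t)$ and read off both claims.

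\textbf{Step 1: a factorization of $O$.} Let $O$ be the lower-triangular matrix with rows $O_{n,\cdot}=e_0^{\mathsf T}J_b(t)^n$. Because $J_b$ is tridiagonal with superdiagonal entries $1$, the entry $O_{nk}$ vanishes for $k>n$ and $O_{nn}=1$. By construction $O_{n+1,\cdot}=O_{n,\cdot}J_b$. Write $\bar O$ for the matrix with rows $O_{1,\cdot},O_{2,\cdot},\dots$, i.e.\ $O$ with its zeroth row deleted. Then
\[
\bar O=O\,J_b .
\]
This gives the recursive block structure
\[
O=\begin{pmatrix}1&0\\0&O\end{pmatrix}\begin{pmatrix}1&0\\0&J_b\end{pmatrix}\cdots,
\]
and more concretely, for each $N$ the leading $N\times N$ block of $O$ is determined by the leading $(N-1)\times(N-1)$ block of $O$ times a finite block of $J_b$.

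\textbf{Step 2: total positivity of $O$ by induction on $N$.} We show that every minor of the first $N$ rows of $O$ is coefficientwise nonnegative. Take a minor using rows $i_1<\dots<i_r$. If $i_1=0$, expand along that row: it is $(1,0,0,\dots)$, so the minor reduces to one using rows $\ge1$ and columns $\ge1$. Every such minor is a minor of $\bar O=OJ_b$ restricted to finitely many rows, and those rows of $O$ have indices $<N$. By Cauchy--Binet it is a finite sum of products of minors of the first $N-1$ rows of $O$ (nonnegative by induction) with minors of $J_b$ (nonnegative by hypothesis). The sum is finite because rows of $O$ have finite support. Products and sums of elements of $\mathbb R_{\ge0}[t]$ stay in $\mathbb R_{\ge0}[t]$, so the minor is $\succeq_t0$. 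This gives coefficientwise total positivity of $O$.

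\textbf{Step 3: the Hankel matrix.} Let $B_n=(J_b^n)_{00}$. We need a factorization $(B_{i+j})=ODO^{\mathsf T}$ with $D$ diagonal and $D$ coefficientwise nonnegative. Since $J_b$ is not symmetric, take $D=\diag(d_0,d_1,\dots)$ with
\[
d_0=1,\qquad d_k=t^k b_1\cdots b_k .
\]
Then $J_bD=DJ_b^{\mathsf T}$. To check this, compare the $(k,k+1)$ and $(k+1,k)$ entries: $1\cdot d_{k+1}=tb_{k+1}d_k$ holds by the definition of $d_k$.

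It follows that $J_b^{\,j}D=D(J_b^{\mathsf T})^{j}$. Hence
\[
B_{i+j}=e_0^{\mathsf T}J_b^{\,i}J_b^{\,j}e_0=e_0^{\mathsf T}J_b^{\,i}\,D\,(J_b^{\mathsf T})^{j}e_0=(ODO^{\mathsf T})_{ij},
\]
where the middle equality uses $d_0=1$, so $e_0=De_0$. Since $b_i\ge0$, each $d_k\succeq_t0$. Applying Cauchy--Binet to $ODO^{\mathsf T}$, each Hankel minor is a sum of products of a minor of $O$, a principal minor of $D$ (a product of some $d_k$'s), and a minor of $O^{\mathsf T}$. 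All of these are coefficientwise nonnegative, and the sums are finite because $O$ is lower triangular.

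\textbf{Main obstacle.} The main obstacle is bookkeeping in Step 2: making precise that a minor of $O$ involving row $0$ reduces correctly, and that Cauchy--Binet only involves finitely many indices. Lower triangularity of $O$ handles the finiteness. The rest is formal.
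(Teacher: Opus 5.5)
Your proposal is correct and is essentially the paper's proof. Expanding a minor of $O$ along row $0$ and then applying Cauchy--Binet to $\bar O=OJ_b$ is what the paper packages as $O_{N+1}=\diag(1,O_N)T_N$ with the bordered matrix $T_N$, and your identity $J_bD=DJ_b^{\mathsf T}$ is a matrix form of the paper's path-reversal relation $(J_b^n)_{k0}=d_k(J_b^n)_{0k}$, giving the same factorization $H=ODO^{\mathsf T}$.

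One fix is needed. The block display in Step 1 is false as written: it would force $O_{10}=0$ instead of $1+t$. The correct second factor is the matrix whose first row is $(1,0,0,\ldots)$ followed by the rows of $J_b$. Step 2 never uses that display, so the argument itself is unaffected.
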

\begin{proof}
Write $O_N$ for the square output array with row and column indices $0,\ldots,N$. Let $T_N$ be the $(N+2)\times(N+2)$ matrix whose first row is $(1,0,\ldots,0)$ and whose remaining rows are the first $N+1$ rows of $J_b$, restricted to columns $0,\ldots,N+1$. Every minor of $T_N$ either is a minor of $J_b$, is zero, or expands along the first row into a minor of $J_b$ with positive sign. Thus $T_N$ is totally positive. The output recurrence gives
\[
 O_{N+1}=\diag(1,O_N)T_N.
\]
Induction and Cauchy--Binet show that $O_N$ is totally positive for all $N$.

Let $d_0=1$ and $d_k=t^k\prod_{j=1}^k b_j$. Reversal of tridiagonal paths gives
$(J_b^n)_{k0}=d_k(J_b^n)_{0k}$. This identity holds even when some $b_j$ vanish, since it can be verified term by term without division. Matrix multiplication therefore yields
\begin{equation}\label{ar:eq:factorH}
 B_{i+j}=\sum_{k\ge0}O_{ik}d_kO_{jk},\qquad
 H=O\diag(d_0,d_1,\ldots)O^{\mathsf T}.
\end{equation}
The sums relevant to any finite minor are finite: $O_{ik}=0$ for $k>i$. Another application of Cauchy--Binet proves the Hankel assertion.
\end{proof}

\section{Affine Rogers moments and their Hankel total positivity}
\label{sec:affine-Stieltjes}
For a fixed base $0<q<1$ and $-1<\beta<1$, let $\nu_{\beta,q}$
be the normalized Rogers probability measure and put
\begin{equation}\label{ar:eq:moments}
 A_n(t;\beta,q)=\int_{-1}^{1}(1+t+2\sqrt t\,x)^n
 \,d\nu_{\beta,q}(x).
\end{equation}
All coefficientwise comparisons in this part concern $t$ with
$q,\beta$ fixed. We use $f\succeq_t g$ for
$f-g\in\mathbb R_{\ge0}[t]$.
\begin{theorem}\label{ar:thm:main}
For $0<q<1$ and $-1<\beta<1$, the affine Rogers moment sequence
$\big(A_n(t;\beta,q)\big)_{n\ge0}$ is coefficientwise
Hankel-totally positive. The same conclusion holds for the moment
sequences generated by all associated Jacobi tails. These assertions
remain valid in the limit $\beta\uparrow1$.
\end{theorem}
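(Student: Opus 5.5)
The plan is to realise $A_n(t;\beta,q)$ as the $(0,0)$ entry of powers of a tridiagonal production matrix of the form \eqref{ar:eq:Jb}. Then Proposition~\ref{ar:prop:transfer} reduces Hankel total positivity to total positivity of $J_b(t)$, and that is decided by Lemma~\ref{ar:thm:criterion} or Corollary~\ref{ar:cor:one}.

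\emph{Step 1: the production matrix.} Normalise the Rogers polynomials $C_n(x;\beta|q)$ to monic form $p_n$, so that $xp_n=p_{n+1}+\lambda_np_{n-1}$ with
\[
 \lambda_n=\frac{(1-q^n)(1-\beta^2q^{n-1})}{4(1-\beta q^{n-1})(1-\beta q^n)} .
\]
For the variable $y=1+t+2\sqrt t\,x$, the orthonormal-basis Jacobi matrix has diagonal $1+t$ and off-diagonal products $4t\lambda_n$. After diagonal rescaling this is exactly $J_b(t)$ with $b_n=4\lambda_n\ge0$. Hence $A_n=(J_b^n)_{00}$. The moments of the $m$-th associated Jacobi tail are obtained in the same way from the shifted sequence $(b_{n+m})_{n\ge1}$.

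\emph{Step 2: the case $\beta\le q$.} Put $u=q^{n-1}$. A direct computation gives
\[
 (1-\beta u)(1-\beta q u)-(1-qu)(1-\beta^2u)=u(q-\beta)(1-\beta).
\]
So $0\le b_n\le1$ for all $n$ whenever $-1<\beta\le q$. Every tail then also has weights in $[0,1]$. Corollary~\ref{ar:cor:one} and Proposition~\ref{ar:prop:transfer} finish this case.

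\emph{Step 3: the case $q<\beta<1$.} Here $b_n>1$, and I would apply Lemma~\ref{ar:thm:criterion} with $a=b$ itself. This needs two things.

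First, $b_n$ must be nonincreasing in $n$. This should follow from an elementary inspection of the rational function of $u=q^{n-1}$: one checks that $u\mapsto b$ is nondecreasing on $(0,1]$ when $\beta>q$. Monotonicity also shows that each shifted tail $(b_{n+m})$ satisfies $b_{n+m}\le b_n$, so the tails are covered by the same lemma.

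Second, the leading continuants $C_n(b_1,\dots,b_{n-1};t)$ must be coefficientwise nonnegative. This is the main obstacle, and I would attack it by an explicit evaluation. The determinant of the $n\times n$ section of $J_b$ equals, up to the positive factor $(2\sqrt t)^n$ and the sign $(-1)^n$, the monic Rogers polynomial $p_n$ evaluated at $x_0=-(1+t)/(2\sqrt t)$. Writing $x_0=(z+z^{-1})/2$ with $z=-\sqrt t$ and using the Rogers generating function
\[
 C_n(x;\beta|q)=\sum_{k=0}^n\frac{(\beta;q)_k(\beta;q)_{n-k}}{(q;q)_k(q;q)_{n-k}}z^{n-2k},
\]
the signs cancel. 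The result should be
\[
 C_n(b;t)=c_n\sum_{k=0}^n\frac{(\beta;q)_k(\beta;q)_{n-k}}{(q;q)_k(q;q)_{n-k}}\,t^k,
\]
with $c_n>0$. For $\beta>q>0$ all $(\beta;q)_k>0$, so this is coefficientwise nonnegative. Lemma~\ref{ar:thm:criterion} then gives total positivity of $J_b(t)$ and of every tail matrix, and Proposition~\ref{ar:prop:transfer} gives the Hankel statement. The care needed here is in the normalising constants: one must confirm that $c_n$ is a product of the positive factors $(q;q)_n/(\beta;q)_n$ and powers of $2$, with no stray sign.

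\emph{Step 4: the limit $\beta\uparrow1$.} As $\beta\uparrow1$ one has $b_n\to1$, and the moments and minors converge coefficientwise. Nonnegativity of the polynomial coefficients is closed under limits. Alternatively, the limiting weights $b_n=1$ fall directly under Corollary~\ref{ar:cor:one}. Either way the limiting sequence and all its tails remain coefficientwise Hankel-totally positive.
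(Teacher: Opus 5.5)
Your proposal is essentially the paper's proof. It uses the same production matrix $J_r(t)$ (your $b_n=4\lambda_n$ is the paper's $r_n$) and the same split at $\beta=q$ via the identity for $r_n-1$. It also has the same monotonicity in $u=q^{n-1}$ and the same Rogers evaluation of the leading continuants, with $c_n=(q;q)_n/(\beta;q)_n$. The paper treats tails simply as submatrices of a totally positive matrix, but your comparison with $a=b$ works too.

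One correction in Step~4: as $\beta\uparrow1$ you get $b_1=(1-q)(1+\beta)/(1-\beta q)\to2$, not $1$; only $b_n\to1$ for $n\ge2$. Your alternative appeal to Corollary~\ref{ar:cor:one} therefore fails for the principal sequence, though it does cover the tails with $\ell\ge1$. Your first argument, coefficientwise continuity of each fixed minor in finitely many $b_n$, is exactly the paper's and suffices on its own.
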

For $n\ge0$, write $(a;q)_n=\prod_{j=0}^{n-1}(1-aq^j)$, with $(a;q)_0=1$, and use the analogous infinite product notation. A convenient normalization of the Rogers measure is
\begin{equation}\label{ar:eq:measure}
 \int f(x)\,d\nu_{\beta,q}(x)
 =\frac1{Z_{\beta,q}}\int_0^\pi f(\cos\phi)
 \left|\frac{(e^{2i\phi};q)_\infty}
 {(\beta e^{2i\phi};q)_\infty}\right|^2d\phi,
\end{equation}
where $Z_{\beta,q}$ is the integral of the weight. This is a symmetric probability measure with positive density in the interior of its support. The standard orthogonality and recurrence formulas are given in \cite[Section 14.10]{KoekoekLeskySwarttouw} and \cite[Section 18.28(v)]{DLMF}.

The finite Laurent expansion is
\begin{equation}\label{ar:eq:rogers}
 C_n(\cos\phi;\beta\mid q)=\sum_{k=0}^n
 \frac{(\beta;q)_k(\beta;q)_{n-k}}
 {(q;q)_k(q;q)_{n-k}}e^{i(n-2k)\phi}.
\end{equation}
These polynomials satisfy
\[
 2x(1-\beta q^n)C_n(x)
 =(1-q^{n+1})C_{n+1}(x)+(1-\beta^2q^{n-1})C_{n-1}(x).
\]
At $n=0$ the last term is absent. Define the recurrence coefficients
\begin{equation}\label{ar:eq:r}
 r_n=\frac{(1-q^n)(1-\beta^2q^{n-1})}
 {(1-\beta q^{n-1})(1-\beta q^n)}\qquad(n\ge1).
\end{equation}
Then the monic polynomials in $y=2x$,
\[
 p_n(y)=\frac{(q;q)_n}{(\beta;q)_n}C_n(y/2;\beta\mid q),
\]
satisfy $p_0=1$, $p_1=y$ and
\begin{equation}\label{ar:eq:monic}
 p_{n+1}(y)=yp_n(y)-r_np_{n-1}(y),\qquad n\ge1,
\end{equation}
All $r_n$ are positive in our parameter range.

Under the affine change in \eqref{ar:eq:moments}, the monic orthogonal polynomials become
\[
 Q_n(y)=t^{n/2}p_n\left(\frac{y-1-t}{\sqrt t}\right).
\]
Initially this formula is interpreted for $t>0$; the recurrence extends it polynomially to $t=0$. We obtain
\begin{equation}\label{ar:eq:Q}
 Q_{n+1}(y)=(y-1-t)Q_n(y)-tr_nQ_{n-1}(y).
\end{equation}
Consequently the Jacobi production matrix is $J_r(t)$, and
\begin{equation}\label{ar:eq:momentmatrix}
 A_n(t;\beta,q)=(J_r(t)^n)_{00}.
\end{equation}
For example, this identity follows by expanding multiplication by $y$ in the monic orthogonal basis and taking its coefficient along $Q_0$.

\begin{lemma}\label{ar:lem:E}
The leading continuants of $J_r(t)$ are
\begin{equation}\label{ar:eq:E}
 E_n(t)=\frac{(q;q)_n}{(\beta;q)_n}
 \sum_{k=0}^n\frac{(\beta;q)_k(\beta;q)_{n-k}}
 {(q;q)_k(q;q)_{n-k}}t^k.
\end{equation}
In particular, every coefficient of $E_n$ is strictly positive.
\end{lemma}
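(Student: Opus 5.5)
The plan is to identify the leading continuant $E_n(t)=C_n(r_1,\ldots,r_{n-1};t)$ with the characteristic polynomial of the affine Jacobi matrix evaluated at $y=0$, up to sign. Expanding the tridiagonal determinant along its last row gives the three-term recurrence
\[
E_{n+1}=(1+t)E_n-tr_nE_{n-1},\qquad E_0=1,\quad E_1=1+t.
\]
Comparing with \eqref{ar:eq:Q}, this is exactly $E_n=(-1)^nQ_n(0)$. Hence
\[
E_n(t)=(-1)^nt^{n/2}p_n\!\left(\frac{-(1+t)}{\sqrt t}\right).
\]
Since $p_n$ has parity $(-1)^n$, this equals $t^{n/2}p_n\bigl((1+t)/\sqrt t\bigr)$.

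Next we evaluate $p_n$ at this point using the Laurent expansion \eqref{ar:eq:rogers}. Put $e^{i\phi}=\sqrt t$ formally, so that $2\cos\phi=\sqrt t+1/\sqrt t=(1+t)/\sqrt t$. Then \eqref{ar:eq:rogers} gives
\[
C_n\!\left(\tfrac{1+t}{2\sqrt t};\beta\mid q\right)=\sum_{k=0}^n\frac{(\beta;q)_k(\beta;q)_{n-k}}{(q;q)_k(q;q)_{n-k}}\,t^{(n-2k)/2}.
\]
Multiplying by $t^{n/2}$ and by the normalization factor $(q;q)_n/(\beta;q)_n$ yields a sum over $k$ of the stated coefficients times $t^{n-k}$. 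Reindexing $k\mapsto n-k$ leaves the summand unchanged, because it is symmetric in $k$ and $n-k$, and so produces \eqref{ar:eq:E}.

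This substitution is legitimate because \eqref{ar:eq:rogers} is an identity of Laurent polynomials in $z=e^{i\phi}$ with $2x=z+z^{-1}$. One can therefore replace $z$ by $\sqrt t$ for $t>0$, and both sides become polynomial identities in $t$. As a cleaner alternative, I would verify directly that the right side of \eqref{ar:eq:E} satisfies the same recurrence. That would amount to the $q$-ultraspherical recurrence written in $z$, so the first route suffices.

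For strict positivity, note that $0<q<1$ and $-1<\beta<1$ give $1-\beta q^j>0$ and $1-q^j>0$. Hence all $(\beta;q)_k$ and $(q;q)_k$ are positive, and every coefficient is strictly positive.

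The only delicate point is bookkeeping: checking the sign and parity in $E_n=(-1)^nQ_n(0)$, and that the initial terms match, namely $E_1=1+t$ against $p_1=y$. I expect no real obstacle.
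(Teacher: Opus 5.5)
Your proposal is correct and is essentially the paper's argument: both identify the leading continuant with $t^{n/2}p_n\bigl((1+t)/\sqrt t\bigr)$ by matching the recurrence $E_{n+1}=(1+t)E_n-tr_nE_{n-1}$, then evaluate it through the Laurent expansion \eqref{ar:eq:rogers} at $z=\sqrt t$, and read off positivity from the $q$-shifted factorials. The only difference is your detour through $(-1)^nQ_n(0)$ and the parity of $p_n$; the paper avoids it by noting that rescaling \eqref{ar:eq:monic} already shows $t^{n/2}p_n\bigl((1+t)/\sqrt t\bigr)$ satisfies the continuant recurrence.
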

\begin{proof}
Substitute the Laurent variable $t^{1/2}$ into \eqref{ar:eq:rogers} and multiply by $t^{n/2}(q;q)_n/(\beta;q)_n$. This gives \eqref{ar:eq:E}, or equivalently
\[
 E_n(t)=t^{n/2}\frac{(q;q)_n}{(\beta;q)_n}
 C_n\left(\frac{1+t}{2\sqrt t};\beta\mid q\right).
\]
The recurrence \eqref{ar:eq:monic} gives
\begin{equation}\label{ar:eq:Erec}
 E_0=1,\quad E_1=1+t,\quad
 E_{n+1}=(1+t)E_n-tr_nE_{n-1}.
\end{equation}
This is the leading-continuant recurrence. Finally $(\beta;q)_j>0$ and $(q;q)_j>0$ for all $j$ in the stated range, including negative $\beta$.
\end{proof}

\begin{proof}[Proof of Theorem~\ref{ar:thm:main}]
A direct simplification gives
\begin{equation}\label{ar:eq:rminus}
 r_n-1=\frac{(\beta-q)(1-\beta)q^{n-1}}
 {(1-\beta q^{n-1})(1-\beta q^n)}.
\end{equation}
If $\beta\le q$, then $0<r_n\le1$; Corollary~\ref{ar:cor:one} applies. If $q<\beta<1$, write the right-hand side of \eqref{ar:eq:rminus} as $(\beta-q)(1-\beta)f(q^{n-1})$, where
\[
 f(x)=\frac{x}{(1-\beta x)(1-\beta qx)}.
\]
For $0<x\le1$, this function is increasing: it is the product of three positive increasing functions. Hence $r_1\ge r_2\ge\cdots>1$. Lemma~\ref{ar:lem:E} verifies the leading hypothesis of Lemma~\ref{ar:thm:criterion}. In both regimes $J_r(t)$ is coefficientwise totally positive, and Proposition~\ref{ar:prop:transfer} proves the Hankel assertion.

Every associated tail is a submatrix of $J_r(t)$ and is therefore totally positive. Applying Proposition~\ref{ar:prop:transfer} to that tail proves the associated assertion. Finally, as $\beta\uparrow1$, $r_1\to2$ and $r_n\to1$ for $n\ge2$. Each fixed moment and each fixed minor has a finite polynomial expression in finitely many $r_n$. Coefficientwise nonnegativity is preserved under this limit.
\end{proof}

\begin{corollary}[Positive recursion for shifted Rogers continuants]
\label{ar:cor:shifted-recursion}
For $s,n\ge0$, put
\[
 C_{s,n}(t)=C_n(r_{s+1},\ldots,r_{s+n-1};t).
\]
The initial values are
$C_{s,0}=1$ and $C_{s,1}=1+t$. If $q<\beta<1$, then
\begin{equation}\label{ar:eq:shifted-recursion}
 C_{s,n}(t)=E_n(t)+t\sum_{i=1}^{n-1}
 (r_i-r_{s+i})E_{i-1}(t)C_{s+i+1,n-i-1}(t).
\end{equation}
If $-1<\beta\le q$, put $U_j(t)=1+t+\cdots+t^j$. Then
\begin{equation}\label{ar:eq:shifted-recursion-one}
 C_{s,n}(t)=U_n(t)+t\sum_{i=1}^{n-1}
 (1-r_{s+i})U_{i-1}(t)C_{s+i+1,n-i-1}(t).
\end{equation}
For $n=0,1$ the sums are empty. Both recursions express the shifted
continuants through polynomials with nonnegative coefficients and
nonnegative scalar weights.
\end{corollary}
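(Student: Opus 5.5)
The plan is to specialize the telescoping identity \eqref{ar:eq:telescoping} from the proof of Lemma~\ref{ar:thm:criterion}. I first re-derive that identity in a form that holds for arbitrary real weights. The continuant $C_n(b;t)$ is multi-affine in $(b_1,\ldots,b_{n-1})$. Expanding along the single off-diagonal pair at position $i$ (the matching expansion) gives
\[
 C_n(b;t)=C_n(b|_{b_i=0};t)-tb_i\,C_{i-1}(b_1,\ldots,b_{i-2};t)\,C_{n-i-1}(b_{i+2},\ldots,b_{n-1};t).
\]
Here $C_{i-1}$ uses the first $i-1$ rows and $C_{n-i-1}$ the last $n-i-1$ rows. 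So changing only $b_i$ from $a_i$ to $b_i$ changes the determinant by
\[
 t(a_i-b_i)\,C_{i-1}(\text{prefix};t)\,C_{n-i-1}(\text{suffix};t).
\]

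Next I replace the entries one at a time, starting from $a$ and ending at $b$. I choose the order $i=n-1,n-2,\ldots,1$. At the step for index $i$, the prefix weights $b_1,\ldots,b_{i-2}$ are still the original $a_1,\ldots,a_{i-2}$. The suffix weights $b_{i+2},\ldots,b_{n-1}$ have already been replaced. Summing the increments gives exactly \eqref{ar:eq:telescoping}. This is a purely algebraic identity, valid for all real $a_i,b_i$, without any sign hypotheses.

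Now specialize. For \eqref{ar:eq:shifted-recursion}, take $a_i=r_i$ and $b_i=r_{s+i}$. Then $C_n(a;t)=E_n(t)$ by Lemma~\ref{ar:lem:E}, and the prefix factor is $E_{i-1}(t)$. The suffix factor is
\[
 C_{n-i-1}(r_{s+i+2},\ldots,r_{s+n-1};t)=C_{s+i+1,n-i-1}(t),
\]
since its first weight has index $(s+i+1)+1$. This is the claimed formula. For \eqref{ar:eq:shifted-recursion-one}, take instead $a_i=1$. Then $C_j(1,\ldots,1;t)=U_j(t)$, as in the proof of Corollary~\ref{ar:cor:one}; this follows from the recurrence $U_{j+1}=(1+t)U_j-tU_{j-1}$. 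The weights become $1-r_{s+i}$. The cases $n=0,1$ are immediate from the definitions.

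Finally, positivity. The polynomials $E_j$ and $U_j$ have nonnegative coefficients, by Lemma~\ref{ar:lem:E} and by inspection respectively. For the weights, \eqref{ar:eq:rminus} and the monotonicity shown in the proof of Theorem~\ref{ar:thm:main} give two regimes. If $q<\beta<1$, then $(r_n)$ is decreasing, so $r_i-r_{s+i}\ge0$. If $\beta\le q$, then $r_n\le1$, so $1-r_{s+i}\ge0$. The shifted continuants $C_{s+i+1,\cdot}$ appearing on the right are coefficientwise nonnegative by Lemma~\ref{ar:thm:criterion} and Corollary~\ref{ar:cor:one}. Alternatively, this follows by induction on $n$ within the recursion itself, since the recursion expresses them through shorter continuants.

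The main obstacle is only bookkeeping. One must check the index shifts in the suffix continuant and the replacement order, so that the prefix carries the unmodified weights while the suffix carries the shifted ones.
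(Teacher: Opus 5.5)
Your proposal is correct and follows the paper's proof. You specialize the telescoping identity \eqref{ar:eq:telescoping} with $b_i=r_{s+i}$ and with $a_i=r_i$ or $a_i=1$, identify the suffix factor as $C_{s+i+1,n-i-1}$, take the signs of the weights from the monotonicity of $r_n$ and from \eqref{ar:eq:rminus}, and obtain positivity of the shifted continuants by induction on $n$, uniformly in $s$. Your re-derivation of \eqref{ar:eq:telescoping} from the matching expansion just spells out what the paper already does in the proof of Lemma~\ref{ar:thm:criterion}.
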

\begin{proof}
Use \eqref{ar:eq:telescoping} with $b_i=r_{s+i}$ and respectively
$a_i=r_i$ or $a_i=1$. The suffix beginning with $b_{i+2}$ is
$C_{s+i+1,n-i-1}$, which explains the shift in the formula.
In the first regime $r_i\ge r_{s+i}$; in the second $r_{s+i}\le1$.
Induction on $n$, starting from $C_{s,0},C_{s,1}$, proves positivity.
\end{proof}

\begin{proposition}\label{ar:prop:gamma}
The polynomial $A_n$ has degree $n$, constant and leading coefficient $1$, and is palindromic. More precisely, if $\mu_{2k}=\int x^{2k}\,d\nu_{\beta,q}(x)$, then
\begin{equation}\label{ar:eq:gamma}
 A_n(t;\beta,q)=\sum_{k=0}^{\lfloor n/2\rfloor}
 \binom n{2k}4^k\mu_{2k}\,t^k(1+t)^{n-2k}.
\end{equation}
For $t\ge0$, these moments form a determinate Stieltjes moment sequence supported on
$[(1-\sqrt t)^2,(1+\sqrt t)^2]$.
\end{proposition}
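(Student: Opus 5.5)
The plan is to expand the integrand in \eqref{ar:eq:moments} directly and use the symmetry of $\nu_{\beta,q}$. Writing $(1+t+2\sqrt t\,x)^n=\sum_{j}\binom nj (2\sqrt t\,x)^j(1+t)^{n-j}$ and integrating term by term, all odd moments $\int x^{j}\,d\nu_{\beta,q}$ vanish because the measure \eqref{ar:eq:measure} is symmetric under $\phi\mapsto\pi-\phi$, i.e.\ $x\mapsto -x$; the weight $|(e^{2i\phi};q)_\infty/(\beta e^{2i\phi};q)_\infty|^2$ is invariant since $e^{2i(\pi-\phi)}=e^{-2i\phi}$ and the modulus is conjugation invariant. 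For $j=2k$ we have $(2\sqrt t)^{2k}=4^kt^k$, which gives \eqref{ar:eq:gamma} immediately.

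From \eqref{ar:eq:gamma} the remaining algebraic claims follow. Each term $t^k(1+t)^{n-2k}$ is palindromic of degree $n$ about the centre $n/2$, so $A_n$ is palindromic. The $k=0$ term is $(1+t)^n$ with $\mu_0=1$ (probability measure), so the constant term is $1$. Terms with $k\ge1$ are divisible by $t$ and have degree $n-k<n$. Hence the leading coefficient is also $1$ and $\deg A_n=n$. Alternatively, the constant coefficient can be checked from \eqref{ar:eq:momentmatrix} at $t=0$, where $J_r(0)$ is upper bidiagonal with unit diagonal.

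For the Stieltjes statement, fix $t\ge0$. Then $A_n(t)=\int y^n\,d\rho_t(y)$, where $\rho_t$ is the pushforward of $\nu_{\beta,q}$ under $x\mapsto y=1+t+2\sqrt t\,x$. The support of $\nu_{\beta,q}$ is $[-1,1]$, so $\rho_t$ is a positive measure on $[1+t-2\sqrt t,\,1+t+2\sqrt t]=[(1-\sqrt t)^2,(1+\sqrt t)^2]\subset[0,\infty)$. Therefore $(A_n(t))_n$ is a Stieltjes moment sequence. Determinacy is automatic because a compactly supported measure is determined by its moments, by Weierstrass approximation. At $t=0$ the measure degenerates to $\delta_1$ and all statements still hold.

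The only point needing care, hardly an obstacle, is justifying the symmetry and the support of $\nu_{\beta,q}$ for negative $\beta$ as well. This is immediate from \eqref{ar:eq:measure}, since the density is positive on $(0,\pi)$ for $|\beta|<1$ (the factor $(\beta e^{2i\phi};q)_\infty$ has no zeros). Alternatively, the vanishing of the odd moments follows from the recurrence \eqref{ar:eq:monic}, which has zero diagonal coefficients.
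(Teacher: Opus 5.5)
Your proposal is correct and follows the paper's own proof: you expand binomially, kill the odd moments by the symmetry $x\mapsto-x$ of $\nu_{\beta,q}$, read the polynomial assertions off \eqref{ar:eq:gamma}, and get the Stieltjes and determinacy claims from the compact affine image of $[-1,1]$. One small precision: Weierstrass approximation alone gives uniqueness only among measures on that interval, and the bound $A_n(t)\le(1+\sqrt t)^{2n}$ is what forces any other representing measure on $[0,\infty)$ onto the same interval, which is the standard reason compact support implies determinacy.
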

\begin{proof}
Expand the integrand and use symmetry to remove odd moments. Formula \eqref{ar:eq:gamma} gives all the polynomial assertions. The affine image of $[-1,1]$ is the displayed nonnegative compact interval; compact support implies determinacy.
\end{proof}

The even Rogers moments appearing in \eqref{ar:eq:gamma} also admit a
finite expression. Indeed, using $2x=z+z^{-1}$ and the symmetrized
Fourier moments in \eqref{eq:FourierMoment}, we obtain
\begin{equation}\label{ar:eq:even-Rogers-moments}
 \mu_{2k}=\frac{1}{4^k}\left[\binom{2k}{k}
 +\sum_{m=1}^k\binom{2k}{k-m}(1+q^m)
 \frac{\prod_{j=0}^{m-1}(\beta-q^j)}{(\beta q;q)_m}\right].
\end{equation}
This formula is regular at $\beta=0$ and is valid throughout
$-1<\beta<1$. Alternatively, the recurrence \eqref{ar:eq:monic}
implies $4^k\mu_{2k}$ is the weighted sum over Dyck paths of
semilength $k$, where a down-step from height $j$ has weight $r_j$.
In particular, $\mu_0=1$, $\mu_2=r_1/4$, and
$\mu_4=r_1(r_1+r_2)/16$.

For $t>0$, every numerical Hankel minor is strictly positive. Indeed, the integral formula for such a minor is a product of two generalized Vandermonde determinants integrated against the positive product measure. On ordered, distinct positive points in the interior of the affine support, both determinants have the same positive sign. The interior contains an interval of positive points even when $t=1$, so the integral is strictly positive. This numerical assertion is separate from coefficientwise total positivity.

\subsection{Associated tails and stability}\label{ar:sec:associated}
For an integer $\ell\ge0$, let $J_r^{[\ell]}$ be the submatrix of $J_r$ on rows and columns $\ell,\ell+1,\ldots$, reindexed from zero. Its subdiagonal weights are $tr_{\ell+1},tr_{\ell+2},\ldots$. Define
\[
 A_n^{[\ell]}(t)=( (J_r^{[\ell]})^n)_{00}.
\]
These are the moment polynomials of the associated Jacobi recurrence; associated continuous $q$-ultraspherical polynomials have long been studied, beginning with \cite{BustozIsmail}. Theorem~\ref{ar:thm:main} shows that every sequence $A^{[\ell]}$ is coefficientwise Hankel totally positive. Its determinant formula is
\[
 \det[A_{i+j}^{[\ell]}]_{i,j=0}^m
 =t^{m(m+1)/2}\prod_{j=1}^m r_{\ell+j}^{m+1-j}.
\]
The once-shifted determinant is obtained by multiplying this expression by the corresponding contiguous continuant.

\begin{corollary}\label{ar:cor:stability}
The moment sequence of $J_b(t)$ is coefficientwise Hankel totally positive in either of the following cases:
\begin{enumerate}
\item $0\le b_i\le1$ for every $i$;
\item $q<\beta<1$ and $0\le b_i\le r_i(\beta,q)$ for every $i$.
\end{enumerate}
The sequence $b$ need not be monotone. The moment sequence generated
by every associated Jacobi tail enjoys the same conclusion.
\end{corollary}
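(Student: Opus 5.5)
The plan is to reduce both cases to the total positivity of $J_b(t)$ and then apply Proposition~\ref{ar:prop:transfer}. By \eqref{ar:eq:momentmatrix} and the same orthogonal-polynomial argument, the moment sequence of $J_b(t)$ is $B_n=(J_b(t)^n)_{00}$. So it suffices to show that $J_b(t)$ is coefficientwise totally positive.

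In case (1) this is exactly Corollary~\ref{ar:cor:one}; no monotonicity of $b$ is used there, since the comparison sequence $a_i=1$ is itself nonincreasing. In case (2) I would apply Lemma~\ref{ar:thm:criterion} with comparison sequence $a_i=r_i(\beta,q)$. Two hypotheses must be checked.
\begin{itemize}
\item Monotonicity $r_1\ge r_2\ge\cdots\ge0$. This was established in the proof of Theorem~\ref{ar:thm:main} from \eqref{ar:eq:rminus}, using that $f$ is increasing on $(0,1]$ when $q<\beta<1$.
\item Coefficientwise nonnegativity of the leading continuants $C_n(r_1,\ldots,r_{n-1};t)=E_n(t)$. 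This is Lemma~\ref{ar:lem:E}.
\end{itemize}
Lemma~\ref{ar:thm:criterion} then gives nonnegativity of every $C_n(b_1,\ldots,b_{n-1};t)$ whenever $0\le b_i\le r_i$, and hence total positivity of $J_b(t)$. This holds without any monotonicity of $b$, because the lemma only needs the dominating sequence to be monotone. The point to watch is that the lemma's induction applies the box condition to suffixes: the suffix $(b_{i+2},\ldots)$ is dominated by $(a_{i+2},\ldots)\le(a_1,\ldots)$ precisely because $a$ is nonincreasing. That is why monotonicity is required of $r$ but not of $b$.

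For the associated tails, $J_b^{[\ell]}$ has weights $b_{\ell+1},b_{\ell+2},\ldots$. In case (1) these still lie in $[0,1]$. In case (2) they satisfy $b_{\ell+j}\le r_{\ell+j}\le r_j$, again by monotonicity of $r$, so the shifted sequence lies in the box $0\le b'_j\le r_j$. Alternatively, one can note directly that $J_b^{[\ell]}$ is a submatrix of the totally positive $J_b(t)$. Either way, Proposition~\ref{ar:prop:transfer} applied to the tail gives Hankel total positivity of $A^{[\ell]}$-type sequences for $J_b$.

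I expect no substantial obstacle. The only delicate points are that the criterion is used with a nonmonotone $b$ and that boundary cases with some $b_i=0$ are allowed. Proposition~\ref{ar:prop:transfer} already handles the latter, via its division-free reversal identity.
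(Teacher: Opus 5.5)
Your proposal is correct and follows the paper's own proof. You apply Lemma~\ref{ar:thm:criterion} with $a_i=1$ (that is, Corollary~\ref{ar:cor:one}) or with $a_i=r_i$, which is monotone by \eqref{ar:eq:rminus} and has leading continuants $E_n$ by Lemma~\ref{ar:lem:E}. You then apply Proposition~\ref{ar:prop:transfer}, and tails are handled because total positivity passes to submatrices. Your appeal to an orthogonal-polynomial argument for $B_n=(J_b(t)^n)_{00}$ is unnecessary: for general $b$ this is simply the definition of the moment sequence.
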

\begin{proof}
Apply Lemma~\ref{ar:thm:criterion} with $a_i=1$ in the first case and $a_i=r_i$ in the second, followed by Proposition~\ref{ar:prop:transfer}. Total positivity passes to tails.
\end{proof}

The same reasoning allows independent nonnegative diagonal additions. More precisely, replace the diagonal entry in row $i$ by $1+t+u_i$, where the $u_i$ are formal indeterminates. Each contiguous principal determinant expands in the $u_i$ as a sum of monomials multiplied by products of contiguous continuants of $J_b$. Those continuants are coefficientwise nonnegative in the cases above. Thus the modified matrix is totally positive over $\R[t,u_0,u_1,\ldots]$ with coefficientwise order, and its zeroth-column moments are Hankel totally positive over that ring. This diagonal stability is a standard consequence of the tridiagonal criterion, consistent with the comparison results cited in the introduction.

\section{Extreme coefficients of arbitrary Hankel minors}\label{ar:sec:extreme}
The production-matrix factorization gives more than nonnegativity.
Its first nonzero term can be calculated for every choice of indices.

\begin{theorem}[Universal factor and extreme coefficients]\label{ar:thm:extreme}
Let $r_h>0$ and assume that the matrix $J_r(t)$ in \eqref{ar:eq:Jb}
is coefficientwise totally positive. Write $A_n(t)=(J_r(t)^n)_{00}$.
For increasing nonnegative index sets
$I=(i_1<\cdots<i_m)$ and $J=(j_1<\cdots<j_m)$, where $m\ge1$, put
\[
 D_{I,J}(t)=\det[A_{i_a+j_b}(t)]_{a,b=1}^m,\qquad
 N=\sum_{a=1}^m i_a+\sum_{b=1}^m j_b,\qquad c=\binom m2.
\]
Then
\begin{equation}\label{ar:eq:minor-factor}
 D_{I,J}(t)=t^c F_{I,J}(t),\qquad F_{I,J}(t)\in\R_{\ge0}[t],
 \qquad F_{I,J}(t)=t^{N-2c}F_{I,J}(1/t).
\end{equation}
Moreover,
\begin{equation}\label{ar:eq:extreme-coefficient}
 F_{I,J}(0)=[t^{N-2c}]F_{I,J}(t)
 =\left(\prod_{h=1}^{m-1}r_h^{m-h}\right)
 \frac{\displaystyle\prod_{a<b}(i_b-i_a)(j_b-j_a)}
 {\displaystyle\prod_{h=0}^{m-1}(h!)^2}>0.
\end{equation}
Thus the order of vanishing of $D_{I,J}$ at $t=0$ is exactly $c$,
and its degree is exactly $N-c$. The result applies to Rogers moments
in Theorem~\ref{ar:thm:main} and to the moment sequences generated
by all associated Jacobi tails, with $r_h$ replaced by $r_{\ell+h}$
for the tail starting at $\ell$.
\end{theorem}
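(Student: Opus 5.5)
The plan is to combine the factorization \eqref{ar:eq:factorH} from the proof of Proposition~\ref{ar:prop:transfer} with the Cauchy--Binet formula. That proposition gives $H=O\diag(d_0,d_1,\ldots)O^{\mathsf T}$ with $O_{ik}=(J_r(t)^i)_{0k}$ and $d_k=t^k\prod_{j=1}^k r_j$. Cauchy--Binet then yields
\[
 D_{I,J}(t)=\sum_{K}\det O[I,K]\,\Big(\prod_{k\in K}d_k\Big)\det O[J,K],
\]
where $K=(k_1<\cdots<k_m)$ runs over finite index sets; only finitely many $K$ contribute, since $O_{ik}=0$ for $k>i$. By hypothesis $J_r(t)$ is coefficientwise totally positive, so Proposition~\ref{ar:prop:transfer} makes $O$ coefficientwise totally positive. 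Hence every summand lies in $\R_{\ge0}[t]$.

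Each $\prod_{k\in K}d_k$ is divisible by $t^{k_1+\cdots+k_m}$, and $k_1+\cdots+k_m\ge 0+1+\cdots+(m-1)=c$. Equality holds only for $K_0=\{0,1,\ldots,m-1\}$. This gives $D_{I,J}=t^cF_{I,J}$ with $F_{I,J}\in\R_{\ge0}[t]$. For the same reason, $F_{I,J}(0)$ receives a contribution from $K_0$ alone, so no cancellation can occur.

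For that term I will evaluate $O$ at $t=0$. Then $J_r(0)$ has $1$ on the diagonal and superdiagonal and $0$ below, so $O_{ik}(0)=\binom ik$. The $t$-free part of $\prod_{k<m}d_k$ is $\prod_{k=1}^{m-1}\prod_{j\le k}r_j=\prod_{h=1}^{m-1}r_h^{m-h}$. The remaining factor is
\[
 \det\Big[\tbinom{i_a}{k}\Big]_{a\le m,\,0\le k<m}=\frac{\prod_{a<b}(i_b-i_a)}{\prod_{h=0}^{m-1}h!},
\]
because $\binom{i}{k}$ is a polynomial of degree $k$ in $i$ with leading coefficient $1/k!$; column operations then reduce the determinant to a Vandermonde determinant. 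The analogous determinant for $J$ is handled the same way. The product of these pieces is exactly \eqref{ar:eq:extreme-coefficient}. It is strictly positive because $r_h>0$ and the index sets are strictly increasing.

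Next I will prove palindromy. The quantity $A_n(t)$ is a sum over weighted Motzkin paths of length $n$, in which level steps have weight $1+t$, up steps weight $1$, and down steps weight $tr_h$. A path with $u$ up steps contributes $t^u(1+t)^{n-2u}$ times a scalar, and this is palindromic of degree $n$. Hence $A_n(t)=t^nA_n(1/t)$ without any positivity assumption on the $r_h$. Multiplying row $a$ by $t^{i_a}$ and column $b$ by $t^{j_b}$ gives $D_{I,J}(t)=t^ND_{I,J}(1/t)$. Substituting $D=t^cF$ gives $F_{I,J}(t)=t^{N-2c}F_{I,J}(1/t)$. The top coefficient $[t^{N-2c}]F$ therefore equals $F(0)>0$, so the order of vanishing of $D_{I,J}$ at $0$ is exactly $c$ and its degree is exactly $N-c$.

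For the Rogers moments, Theorem~\ref{ar:thm:main} supplies the required coefficientwise total positivity of $J_r(t)$, and \eqref{ar:eq:r} gives $r_h>0$. For an associated tail starting at $\ell$, I will rerun the same argument with $J_r^{[\ell]}$, whose subdiagonal weights are $tr_{\ell+h}$. I expect the only delicate point to be the uniqueness of the minimal-degree index set $K_0$ together with the binomial-determinant evaluation. Both are elementary once the Cauchy--Binet expansion is written down.
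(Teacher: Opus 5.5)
Your proposal is correct and follows the paper's proof essentially step for step. The shared steps are Cauchy--Binet applied to $H=O\diag(d_0,d_1,\ldots)O^{\mathsf T}$, uniqueness of the minimal index set $K_0=(0,\ldots,m-1)$, the evaluation $O_{nk}(0)=\binom nk$ together with the binomial Vandermonde determinant, and palindromy from the Motzkin weights $t^u(1+t)^{n-2u}$. For the associated tails, the one point to state explicitly is that $J_r^{[\ell]}$ is a submatrix of $J_r(t)$ and therefore inherits coefficientwise total positivity, which is exactly how the paper handles it.
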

\begin{proof}
Set $O_{nk}=(J_r(t)^n)_{0k}$ and $d_k=t^k\prod_{h=1}^k r_h$.
By Proposition~\ref{ar:prop:transfer} and \eqref{ar:eq:factorH},
Cauchy--Binet gives the finite expansion
\begin{equation}\label{ar:eq:minor-CB}
 D_{I,J}(t)=
 \sum_{0\le k_1<\cdots<k_m}
 \left(\prod_{a=1}^m d_{k_a}\right)
 \det O_{I,K}(t)\det O_{J,K}(t).
\end{equation}
Here $K=(k_1,\ldots,k_m)$ and terms with
$k_m>\min(i_m,j_m)$ vanish. Each term has nonnegative coefficients,
and $\sum_a k_a\ge c$, proving divisibility and nonnegativity of
the quotient.

At $t=0$, the production matrix has diagonal and superdiagonal
entries equal to $1$ and all other entries zero. Hence
$O_{nk}(0)=\binom nk$. Only $K=(0,1,\ldots,m-1)$ contributes to
$[t^c]D_{I,J}$. The elementary determinant identity
\[
 \det\left[\binom{i_a}{b-1}\right]_{a,b=1}^m
 =\frac{\prod_{a<b}(i_b-i_a)}{\prod_{h=0}^{m-1}h!}
\]
follows by expressing the binomial polynomials in the monomial
basis: their degrees are $b-1$ and their leading coefficients are
$1/(b-1)!$. Applying it to $I$ and $J$ proves the constant term in
\eqref{ar:eq:extreme-coefficient}, which is strictly positive.

Finally a length-$n$ Motzkin excursion with $j$ down steps contributes
a constant multiple of $t^j(1+t)^{n-2j}$. Consequently
$A_n(t)=t^n A_n(1/t)$. Factoring $t^{i_a}$ from each row and
$t^{j_b}$ from each column gives $D_{I,J}(t)=t^N D_{I,J}(1/t)$.
After division by $t^c$ this is \eqref{ar:eq:minor-factor}.
The nonzero constant term then determines the exact degree and the
equal highest-degree coefficient.
\end{proof}

\begin{remark}[Palindromicity does not imply gamma positivity]
\label{ar:rem:minors-not-gamma}
Even for the classical Narayana endpoints the quotients $F_{I,J}$
need not be gamma-positive. For $I=(0,1)$ and $J=(1,2)$,
\[
 D_{I,J}(t)=A_1A_3-A_2^2
 =tr_1\bigl((1+t)^2-tr_1\bigr).
\]
For type A ($r_1=1$), its quotient by $t$ is
$1+t+t^2=(1+t)^2-t$, with gamma coefficient $-1$.
For type B ($r_1=2$), the quotient is
$2(1+t^2)=2(1+t)^2-4t$, with gamma coefficient $-4$.
Thus Theorem~\ref{ar:thm:extreme} supplies palindromic polynomials with
nonnegative ordinary coefficients, but not gamma positivity of all
normalized minors.
\end{remark}

\section{Jacobi fractions and Hankel determinants for affine moments}\label{ar:sec:fractions}
Set $s=1+t$. The first moments are
\begin{align*}
 A_0&=1,& A_1&=s,& A_2&=s^2+tr_1,\\
 A_3&=s^3+3str_1,&
 A_4&=s^4+6s^2tr_1+t^2r_1(r_1+r_2).
\end{align*}
The weighted Motzkin-path expansion of \eqref{ar:eq:momentmatrix} gives the Jacobi continued fraction \cite{Flajolet}
\begin{equation}\label{ar:eq:Jfrac}
 \sum_{n\ge0}A_nz^n=
 \cfrac1{1-sz-\cfrac{tr_1z^2}{1-sz-\cfrac{tr_2z^2}{1-sz-\ddots}}}.
\end{equation}
This is a formal identity in $z$ with coefficients in $\R[t]$.

Define the principal and once-shifted Hankel determinants by
\[
 \Delta_m=\det[A_{i+j}]_{i,j=0}^m,\qquad
 \Delta_m^{(1)}=\det[A_{i+j+1}]_{i,j=0}^m.
\]
\begin{proposition}\label{ar:prop:det}
For $m\ge0$,
\begin{equation}\label{ar:eq:det}
 \Delta_m=t^{m(m+1)/2}\prod_{j=1}^m r_j^{m+1-j},\qquad
 \Delta_m^{(1)}=E_{m+1}(t)\Delta_m.
\end{equation}
\end{proposition}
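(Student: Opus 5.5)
The plan is to use the factorization \eqref{ar:eq:factorH} from Proposition~\ref{ar:prop:transfer}, applied to $J_r(t)$ with weights $b_i=r_i$. Write $H_m=[A_{i+j}]_{i,j=0}^m$. Then
\[
H_m=O_m\,\diag(d_0,\ldots,d_m)\,O_m^{\mathsf T},
\qquad d_k=t^k\prod_{h=1}^k r_h,
\]
where $O_m$ is the truncation of $O$ to rows and columns $0,\ldots,m$. This truncation is exact: $O_{ik}=0$ for $k>i$, so only $k\le m$ contributes to entries with $i,j\le m$. Moreover $O$ is lower unitriangular, since $O_{ii}=(J_r^i)_{0i}$ counts the unique path of $i$ up-steps, each of weight $1$. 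Hence $\det O_m=1$ and
\[
\Delta_m=\prod_{k=0}^m d_k=t^{\sum_k k}\prod_{k=1}^m\prod_{h=1}^k r_h=t^{m(m+1)/2}\prod_{j=1}^m r_j^{m+1-j}.
\]
This settles the first formula. Equivalently, it is the standard Heilermann formula read off the J-fraction \eqref{ar:eq:Jfrac}, with $\beta_j=tr_j$.

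For the shifted determinant, insert one factor of $J_r$. Since $(J_r^{i+j+1})_{00}=\sum_{k,l}(J_r^i)_{0k}(J_r)_{kl}(J_r^j)_{l0}$ and $(J_r^j)_{l0}=d_l O_{jl}$, we get
\[
A_{i+j+1}=\sum_{k,l}O_{ik}(J_r)_{kl}d_lO_{jl},
\qquad
H^{(1)}_m=O_m\,\bigl(J_r\diag(d)\bigr)_{[0,m]}\,O_m^{\mathsf T}.
\]
The main step, and the only real obstacle, is to check that this truncation is also exact. Because $O_{ik}=0$ for $k>i$, only $k\le m$ and $l\le m$ appear. The tridiagonal entry $(J_r)_{m,m+1}$ is therefore never used, so the middle factor is exactly the leading $(m+1)\times(m+1)$ block of $J_r$ times $\diag(d_0,\ldots,d_m)$.

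Taking determinants then gives
\[
\Delta_m^{(1)}=\det(J_r)_{[0,m]}\prod_{k=0}^m d_k=E_{m+1}(t)\Delta_m.
\]
Here the leading principal $(m+1)\times(m+1)$ minor of $J_r$ is the continuant $C_{m+1}(r_1,\ldots,r_m;t)$, which equals $E_{m+1}(t)$ by Lemma~\ref{ar:lem:E} via the recurrence \eqref{ar:eq:Erec}.

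As a sanity check, take $m=0$: this gives $\Delta_0^{(1)}=A_1=1+t=E_1$. Take $m=1$: this gives $\Delta_1=A_0A_2-A_1^2=tr_1$, in agreement with the listed moments.
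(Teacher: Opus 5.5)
Your proof is correct. For $\Delta_m$ it is the paper's argument in different language. The paper factors the Hankel matrix through the monic orthogonal polynomials $Q_n$, whose squared norms $h_n=t^n\prod_{j\le n}r_j$ are your $d_n$. Your lower unitriangular $O$ is exactly the inverse of that monic change of basis.

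For $\Delta_m^{(1)}$ the routes genuinely differ. The paper uses the classical determinantal formula for $Q_{m+1}$ to obtain $Q_{m+1}(0)=(-1)^{m+1}\Delta_m^{(1)}/\Delta_m$. It then identifies $(-1)^{m+1}Q_{m+1}(0)$ with $E_{m+1}$ via the recurrence \eqref{ar:eq:Q}. This divides by $\Delta_m$, and the norm computation uses the functional at $t>0$, so both formulas are first proved for $t>0$ and then extended by polynomial continuation.

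You instead insert one factor of $J_r$ into the path expansion and use the reversal identity $(J_r^j)_{l0}=d_lO_{jl}$ from Proposition~\ref{ar:prop:transfer}. Because $O$ is lower triangular, the factorization $H_m^{(1)}=O_m\bigl(J_r\diag(d)\bigr)_{[0,m]}O_m^{\mathsf T}$ is exact, and $E_{m+1}$ appears directly as a leading principal minor of $J_r$.

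What your version buys:
\begin{itemize}
\item It is division-free and holds identically in $\R[t]$, indeed for arbitrary weights $r_j$.
\item It needs no positivity or continuation step.
\item It stays inside the production-matrix framework already used for Theorems~\ref{ar:thm:main} and~\ref{ar:thm:extreme}.
\end{itemize}
The paper's version instead exhibits the shifted Hankel ratio as the value at $0$ of the orthogonal polynomial $Q_{m+1}$, which is the classical moment-theoretic reading. The two meet because $(-1)^{m+1}Q_{m+1}(0)=\det(J_r)_{[0,m]}$.
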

\begin{proof}
The squared norms of the monic polynomials $Q_n$ are
$h_n=t^n\prod_{j=1}^n r_j$, with $h_0=1$. The triangular monic change of basis from powers to orthogonal polynomials gives $\Delta_m=\prod_{n=0}^m h_n$, which proves the first formula for $t>0$ and hence polynomially for all $t$.

The standard determinantal formula for $Q_{m+1}$, evaluated at zero, gives
\[Q_{m+1}(0)=(-1)^{m+1}\Delta_m^{(1)}/\Delta_m.\]
Alternatively this follows by replacing the last row of the moment determinant by $(1,y,\ldots,y^{m+1})$ and expanding at $y=0$. Recurrence \eqref{ar:eq:Q} shows $(-1)^{m+1}Q_{m+1}(0)=E_{m+1}$. Again polynomial continuation includes $t=0$.
\end{proof}

There is also an explicit Stieltjes continued fraction
\begin{equation}\label{ar:eq:Sfrac}
 \sum_{n\ge0}A_nz^n=
 \cfrac1{1-\cfrac{\alpha_1z}{1-\cfrac{\alpha_2z}{1-\ddots}}},
\end{equation}
where
\begin{equation}\label{ar:eq:alpha}
 \alpha_{2n+1}=\frac{E_{n+1}}{E_n}\quad(n\ge0),\qquad
 \alpha_{2n}=\frac{tr_nE_{n-1}}{E_n}\quad(n\ge1).
\end{equation}
Indeed, \eqref{ar:eq:Erec} gives $\alpha_{2n}+\alpha_{2n+1}=s$ for $n\ge1$, while $\alpha_1=s$ and $\alpha_{2n-1}\alpha_{2n}=tr_n$. The usual contraction of \eqref{ar:eq:Sfrac} is therefore \eqref{ar:eq:Jfrac}. Since $E_n(t)>0$ for $t\ge0$, these coefficients are numerically nonnegative there. They are rational functions of $t$: no claim that their Taylor coefficients are all nonnegative is needed for our coefficientwise Hankel theorem.
For related accounts of Stieltjes moments and continued fractions
from combinatorial triangles, see Zhu~\cite{Zhu2021}; Sokal and
Walrad~\cite{SokalWalrad} give a continued-fraction characterization
with a prescribed lower bound on the support.

\section{A triangular link between the two \texorpdfstring{$q$}{q}-deformations}
\label{sec:two-bases}
Let $y=1+t+2\sqrt t\,x$, where $x=\cos\phi$. The identity
\begin{equation}\label{eq:change-basis-factor}
(-\sqrt t\,e^{i\phi},-\sqrt t\,e^{-i\phi};q)_n
 =\prod_{j=0}^{n-1}
 \left(q^jy+(1-q^j)(1-q^jt)\right)
\end{equation}
is an exact polynomial identity. Define $c_{n,k}(t;q)$ by
\begin{equation}\label{eq:triangular-c}
\prod_{j=0}^{n-1}\left(q^jy+(1-q^j)(1-q^jt)\right)
=\sum_{k=0}^nc_{n,k}(t;q)y^k.
\end{equation}
Then $c_{0,0}=1$, $c_{n,k}=0$ for $k<0$ or $k>n$, and
\begin{equation}\label{eq:triangular-recurrence}
c_{n+1,k}=q^n c_{n,k-1}+(1-q^n)(1-q^nt)c_{n,k}.
\end{equation}
In particular, $c_{n,n}=q^{\binom n2}$.
\begin{proposition}[Change of moment basis]\label{prop:two-bases}
For $\theta>0$ and $0<q<1$,
\begin{equation}\label{eq:two-bases-moments}
P_n^{(\theta)}(t;q)=\sum_{k=0}^n
 c_{n,k}(t;q)\,A_k(t;q^\theta,q).
\end{equation}
The formula extends to $\theta=0$ by continuity of the moments.
\end{proposition}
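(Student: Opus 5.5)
The plan is to work pointwise under the Rogers functional with $\beta=q^\theta$, and to use the factor identity \eqref{eq:change-basis-factor} together with the coefficient definition \eqref{eq:triangular-c}. Both are polynomial identities in $y$, $t$, and $\cos\phi$.

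First I verify \eqref{eq:change-basis-factor} factor by factor. For fixed $j$,
\[(1+q^j\sqrt t\,e^{i\phi})(1+q^j\sqrt t\,e^{-i\phi})=1+2q^j\sqrt t\,x+q^{2j}t.\]
On the other hand, substituting $y=1+t+2\sqrt t\,x$ gives
\[q^jy+(1-q^j)(1-q^jt)=q^j+q^jt+2q^j\sqrt t\,x+1-q^j-q^jt+q^{2j}t,\]
which is the same expression. Taking the product over $0\le j\le n-1$ gives the identity.

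Next I expand the product in powers of $y$ via \eqref{eq:triangular-c}. The coefficients $c_{n,k}(t;q)$ are polynomials in $t$ that do not depend on $x$. Hence the integrand of $P_n^{(\theta)}$ equals
\[\sum_{k=0}^n c_{n,k}(t;q)\,(1+t+2\sqrt t\,x)^k\]
as a polynomial in $x$, with coefficients that are polynomials in $\sqrt t$. I then apply the linear functional $\mathcal L_{q^\theta,q}$, the normalized Rogers measure $\nu_{q^\theta,q}$ of \eqref{ar:eq:measure}, term by term. Since $\theta>0$, we have $\beta=q^\theta\in(0,1)$, so $\nu_{\beta,q}$ is the same probability measure used in both definitions, and \eqref{ar:eq:moments} identifies $\mathcal L_{\beta,q}[y^k]=A_k(t;q^\theta,q)$. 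Linearity yields \eqref{eq:two-bases-moments}.

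One bookkeeping point needs attention: the definition of $P_n^{(\theta)}$ in \eqref{eq:two-deformations-intro} must use the same normalized functional as $A_k$. Both are defined through $\mathcal L_{\beta,q}$ with $\beta=q^\theta$, so this holds. The half-integer powers of $t$ cancel because the measure is symmetric: odd moments vanish, so both sides are genuine polynomials in $t$. The recurrence \eqref{eq:triangular-recurrence} follows by multiplying by the next factor $q^ny+(1-q^n)(1-q^nt)$, but it is not needed for the identity itself.

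For $\theta=0$, i.e.\ $\beta=1$, the Rogers weight degenerates, and I pass to the limit. Each moment $\int x^{2m}\,d\nu_{\beta,q}$ is continuous as $\beta\uparrow1$, for instance from the finite formula \eqref{ar:eq:even-Rogers-moments}, which is a rational function of $\beta$ regular at $\beta=1$. The same formula, or the Dyck-path expression in terms of the $r_j$ (with $r_1\to2$ and $r_n\to1$ for $n\ge2$), shows that both sides of \eqref{eq:two-bases-moments} are finite combinations of these moments. Their common limit defines the $\theta=0$ case, consistent with the convention in Theorem~\ref{ar:thm:main}.

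The only real subtlety, and the step I expect to be the main obstacle, is confirming that the $\theta=0$ objects on both sides are defined by this same limit. On the $P$ side one can instead note that $K^{(0)}_{n,k}(q)$ in \eqref{eq:qkernel-intro} is visibly continuous in $\theta$. Granting either convention, the algebra above is routine.
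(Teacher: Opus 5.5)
Your proposal is correct and follows the paper's proof. You apply the normalized Rogers functional $\mathcal L_{q^\theta,q}$ to the polynomial identity \eqref{eq:triangular-c} with $y=1+t+2\sqrt t\,x$, after checking \eqref{eq:change-basis-factor} factor by factor, and then use linearity together with the moment definitions of $P_n^{(\theta)}$ and $A_k$. Your treatment of $\theta=0$ uses the finite formula \eqref{ar:eq:even-Rogers-moments}, whose even moments are regular at $\beta=1$ and tend to the arcsine moments; this is exactly the continuity-of-moments argument the paper invokes, only spelled out.
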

\begin{proof}
Apply the normalized Rogers integral $\mathcal L_{q^\theta,q}$
to \eqref{eq:triangular-c} and use the moment definitions.
Linearity gives \eqref{eq:two-bases-moments}.
\end{proof}
\begin{remark}
The coefficient $c_{n,k}(t;q)$ is not in general coefficientwise
nonnegative in $t$: already $c_{2,1}=(1-q)(1-qt)$.
Thus the change-of-basis formula alone does not transfer
coefficientwise Hankel or matrix total positivity between the
two families. At $q=1$ the product in \eqref{eq:change-basis-factor}
is $y^n$, so $c_{n,k}\to\delta_{n,k}$; this is the common classical limit.
\end{remark}

\section{The Askey--Wilson-basis deformation}

For every $\theta>-1$, define the coefficient kernel
$K_{n,k}^{(\theta)}(q)$ algebraically by \eqref{eq:qkernel-intro}, and
put $P_n^{(\theta)}(t;q)=\sum_{k=0}^nK_{n,k}^{(\theta)}(q)t^k$.
To obtain a positive-measure representation, first restrict to
$\theta>0$, so that $\beta=q^\theta\in(0,1)$.
Let $C_m(x;\beta\mid q)$ be the Rogers polynomials and let
$\Lfun_\beta$ be integration against their normalized positive
orthogonality measure; see \cite[Section~18.28(v)]{DLMF}.
At $\theta=0$ we use the limiting arcsine functional.

For $-1<\theta<0$, the symbol $\Lfun_{q^\theta}$ will instead mean
the algebraic continuation specified by the symmetric Laurent
moments in \eqref{eq:FourierMoment}; no positivity is asserted.
Those moments are regular because $\beta q=q^{\theta+1}<1$,
with $\beta^m(\beta^{-1};q)_m=\prod_{j=0}^{m-1}(\beta-q^j)$.
The distinction is necessary: if $-1<\theta<-1/2$, then
\[
 r_2=\frac{(1-q^2)(1-\beta^2q)}
 {(1-\beta q)(1-\beta q^2)}<0,
\]
which is incompatible with a positive orthogonality measure having
that recurrence. The matrix-positivity results for $\theta>-1$
use the coefficient formula and do not require a positive Rogers
measure in this extended range.

For $x=\cos\phi$ and $z=e^{i\phi}$, introduce the Askey--Wilson basis
\[
 \Phi_n(x;A\mid q)=(Az,A/z;q)_n.
\]
The natural $q$-analogue of $(1+t+2\sqrt t\,x)^n$ is
\begin{equation}\label{eq:AWkernel}
 \Phi_n(x;-\sqrt t\mid q)
 =(-\sqrt t\,z,-\sqrt t/z;q)_n
 =\prod_{j=0}^{n-1}
 \bigl(1+2q^j\sqrt t\,x+q^{2j}t\bigr).
\end{equation}
Indeed,
\[
 \lim_{q\to1^-}\Phi_n(x;-\sqrt t\mid q)
 =(1+t+2\sqrt t\,x)^n.
\]

The moment representation to be proved is
\begin{equation}\label{eq:qmoment-def}
 P_n^{(\theta)}(t;q)
 =\Lfun_{q^\theta}\!
 \left[(-\sqrt t\,z,-\sqrt t/z;q)_n\right].
\end{equation}

\begin{theorem}[Rogers--Askey--Wilson moment formula]\label{thm:moment}
For $\theta>0$, the positive Rogers integral in
\eqref{eq:qmoment-def} equals the algebraically defined polynomial
$P_n^{(\theta)}$. The equality extends to $\theta=0$ by a limit
and to $-1<\theta<0$ by algebraic continuation. For $n\ge0$,
\begin{equation}\label{eq:mainmoment}
 P_n^{(\theta)}(t;q)
 =\sum_{k=0}^n
 q^{k(k-1)}\qbinom{n}{k}
 \frac{(q^{\theta+1};q)_n}
 {(q^{\theta+1};q)_k(q^{\theta+1};q)_{n-k}}
 t^k.
\end{equation}
Equivalently, with $\beta=q^\theta$,
\begin{equation}\label{eq:2phi1}
 P_n^{(\theta)}(t;q)
 ={}_2\phi_1\!\left(
 \begin{matrix}q^{-n},\beta^{-1}q^{-n}\\ \beta q\end{matrix};
 q,\beta q^{2n}t\right).
\end{equation}
\end{theorem}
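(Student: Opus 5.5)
The plan is to evaluate the Rogers integral of the Askey--Wilson basis element directly. The key fact is that $(-\sqrt t\,z,-\sqrt t/z;q)_n$, as a Laurent polynomial in $z$, has an explicit expansion. Its Rogers moments are then read off from the symmetric Laurent moments $\Lfun_\beta[z^m]$, the Fourier moments of \eqref{eq:FourierMoment}.

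First, I apply the $q$-binomial theorem to each factor:
\[
(-\sqrt t\,z;q)_n=\sum_{j}q^{\binom j2}\qbinom nj t^{j/2}z^j,
\qquad
(-\sqrt t/z;q)_n=\sum_{l}q^{\binom l2}\qbinom nl t^{l/2}z^{-l}.
\]
The product is a double sum with monomials $t^{(j+l)/2}z^{j-l}$. Taking $\Lfun_\beta$ kills nothing by parity, since every $z^{m}$ has a moment, but it pairs $z^m$ with $z^{-m}$. I then group terms by $k=(j+l)/2$. Only integer powers of $t$ may survive, so I must check that half-integer powers cancel. They do: the moments of $z^{m}$ vanish for odd $m$, because the Rogers weight is invariant under $\phi\mapsto\pi-\phi$, i.e.\ $z\mapsto -1/z$ up to symmetry. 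Hence $j\equiv l \pmod 2$.

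Second, I insert the Fourier moments. For $\theta>0$ these are
\[
\Lfun_\beta[z^{2m}]=\frac{\beta^m(\beta^{-1};q)_m}{(\beta q;q)_m}\cdot(\text{sign/normalisation}),
\]
which I would rederive from the Laurent expansion \eqref{ar:eq:rogers}. The weight $|(e^{2i\phi};q)_\infty/(\beta e^{2i\phi};q)_\infty|^2$ expands, via the $q$-binomial theorem, as a product of two series in $e^{\pm 2i\phi}$, and its Fourier coefficients are terminating ${}_1\phi_0$-type sums. The coefficient of $t^k$ then becomes a single finite sum over $m$ (with $j=k+m$, $l=k-m$) of
\[
q^{\binom{k+m}2+\binom{k-m}2}\qbinom n{k+m}\qbinom n{k-m}\Lfun_\beta[z^{2m}].
\]
Note that $\binom{k+m}2+\binom{k-m}2=k(k-1)+m^2$.

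Third, and this is the main obstacle, I must sum this over $m$ in closed form to obtain $\qbinom nk(q^{\theta+1};q)_n/((q^{\theta+1};q)_k(q^{\theta+1};q)_{n-k})$. After normalising, the sum is a very-well-poised terminating ${}_6\phi_5$-type series in $q^{m}$, with the bilateral symmetry $m\mapsto -m$ giving the factor $(1+q^m)$-type well-poised term. I would try Jackson/Rogers' ${}_6\phi_5$ summation first. If the parameters do not match, the fallback is the second route mentioned in the paper: use the Nasrallah--Rahman/Askey--Wilson integral. Multiply the Rogers weight by $(az,a/z;q)_n=(az,a/z;q)_\infty/(aq^nz,aq^n/z;q)_\infty$ and view $\Lfun_\beta[\Phi_n]$ as a ratio of Askey--Wilson-type integrals with parameters $\beta^{1/2},-\beta^{1/2},(q\beta)^{1/2},-(q\beta)^{1/2}$ (the standard factorisation of the Rogers weight), together with an extra pair $a,aq^n$. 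The resulting ${}_8W_7$ or ${}_4\phi_3$ then reduces, via a quadratic transformation, to the ${}_2\phi_1$ in \eqref{eq:2phi1}.

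Finally, I would check that \eqref{eq:2phi1} matches \eqref{eq:mainmoment}. Expanding termwise, $(q^{-n};q)_k(\beta^{-1}q^{-n};q)_k\beta^kq^{2nk}q^k/((q;q)_k(\beta q;q)_k)$ simplifies, by reversing $(a q^{-n};q)_k$ factors, to $q^{k(k-1)}\qbinom nk(\beta q;q)_n/((\beta q;q)_k(\beta q;q)_{n-k})$; this is routine. The extension to $\theta=0$ follows because both sides are continuous in $\beta\downarrow1$, where the moments converge to arcsine moments. For $-1<\theta<0$, both sides are rational in $\beta$, regular for $\beta q<1$, and agree on $\beta\in(0,1)$. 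Since the functional is defined there by the same Laurent moments, the identity continues algebraically.
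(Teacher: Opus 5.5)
Your plan is essentially the paper's first proof: the $q$-binomial expansion of both factors, pairing $r=k\pm m$ against the symmetrized moments \eqref{eq:FourierMoment}, and the $a\to1$ limit of the terminating ${}_6\phi_5$ sum \eqref{eq:6phi5}, whose parameters do match with $N=k$, $b=q^{k-n}$, $c=\beta^{-1}$; your fallback is exactly the paper's second proof (Nasrallah--Rahman, Watson, quadratic transformation). Only small slips need fixing: the factor $(1+q^m)$ is already in the moment $\Lfun_\beta(z^{2m})=\tfrac{1+q^m}{2}\beta^m(\beta^{-1};q)_m/(\beta q;q)_m$ (the $m\leftrightarrow-m$ pairing only supplies the factor $2$), the $k$th term of the ${}_2\phi_1$ carries $(\beta q^{2n}t)^k$ with no extra $q^k$ (otherwise you would get $q^{k^2}$ instead of $q^{k(k-1)}$), and $\theta\downarrow0$ corresponds to $\beta\uparrow1$, not $\beta\downarrow1$.
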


\subsection{First proof: Rogers Fourier coefficients}

We give the direct calculation in full.  The finite $q$-binomial theorem gives
\[
 (Az;q)_n
 =\sum_{r=0}^n(-A)^rq^{\binom r2}\qbinom{n}{r}z^r.
\]
Hence
\begin{equation}\label{eq:doubleexp}
 (Az,A/z;q)_n
 =\sum_{r,s=0}^n
 (-A)^{r+s}q^{\binom r2+\binom s2}
 \qbinom{n}{r}\qbinom{n}{s}z^{r-s}.
\end{equation}

We use a symmetrized extension of the functional to Laurent
polynomials. For any Laurent polynomial $f(z)$, define
\[
 \widetilde{\Lfun}_\beta(f)
 =\Lfun_\beta\!\left(\frac{f(z)+f(z^{-1})}{2}\right),
 \qquad x=(z+z^{-1})/2.
\]
The argument on the right is a polynomial in $x$. We suppress the
tilde below. Thus $\Lfun_\beta(z^j)$ denotes the cosine moment
$\Lfun_\beta((z^j+z^{-j})/2)$, not the unsymmetrized integral
of $e^{ij\phi}$ on $[0,\pi]$. Symmetry gives
$\Lfun_\beta(z^{2m+1}+z^{-(2m+1)})=0$ and the odd moments
of this extension vanish. For example, the unsymmetrized arcsine
integral $\pi^{-1}\int_0^\pi e^{i\phi}\,d\phi=2i/\pi$ would not
vanish. For $m\ge0$,
\begin{equation}\label{eq:FourierMoment}
 \Lfun_\beta(z^{2m})
 =\frac{(-q,\beta^{-1};q)_m}{(-1,\beta q;q)_m}\beta^m
 =\frac{1+q^m}{2}
 \frac{\beta^m(\beta^{-1};q)_m}{(\beta q;q)_m}.
\end{equation}
Formula~\eqref{eq:FourierMoment} follows from the Fourier expansion of $C_m(x;\beta\mid q)$ and orthogonality.

Fix $0\le k\le n$.  In~\eqref{eq:doubleexp}, the coefficient of $A^{2k}$ comes from $r+s=2k$.  Pairing the terms $r=k+m$ and $r=k-m$, using
$\Lfun_\beta(z^{-2m})=\Lfun_\beta(z^{2m})$, and then applying~\eqref{eq:FourierMoment}, we obtain
\begin{align}
 &\frac{[A^{2k}]\Lfun_\beta[(Az,A/z;q)_n]}
 {q^{k(k-1)}\qbinom{n}{k}^{\!2}}\notag\\
 &\quad =1+\sum_{m\ge1}(1+q^m)
 \frac{\beta^m(\beta^{-1};q)_m}{(\beta q;q)_m}
 q^{m^2}
 \frac{\qbinom{n}{k+m}\qbinom{n}{k-m}}
 {\qbinom{n}{k}^{\!2}},
 \label{eq:coeffsum1}
\end{align}
where, as usual, Gaussian coefficients outside $0\le j\le n$ are zero.  The elementary ratio identity
\begin{equation}\label{eq:qbin-ratio}
 q^{m^2}
 \frac{\qbinom{n}{k+m}\qbinom{n}{k-m}}
 {\qbinom{n}{k}^{\!2}}
 =q^{(n+1)m}
 \frac{(q^{-k},q^{k-n};q)_m}
 {(q^{k+1},q^{n-k+1};q)_m}
\end{equation}
transforms~\eqref{eq:coeffsum1} into a terminating very-well-poised sum.

We use the terminating ${}_6\phi_5$ summation
(see, e.g., Gasper--Rahman~\cite{GasperRahman})
\begin{equation}\label{eq:6phi5}
 {}_6\phi_5\!\left[
 \begin{matrix}
 a,q\sqrt a,-q\sqrt a,b,c,q^{-N}\\
 \sqrt a,-\sqrt a,aq/b,aq/c,aq^{N+1}
 \end{matrix};q,\frac{aq^{N+1}}{bc}\right]
 =\frac{(aq,aq/bc;q)_N}{(aq/b,aq/c;q)_N}.
\end{equation}
Letting $a\to1$ in~\eqref{eq:6phi5}, the $m$th summand tends to $1$ for $m=0$, and for $m\ge1$ to
\[
 (1+q^m)
 \frac{(b,c,q^{-N};q)_m}
 {(q/b,q/c,q^{N+1};q)_m}
 \left(\frac{q^{N+1}}{bc}\right)^m.
\]
Now set
\[
 N=k,\qquad b=q^{k-n},\qquad c=\beta^{-1}.
\]
The left-hand side is exactly the sum in~\eqref{eq:coeffsum1} after~\eqref{eq:qbin-ratio}, while the product side becomes
\begin{align*}
 \frac{(q,\beta q^{n-k+1};q)_k}
 {(q^{n-k+1},\beta q;q)_k}
 &=\frac{(q;q)_k(q;q)_{n-k}}{(q;q)_n}
   \frac{(\beta q;q)_n}
   {(\beta q;q)_k(\beta q;q)_{n-k}}\\
 &=\frac{1}{\qbinom{n}{k}}
   \frac{(\beta q;q)_n}
   {(\beta q;q)_k(\beta q;q)_{n-k}}.
\end{align*}
Consequently
\[
 [A^{2k}]\Lfun_\beta[(Az,A/z;q)_n]
 =q^{k(k-1)}\qbinom{n}{k}
 \frac{(\beta q;q)_n}
 {(\beta q;q)_k(\beta q;q)_{n-k}}.
\]
Putting $A=-\sqrt t$ proves~\eqref{eq:mainmoment} for
$\theta>0$. Both sides, coefficient by coefficient, are rational
functions of $\beta$, regular for $0<\beta<q^{-1}$ when interpreted
through \eqref{eq:FourierMoment}. Equality therefore continues to
$\theta>-1$, including the limiting value $\theta=0$.

\subsection{Second proof: Askey--Wilson and quadratic transformations}
\label{subsec:second-proof}

We give the parameter substitutions explicitly. Initially
$0<q<1$, $0<\beta<1$, $\rho=\sqrt\beta>0$, and $A$ is generic
and sufficiently small. Apparent singularities of intermediate
terminating expressions are dealt with by taking limits after
combining their prefactors.

\paragraph{The normalized basis integral.}
The Rogers measure is the Askey--Wilson measure with parameters
\begin{equation}\label{eq:AWparams}
 (a,b,c,d)=(\rho,-\rho,\rho q^{1/2},-\rho q^{1/2}),
 \qquad \rho^2=\beta.
\end{equation}
Indeed,
\[
 (\rho z,-\rho z,\rho q^{1/2}z,-\rho q^{1/2}z;q)_\infty
 =(\beta z^2;q)_\infty.
\]
The same identity holds with $z^{-1}$ in place of $z$.
Let $\Lfun_{a,b,c,d}$ denote the Askey--Wilson integral divided by
its value at $1$. The normalized basis integral used in the proof
of Kim--Stanton \cite[Theorem~2.10]{KimStanton} is
\begin{align}
 \Lfun_{a,b,c,d}[(Az,A/z;q)_n]
 &=\frac{(aA,bA,cA,dA;q)_n}{(A^2,abcd;q)_n}\notag\\
 &\quad\times
 {}_8W_7\!\left(A^2/q;A/a,A/b,A/c,A/d,q^{-n};
 q,abcdq^n\right).
 \label{eq:NRbasis}
\end{align}
This is the terminating specialization of the Nasrallah--Rahman
integral, obtained by writing
\[
 (Az,A/z;q)_n=\frac{(Az,A/z;q)_\infty}
 {(Aq^nz,Aq^n/z;q)_\infty}.
\]
In particular the fifth denominator parameter is $Aq^n$, the
numerator parameter is $A$, and the normalizing integral is the
four-parameter Askey--Wilson integral. This specifies the
normalization in \eqref{eq:NRbasis}; there is no additional factor.

Under \eqref{eq:AWparams}, $abcd=\beta^2q$ and
$(aA,bA,cA,dA;q)_n=(A^2\beta;q)_{2n}$. Hence
\begin{align}
 I_n(A):=\Lfun_\beta[(Az,A/z;q)_n]
 &=\frac{(A^2\beta;q)_{2n}}{(A^2,\beta^2q;q)_n}\notag\\
 &\quad\times{}_8W_7\!\left(
 A^2/q;A/\rho,-A/\rho,A/(\rho\sqrt q),-A/(\rho\sqrt q),q^{-n};
 q,\beta^2q^{n+1}\right).
 \label{eq:NRRogers8W7}
\end{align}

\paragraph{First Watson transformation.}
We use the terminating transformation
\cite[Eq.~17.9.15]{DLMF}
\begin{align}
 &{}_8W_7\!\left(u;v,w,d,e,q^{-n};
 q,\frac{u^2q^{n+2}}{vwde}\right)\notag\\
 &\quad=
 \frac{(uq,uq/(de);q)_n}{(uq/d,uq/e;q)_n}
 {}_4\phi_3\!\left(
 \begin{matrix}q^{-n},uq/(vw),d,e\\
 uq/v,uq/w,deq^{-n}/u\end{matrix};q,q\right).
 \label{eq:Watson-audit}
\end{align}
For \eqref{eq:NRRogers8W7}, set
\[
 u=A^2/q,\quad (v,w)=(A/\rho,-A/\rho),\quad
 (d,e)=(A/(\rho\sqrt q),-A/(\rho\sqrt q)).
\]
The transformed parameters and argument are
\begin{gather*}
 \frac{uq}{vw}=-\beta,\qquad
 (uq/v,uq/w)=(A\rho,-A\rho),\qquad
 \frac{deq^{-n}}u=-\beta^{-1}q^{-n},\\
 \frac{u^2q^{n+2}}{vwde}=\beta^2q^{n+1}.
\end{gather*}
Define the common terminating series
\[
 S_n(A)={}_4\phi_3\!\left(
 \begin{matrix}q^{-n},-\beta,A/(\rho\sqrt q),-A/(\rho\sqrt q)\\
 A\rho,-A\rho,-\beta^{-1}q^{-n}
 \end{matrix};q,q\right).
\]
Watson's prefactor is
$(A^2,-\beta q;q)_n/(A\rho\sqrt q,-A\rho\sqrt q;q)_n$.
Since
\[
 (A\rho\sqrt q,-A\rho\sqrt q;q)_n=(A^2\beta q;q^2)_n,
 \qquad
 (A^2\beta;q)_{2n}=(A^2\beta;q^2)_n(A^2\beta q;q^2)_n,
\]
we obtain
\begin{equation}\label{eq:WatsonRogers4phi3}
 I_n(A)=R_n(A)S_n(A),\qquad
 R_n(A)=\frac{(A^2\beta;q^2)_n(-\beta q;q)_n}
 {(\beta^2q;q)_n}.
\end{equation}

\paragraph{The quadratic transformation and its finite prefactor.}
In $W$-notation, Cohl--Costas-Santos--Wakhare
\cite[Theorem~5.6]{CohlCostasSantosWakhare} gives
\begin{align}
 {}_2\phi_1\!\left(
 \begin{matrix}a,b\\qa/b\end{matrix};q,q\tau^2\right)
 &=
 \frac{(qa^2\tau^2,qa\tau/b,q\tau;q)_\infty}
 {(qa^2\tau/b,qa\tau,q\tau^2;q)_\infty}\notag\\
 &\quad \times{}_8W_7\!\left(
 a^2\tau/b;\sqrt q\,a/b,-\sqrt q\,a/b,-a/b,b\tau,a;
 q,q\tau\right).
 \label{eq:CCW-audit}
\end{align}
It initially requires $|q\tau|<1$ and $|q\tau^2|<1$, with generic
parameters avoiding poles. Substitute
\[
 a=q^{-n},\qquad b=\beta^{-1}q^{-n},\qquad
 \tau=A\rho q^{n-1/2}.
\]
Then $qa/b=\beta q$ and $q\tau^2=A^2\beta q^{2n}$.
The infinite-product prefactor reduces exactly to
\begin{align}
 G_n(A)
 &=\frac{(A^2\beta,A\rho^3q^{n+1/2},
                A\rho q^{n+1/2};q)_\infty}
 {(A\rho^3q^{1/2},A\rho q^{1/2},
                A^2\beta q^{2n};q)_\infty}\notag\\
 &=\frac{(A^2\beta;q)_{2n}}
 {(A\rho^3\sqrt q,A\rho\sqrt q;q)_n}.
 \label{eq:CCW-finite-prefactor}
\end{align}
The transformed series is
\[
 {}_8W_7\!\left(
 A\rho^3/\sqrt q;\beta\sqrt q,-\beta\sqrt q,-\beta,
 A/(\rho\sqrt q),q^{-n};q,A\rho q^{n+1/2}\right).
\]

\paragraph{Second use of Watson's transformation.}
Apply \eqref{eq:Watson-audit} with
\[
 u=A\rho^3/\sqrt q,\quad (v,w)=(\beta\sqrt q,-\beta\sqrt q),
 \quad(d,e)=(-\beta,A/(\rho\sqrt q)).
\]
In this case,
\begin{gather*}
 \frac{uq}{vw}=-A/(\rho\sqrt q),\qquad
 (uq/v,uq/w)=(A\rho,-A\rho),\qquad
 \frac{deq^{-n}}u=-\beta^{-1}q^{-n},\\
 \frac{u^2q^{n+2}}{vwde}=A\rho q^{n+1/2}=q\tau.
\end{gather*}
Thus the resulting series is again $S_n(A)$, up to permutation
of its numerator parameters. The new Watson prefactor is
\[
 T_n(A)=
 \frac{(A\rho^3\sqrt q,-\beta q;q)_n}
 {(-A\rho\sqrt q,\beta^2q;q)_n}.
\]
All cancellations are now finite:
\begin{align}
 G_n(A)T_n(A)
 &=\frac{(A^2\beta;q)_{2n}(-\beta q;q)_n}
 {(A\rho\sqrt q,-A\rho\sqrt q,\beta^2q;q)_n}\notag\\
 &=\frac{(A^2\beta;q^2)_n(-\beta q;q)_n}
 {(\beta^2q;q)_n}=R_n(A).
 \label{eq:prefactor-audit}
\end{align}
Consequently \eqref{eq:CCW-audit} becomes
\begin{equation}\label{eq:quadratic4phi3to2phi1}
 R_n(A)S_n(A)
 ={}_2\phi_1\!\left(
 \begin{matrix}q^{-n},\beta^{-1}q^{-n}\\\beta q\end{matrix};
 q,\beta q^{2n}A^2\right).
\end{equation}
Together with \eqref{eq:WatsonRogers4phi3}, this proves
\eqref{eq:2phi1} after setting $A=-\sqrt t$, independently of the
Fourier calculation.

\paragraph{Domains and removable singularities.}
For fixed $0<\beta<1$, all the preceding steps hold for generic
$A$ in a sufficiently small neighborhood of zero. The two sides of
the resulting identity for $I_n(A)$ are polynomials in $A$:
on the left this follows from the finite product under the integral,
and on the right from termination of the series. The identity
therefore holds for every complex $A$. In particular, zeros of
individual denominators in intermediate formulas do not exclude
values of $A$ from the final polynomial identity.

Coefficientwise in $A$, the final expression is rational in
$\beta$ and is regular for $0<\beta<q^{-1}$, since its only relevant
denominators are $(\beta q;q)_k$. It therefore extends to the
algebraically continued functional for all $\theta>-1$ when
$\beta=q^\theta$. At values where $(\beta^2q;q)_n=0$, one must
combine $R_nS_n$ before taking the limit, rather than substituting
into the two factors separately. The endpoint $\theta=0$ agrees
with the limiting positive arcsine functional. For $-1<\theta<0$
this continuation alone makes no assertion about a positive measure.

\section{The two Narayana endpoints}

The endpoint specializations are immediate.

\begin{corollary}[Type $B$ endpoint]\label{cor:typeB}
For $\theta=0$,
\begin{equation}\label{eq:typeBq}
 K_{n,k}^{(0)}(q)
 =q^{k(k-1)}\qbinom{n}{k}^2,
\end{equation}
and therefore
\[
 P_n^{(0)}(t;q)
 =\sum_{k=0}^nq^{k(k-1)}\qbinom{n}{k}^2t^k.
\]
\end{corollary}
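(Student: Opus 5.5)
The plan is to specialize the algebraic kernel \eqref{eq:qkernel-intro} at $\theta=0$ and then invoke Theorem~\ref{thm:moment} for the moment interpretation. At $\theta=0$ we have $q^{\theta+1}=q$, so the ratio becomes
\[
 \frac{(q;q)_n}{(q;q)_k(q;q)_{n-k}}=\qbinom nk .
\]
Multiplying by the prefactor $q^{k(k-1)}\qbinom nk$ gives $K_{n,k}^{(0)}(q)=q^{k(k-1)}\qbinom nk^2$, which is \eqref{eq:typeBq}.

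The displayed formula for $P_n^{(0)}(t;q)$ then follows from the definition $P_n^{(\theta)}(t;q)=\sum_k K_{n,k}^{(\theta)}(q)t^k$, valid for all $\theta>-1$ and in particular at $\theta=0$.

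To connect with the moment side, use Theorem~\ref{thm:moment} at $\theta=0$. The limiting arcsine functional applied to $(-\sqrt t\,z,-\sqrt t/z;q)_n$ yields this polynomial. One can see this either from the limit $\beta\to1^-$ in \eqref{eq:FourierMoment}, or directly: at $\beta=1$ the factor $(\beta^{-1};q)_m$ vanishes for $m\ge1$. Hence only the $m=0$ term survives in \eqref{eq:coeffsum1}, giving $q^{k(k-1)}\qbinom nk^2$ immediately, with no ${}_6\phi_5$ summation needed.

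I expect no real obstacle. The only point requiring care is that $\theta=0$ is the boundary value, where the Rogers measure degenerates to the arcsine measure. Either the continuity argument already given in Theorem~\ref{thm:moment} covers it, or the vanishing of $(1;q)_m$ for $m\ge1$ gives it directly, since the arcsine cosine moments $\Lfun(z^{2m})$ vanish for $m\ge1$.
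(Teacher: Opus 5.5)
Your proof is correct and matches the paper, which treats this corollary as an immediate specialization of the kernel \eqref{eq:qkernel-intro} at $q^{\theta+1}=q$. Your extra moment-side check (at $\beta=1$ the cosine moments $\Lfun_1(z^{2m})$ vanish for $m\ge1$, so only the $m=0$ term of \eqref{eq:coeffsum1} survives) is correct and avoids the ${}_6\phi_5$ summation, but it is not needed for the statement as written.
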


\begin{corollary}[Shifted type $A$ endpoint]\label{cor:typeA}
For $\theta=1$,
\begin{equation}\label{eq:typeAq}
 K_{n,k}^{(1)}(q)
 =q^{k(k-1)}
 \frac{[n+1]_q}{[k+1]_q[n-k+1]_q}
 \qbinom{n}{k}^2.
\end{equation}
\end{corollary}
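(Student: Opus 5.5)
Corollary~\ref{cor:typeA} is a direct specialization of the coefficient formula~\eqref{eq:mainmoment} in Theorem~\ref{thm:moment}, so the plan is simply to set $\theta=1$ in
\[
K_{n,k}^{(\theta)}(q)=q^{k(k-1)}\qbinom nk\frac{(q^{\theta+1};q)_n}{(q^{\theta+1};q)_k(q^{\theta+1};q)_{n-k}}
\]
and rewrite the $q$-shifted factorials in base $q^2$ through $q$-integers. Since $\theta=1>0$, the positive Rogers integral representation applies directly; no continuation is needed.

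The key identity is $(q^2;q)_m=(q;q)_{m+1}/(1-q)$, which gives
\[
\frac{(q^2;q)_n}{(q^2;q)_k(q^2;q)_{n-k}}
=(1-q)\frac{(q;q)_{n+1}}{(q;q)_{k+1}(q;q)_{n-k+1}}.
\]
Next I would factor $(q;q)_{j+1}=(q;q)_j(1-q^{j+1})$ in each of the three places and divide each factor $1-q^{j+1}$ by $1-q$ to form $[j+1]_q$. The powers of $1-q$ balance, since one factor appears in the numerator and two in the denominator, together with the prefactor $(1-q)$. This turns the right-hand side into
\[
\frac{[n+1]_q}{[k+1]_q[n-k+1]_q}\qbinom nk,
\]
and multiplying by the remaining $q^{k(k-1)}\qbinom nk$ gives~\eqref{eq:typeAq}.

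There is no real obstacle here. The only point requiring care is the bookkeeping of the powers of $(1-q)$. I would also note that $[m]_q=(1-q^m)/(1-q)$ is the standard $q$-integer, which the paper uses without defining separately.
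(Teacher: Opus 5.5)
Your proposal is correct and matches the paper, which simply calls the endpoint specializations immediate from the kernel formula. Your identity $(q^2;q)_m=(q;q)_{m+1}/(1-q)$ is the $r=1$ case of $(q^{r+1};q)_m=(q;q)_{m+r}/(q;q)_r$ used in Proposition~\ref{prop:gappedHoggatt}, and you do not need to invoke the Rogers integral at all, since $K_{n,k}^{(\theta)}(q)$ is defined algebraically by \eqref{eq:qkernel-intro}.
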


As $q\to1^-$, Theorem~\ref{thm:moment} tends to the classical interpolation~\eqref{eq:classical-kernel}.

\section{Combinatorial context: \texorpdfstring{$q$}{q}-Hoggatt numbers and Schur positivity}\label{sec:hoggatt}

The endpoint in Corollary~\ref{cor:typeA} belongs to a well-developed combinatorial theory.
Following Cigler~\cite{CiglerHoggatt}, and using the normalization adopted by Chern and
Shi~\cite{ChernShiHoggatt}, define the $q$-analogue of the $d$-Hoggatt numbers by
\begin{equation}\label{eq:qHoggatt}
 \genfrac{\langle}{\rangle}{0pt}{}{n}{k}_q^{(d)}
 :=q^{d\binom{k}{2}}
 \prod_{j=0}^{d-1}
 \frac{\qbinom{n+j}{k}}{\qbinom{k+j}{k}},
 \qquad 0\le k\le n.
\end{equation}
For $q\to1^-$ this tends to the ordinary $d$-Hoggatt number.  The cases
$d=1,2,3$ correspond respectively to the Pascal, Narayana and Baxter triangles.

\begin{proposition}[The Hoggatt endpoint and integral shifts]\label{prop:gappedHoggatt}
For every integer $r\ge0$,
\begin{equation}\label{eq:gappedHoggatt}
 K_{n,k}^{(r)}(q)
 =q^{k(k-1)}\qbinom{n}{k}
 \frac{\qbinom{n+r}{k}}{\qbinom{k+r}{k}}.
\end{equation}
In particular,
\begin{equation}\label{eq:HoggattEndpoint}
 K_{n,k}^{(1)}(q)
 =\genfrac{\langle}{\rangle}{0pt}{}{n}{k}_q^{(2)}.
\end{equation}
\end{proposition}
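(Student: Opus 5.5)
This is a direct algebraic verification from the definition \eqref{eq:qkernel-intro}, with $\theta=r$ a nonnegative integer. The plan is to rewrite the $q$-Pochhammer ratio as a ratio of Gaussian coefficients and then compare with \eqref{eq:qHoggatt} at $d=2$.

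First, since $(q^{r+1};q)_m=(q;q)_{m+r}/(q;q)_r$ for every $m\ge0$,
\[
 \frac{(q^{r+1};q)_n}{(q^{r+1};q)_k(q^{r+1};q)_{n-k}}
 =\frac{(q;q)_{n+r}\,(q;q)_r}{(q;q)_{k+r}\,(q;q)_{n-k+r}}.
\]
Next, expand the right-hand side of \eqref{eq:gappedHoggatt} by definition:
\[
 \frac{\qbinom{n+r}{k}}{\qbinom{k+r}{k}}
 =\frac{(q;q)_{n+r}}{(q;q)_k(q;q)_{n+r-k}}\cdot\frac{(q;q)_k(q;q)_r}{(q;q)_{k+r}}
 =\frac{(q;q)_{n+r}(q;q)_r}{(q;q)_{n-k+r}(q;q)_{k+r}}.
\]
This agrees with the previous display, and multiplying by $q^{k(k-1)}\qbinom nk$ gives \eqref{eq:gappedHoggatt}. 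The case $r=0$ reduces consistently to Corollary~\ref{cor:typeB}.

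For \eqref{eq:HoggattEndpoint}, set $r=1$ and $d=2$ in \eqref{eq:qHoggatt}. The $j=0$ factor is $\qbinom nk/\qbinom kk=\qbinom nk$. The $j=1$ factor is $\qbinom{n+1}{k}/\qbinom{k+1}{k}$, which is the $r=1$ ratio above. The prefactor is $q^{2\binom k2}=q^{k(k-1)}$, which matches.

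There is no real obstacle here. The only points needing care are the index bookkeeping $n+r-k=n-k+r$ and checking that the Hoggatt normalization $q^{d\binom k2}$ with $d=2$ reproduces exactly the factor $q^{k(k-1)}$ in the kernel.
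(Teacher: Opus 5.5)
Your proposal is correct and follows the paper's proof essentially step for step: you rewrite the Pochhammer ratio via $(q^{r+1};q)_m=(q;q)_{m+r}/(q;q)_r$, identify it with $\qbinom{n+r}{k}/\qbinom{k+r}{k}$, and then read off the $d=2$ case of \eqref{eq:qHoggatt}. Your explicit checks that $\qbinom{k}{k}=1$ and $q^{2\binom k2}=q^{k(k-1)}$ are exactly the bookkeeping the paper leaves implicit.
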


\begin{proof}
Using $(q^{r+1};q)_m=(q;q)_{m+r}/(q;q)_r$, one obtains
\[
 \frac{(q^{r+1};q)_n}
 {(q^{r+1};q)_k(q^{r+1};q)_{n-k}}
 =\frac{(q;q)_{n+r}(q;q)_r}
 {(q;q)_{k+r}(q;q)_{n-k+r}}
 =\frac{\qbinom{n+r}{k}}{\qbinom{k+r}{k}}.
\]
Substitution in~\eqref{eq:qkernel-intro} proves~\eqref{eq:gappedHoggatt}.
For $r=1$, formula~\eqref{eq:qHoggatt} with $d=2$ gives
\[
 \genfrac{\langle}{\rangle}{0pt}{}{n}{k}_q^{(2)}
 =q^{k(k-1)}\qbinom{n}{k}
 \frac{\qbinom{n+1}{k}}{\qbinom{k+1}{k}},
\]
which is~\eqref{eq:gappedHoggatt} at $r=1$.
\end{proof}

Thus there are two complementary deformations through the common Narayana point.  The
Hoggatt hierarchy varies the number $d$ of consecutive Gaussian-binomial factors in
\eqref{eq:qHoggatt}, while our interpolation keeps two such factors and varies their relative
shift.  At integral $\theta=r$, equation~\eqref{eq:gappedHoggatt} makes this ``gapped
two-factor'' structure explicit; the parameter $\theta$ then analytically continues the gap
away from the integers.  At $q=1$, the resulting hypergeometric rows are precisely the
integer members of the Narayana family studied by Mao and Wang~\cite{MaoWangNarayana},
who proved a real-rootedness-preserving basis-transformation theorem for them.

The recent combinatorial positivity results for $q$-Hoggatt numbers are especially relevant at
our endpoint $\theta=1$.  Liu and Mao~\cite{LiuMao} obtained Schur-positive identities in their
study of Baxter polynomials and proved row- and column-wise $q$-log-concavity for the
$q$-analogues of the $d$-Hoggatt numbers.  Chern and Shi~\cite{ChernShiHoggatt} subsequently
strengthened these statements to \emph{strong} $q$-log-concavity, for both fixed rows and fixed
columns, using the theory of Schur functions.  In a separate paper they proved Cigler's
conjecture on certain polynomials arising from $q$-Hoggatt numbers: nonnegativity follows from
Stanley's $(P,\omega)$-partition theory and a standard Young-tableau formula, while
palindromicity is obtained by an involution on rectangular standard Young tableaux
\cite{ChernShiCigler}.

These results and Theorem~\ref{thm:TP} are transverse rather than redundant.  The
Schur-theoretic results are coefficientwise statements in the formal variable $q$, but concern
specific families of $2\times2$ inequalities.  Our theorem gives nonnegativity of \emph{all}
minors of the complete lower-triangular matrix after a numerical specialization $0<q<1$, and
it holds throughout the continuous range $\theta>-1$.  At $\theta=1$ the two theories meet on
exactly the same $d=2$ $q$-Hoggatt kernel.  This meeting point strongly suggests that a
Schur-theoretic refinement of our total-positivity theorem may exist.

There is a parallel comparison at $q=1$.  Mao and Shi~\cite{MaoShiHoggatt} proved total
positivity of every ordinary $d$-Hoggatt triangle, together with infinite log-concavity of its
row and column sequences and asymptotic normality of its rows.  Thus their $d=2$ total
positivity is recovered from Theorem~\ref{thm:TP} by the joint specialization
$\theta=1$, $q\to1^-$.  Fang, Zhang and Zhao~\cite{FangZhangZhao} further proved that the
ordinary Hoggatt row polynomials form a Sturm sequence for $d\ge2$.  Consequently the
interlacing question raised later in this paper is already known at the classical Hoggatt point
$(\theta,q)=(1,1)$; our problem asks for a basic-hypergeometric and continuous-$\theta$
lift of that phenomenon.

\begin{remark}[A combinatorial target]\label{rem:flaggedSchur}
Formula~\eqref{eq:gappedHoggatt} suggests seeking, for every integer $r\ge0$, a flagged,
skew, or factorial Schur model whose principal specialization equals
$K_{n,k}^{(r)}(q)$.  The case $r=1$ is the $d=2$ $q$-Hoggatt/Narayana specialization already
accessible to Schur-function methods, whereas $r=0$ gives the type $B$ kernel
$q^{k(k-1)}\qbinom{n}{k}^2$.  Such a model would provide a natural route toward
coefficientwise total positivity of the interpolating matrix and a combinatorial explanation of
the analytic continuation in $\theta$.
\end{remark}

\section{Real-rootedness and \texorpdfstring{$\gamma$}{gamma}-positivity}\label{sec:realroot}

\subsection{Ordinary \texorpdfstring{$\gamma$}{gamma}-positivity after normalization}

The explicit kernel has a simple reciprocity.  From~\eqref{eq:qkernel-intro},
\[
 \frac{K_{n,n-k}^{(\theta)}(q)}{K_{n,k}^{(\theta)}(q)}
 =q^{(n-1)(n-2k)}.
\]
Consequently
\begin{equation}\label{eq:reciprocityP}
 P_n^{(\theta)}(t;q)
 =q^{n(n-1)}t^n
 P_n^{(\theta)}\!\left(q^{-2(n-1)}t^{-1};q\right).
\end{equation}
It is therefore natural to put
\begin{equation}\label{eq:Phat}
 \widehat P_n^{(\theta)}(t;q)
 :=P_n^{(\theta)}(q^{-(n-1)}t;q).
\end{equation}
Then
\begin{equation}\label{eq:palindromic}
 \widehat P_n^{(\theta)}(t;q)
 =t^n\widehat P_n^{(\theta)}(t^{-1};q).
\end{equation}

The hypergeometric representation in Theorem~\ref{thm:moment} identifies this
palindromic polynomial with a continuous $q$-ultraspherical polynomial in a
parameter range which is not the usual orthogonality range.

\begin{proposition}[Rogers-polynomial representation]\label{prop:RogersRow}
Let
\[
 \eta=q^{-n-\theta}.
\]
Then
\begin{equation}\label{eq:RogersRow}
 \widehat P_n^{(\theta)}(t;q)
 =\frac{(q;q)_n}{(\eta;q)_n}\,
 t^{n/2}
 C_n\!\left(\frac{t^{1/2}+t^{-1/2}}{2};\eta\mid q\right).
\end{equation}
The identity is algebraic in $t$ and does not depend on a choice of square
root.
\end{proposition}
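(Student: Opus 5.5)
The plan is to compare coefficients directly, using the finite Laurent expansion \eqref{ar:eq:rogers}, which holds as a rational identity in the parameter $\beta$. We apply it with $\beta=\eta=q^{-n-\theta}$ and substitute $e^{i\phi}=t^{1/2}$; this is exactly the manipulation in the proof of Lemma~\ref{ar:lem:E}. It gives
\[
 \frac{(q;q)_n}{(\eta;q)_n}\,t^{n/2}C_n\!\left(\frac{t^{1/2}+t^{-1/2}}{2};\eta\mid q\right)
 =\frac{(q;q)_n}{(\eta;q)_n}\sum_{k=0}^n\frac{(\eta;q)_k(\eta;q)_{n-k}}{(q;q)_k(q;q)_{n-k}}\,t^{k}.
\]
Only integral powers $t^k$ occur, because $t^{n/2}\cdot t^{(n-2k)/2}$ is replaced via the symmetry $k\leftrightarrow n-k$. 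This already shows that the right side of \eqref{eq:RogersRow} is a polynomial in $t$, independent of the branch of $t^{1/2}$.

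First we check that the normalization is legitimate. Here $(\eta;q)_j=\prod_{i<j}(1-q^{-n-\theta+i})$, and its factors vanish only if $\theta+n-i=0$ for some $0\le i<n$. That would require $\theta\le -1$, which the range $\theta>-1$ excludes. So every quotient is finite.

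Next we compute the coefficient of $t^k$ on the left side. By \eqref{eq:Phat} and \eqref{eq:qkernel-intro}, it equals
\[
q^{-(n-1)k}q^{k(k-1)}\qbinom nk\frac{(q^{\theta+1};q)_n}{(q^{\theta+1};q)_k(q^{\theta+1};q)_{n-k}}.
\]
The key step is the reversal identity
\[
(q^{-n-\theta};q)_j=(-1)^jq^{-j(n+\theta)+\binom j2}(q^{n+\theta-j+1};q)_j .
\]
We will express $(q^{n+\theta-j+1};q)_j$ as $(q^{\theta+1};q)_n/(q^{\theta+1};q)_{n-j}$. Taking the ratio $(\eta;q)_k(\eta;q)_{n-k}/(\eta;q)_n$, the Pochhammer factors recombine into
\[
\frac{(q^{\theta+1};q)_n}{(q^{\theta+1};q)_k(q^{\theta+1};q)_{n-k}}.
\]
The signs $(-1)^{k+(n-k)-n}$ cancel. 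The factors $q^{-j(n+\theta)}$ give $q^{0}$, because the exponents $k$ and $n-k$ sum to $n$; in particular all $\theta$-dependence in the power of $q$ disappears.

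What remains is the power
\[
q^{\binom k2+\binom{n-k}2-\binom n2}=q^{-k(n-k)}.
\]
Multiplied by the Gaussian factor $\qbinom nk$, this must match the target power $q^{k(k-1)-(n-1)k}=q^{-k(n-k)}$. It does, which proves \eqref{eq:RogersRow} coefficientwise.

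The main obstacle is the careful bookkeeping in this reversal step. One must track the $\theta$-dependent exponents, and confirm that the quotient $(\eta;q)_{n}$ matches the $(q^{\theta+1};q)_n$ numerator rather than producing a spurious shift by one. As a check I would verify $n=1$ and $n=2$ by hand.
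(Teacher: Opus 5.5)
Your argument is correct, but it is organized differently from the paper's. The paper does no coefficient bookkeeping. It quotes the standard representation $C_n(\cos\phi;\eta\mid q)=\frac{(\eta;q)_n}{(q;q)_n}e^{in\phi}\,{}_2\phi_1(q^{-n},\eta;\eta^{-1}q^{1-n};q,\eta^{-1}qe^{-2i\phi})$. It then observes that $\eta=q^{-n-\theta}$ turns the lower parameter into $q^{\theta+1}$ and the argument into $q^{n+\theta+1}e^{-2i\phi}$. Finally it matches the result with the hypergeometric form \eqref{eq:2phi1} of $P_n^{(\theta)}$ after the rescaling \eqref{eq:Phat}, putting $t=e^{-2i\phi}$.

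You instead start from the Laurent expansion \eqref{ar:eq:rogers} and the defining kernel \eqref{eq:qkernel-intro}, and you perform the reversal $(q^{-n-\theta};q)_j=(-1)^jq^{-j(n+\theta)+\binom j2}(q^{\theta+1};q)_n/(q^{\theta+1};q)_{n-j}$ yourself. The signs, the cancellation of the $\theta$-dependent exponents, and the residual power $q^{-k(n-k)}$ all check out.

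The two proofs rest on the same manipulation: both the ${}_2\phi_1$ form of $C_n$ and the equivalence of \eqref{eq:mainmoment} with \eqref{eq:2phi1} come down to exactly this reversal. The paper's version is shorter and explains why $\eta=q^{-n-\theta}$ is the right parameter: it is the value making $\eta^{-1}q^{1-n}=q^{\theta+1}$. Yours is self-contained, needs only the algebraic definition of $P_n^{(\theta)}$ valid for all $\theta>-1$, and makes explicit that $(\eta;q)_n\neq0$ in this range.

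As you noticed, your intermediate expression is precisely \eqref{ar:eq:E} of Lemma~\ref{ar:lem:E} with $\beta$ replaced by $\eta$. So the proposition identifies $\widehat P_n^{(\theta)}$ with the leading continuant $E_n$ of Section~\ref{sec:affine-Stieltjes} continued to this non-standard parameter, a link the paper does not spell out.

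One minor expository point: integrality of the exponents needs no symmetry, since $(t^{1/2})^n(t^{1/2})^{n-2k}=t^{n-k}$ for any fixed branch. The substitution $k\leftrightarrow n-k$ only reindexes the sum.
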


\begin{proof}
The standard ${}_2\phi_1$ representation of the Rogers polynomials is
\[
 C_n(\cos\phi;\eta\mid q)
 =\frac{(\eta;q)_n}{(q;q)_n}e^{in\phi}
 {}_2\phi_1\!\left(
 \begin{matrix}q^{-n},\eta\\ \eta^{-1}q^{1-n}\end{matrix};
 q,\eta^{-1}qe^{-2i\phi}\right).
\]
For $\eta=q^{-n-\theta}$ we have
\[
 \eta^{-1}q^{1-n}=q^{\theta+1},
 \qquad
 \eta^{-1}q=q^{n+\theta+1}.
\]
Putting $t=e^{-2i\phi}$ and comparing with~\eqref{eq:2phi1} after the
rescaling~\eqref{eq:Phat} gives~\eqref{eq:RogersRow}.
\end{proof}

\begin{theorem}[Real-rootedness]\label{thm:rowRR}
Let $0<q<1$ and $\theta>-1$.  For every $n\ge1$, the polynomial
$P_n^{(\theta)}(t;q)$ has $n$ simple negative zeros.  Equivalently,
$\widehat P_n^{(\theta)}(t;q)$ has $n$ simple negative zeros, occurring in
reciprocal pairs, with the additional zero $-1$ when $n$ is odd.
\end{theorem}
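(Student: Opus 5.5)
The plan is to use Proposition~\ref{prop:RogersRow} and turn real-rootedness of $\widehat P_n^{(\theta)}$ into real-rootedness of a rotated Rogers polynomial. That polynomial obeys a three-term recurrence whose coefficients have the signs of an orthogonal-polynomial (Favard) recurrence, even though $\eta=q^{-n-\theta}$ lies outside the usual orthogonality range. Since $\widehat P_n^{(\theta)}(t)=P_n^{(\theta)}(q^{-(n-1)}t)$ is a positive rescaling, the two formulations in the theorem are equivalent.

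\emph{Substitution.} On the negative axis put $t=-s^2$ with $s>0$ and $t^{1/2}=is$. Then the argument of $C_n$ in \eqref{eq:RogersRow} is
\[
x=\tfrac{1}{2}\bigl(t^{1/2}+t^{-1/2}\bigr)=iy,\qquad y=\tfrac{1}{2}\bigl(s-s^{-1}\bigr).
\]
The map $s\mapsto y$ is an increasing bijection $(0,\infty)\to\mathbb R$. Reversing $s\mapsto 1/s$ corresponds to $y\mapsto -y$, and $y=0$ corresponds to $t=-1$.

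Define $D_m(y)=i^{-m}C_m(iy;\eta\mid q)$. This is a real polynomial of degree $m$ with parity $(-1)^m$. The Rogers recurrence
\[
2x(1-\eta q^m)C_m=(1-q^{m+1})C_{m+1}+(1-\eta^2q^{m-1})C_{m-1}
\]
becomes
\[
D_{m+1}=\lambda_m\,y\,D_m+\mu_m D_{m-1},\qquad
\lambda_m=\frac{2(1-\eta q^m)}{1-q^{m+1}},\quad
\mu_m=\frac{1-\eta^2q^{m-1}}{1-q^{m+1}}.
\]

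\emph{Signs of the coefficients.} For $0\le m\le n-1$ we have $\eta q^m=q^{m-n-\theta}>1$, because $m-n-\theta<-1-\theta<0$. Hence every $\lambda_m<0$. For $1\le m\le n-1$,
\[
m-1-2n-2\theta<-n<0,
\]
so $\eta^2q^{m-1}>1$ and every $\mu_m<0$. We only need the recurrence up to index $n$, where the fixed parameter $\eta$ is used.

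\emph{Interlacing induction.} Write the recurrence as $D_{m+1}=\lambda_m yD_m-|\mu_m|D_{m-1}$, and note $\lambda_m\lambda_{m-1}>0$. The classical Sturm argument then shows that $D_1,\dots,D_n$ have real simple zeros, with those of $D_m$ strictly interlacing those of $D_{m+1}$. To run it, evaluate $D_{m+1}$ at consecutive zeros of $D_m$: the sign there is $-\operatorname{sgn}(D_{m-1})$, which alternates by the induction hypothesis. At the ends, compare leading coefficients, which are $\prod\lambda_j$ times $D_0=1$.

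This induction is the main step. The point needing care is that the sign pattern ($\lambda_m$ negative but of constant sign, $\mu_m$ negative) really fits the Favard/Sturm template. Flipping the sign of $y$ at each step, equivalently replacing $D_m$ by $(-1)^mD_m$, reduces it to the standard case of positive leading factors and a negative $D_{m-1}$ coefficient.

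\emph{Translating back to $t$.} The zeros of $D_n$ are $n$ distinct real numbers, symmetric under $y\mapsto-y$, and include $0$ exactly when $n$ is odd. Each gives a unique $s>0$, hence a negative $t=-s^2$. The pair $\pm y$ gives the reciprocal pair $t,1/t$, and $y=0$ gives $t=-1$.

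By \eqref{eq:RogersRow}, these $t$ are zeros of $\widehat P_n^{(\theta)}$. The prefactor $(q;q)_n/(\eta;q)_n$ is nonzero and finite, and $t^{n/2}\neq0$ at these points. So $\widehat P_n^{(\theta)}$ has at least $n$ distinct negative zeros. It has degree exactly $n$, since its top coefficient is a positive multiple of $K_{n,n}^{(\theta)}(q)>0$. Hence these are all of its zeros, and they are simple.

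Finally, the rescaling $t\mapsto q^{n-1}t$ transfers the statement to $P_n^{(\theta)}$.
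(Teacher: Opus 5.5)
Your proposal is correct and is essentially the paper's proof. The paper works with $E_m(y)=i^mC_m(iy;\eta\mid q)=(-1)^mD_m(y)$, which is exactly your sign flip; it obtains a Favard recurrence with positive coefficients from $\eta q^m>1$ and $\eta^2q^{m-1}>1$, and then sends the symmetric real zeros $\pm y$ to reciprocal negative pairs of $t$, with $y=0$ giving $t=-1$. Your explicit Sturm induction and degree count spell out its appeal to Favard in more detail. The only slip is cosmetic: $m-n-\theta\le -1-\theta$ holds with equality at $m=n-1$, not strictly, but this still gives $\eta q^m>1$.
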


\begin{proof}
Fix $n$ and set $\eta=q^{-n-\theta}$.  The continuous $q$-ultraspherical
recurrence is
\begin{equation}\label{eq:RogersRecurrence}
 2x(1-\eta q^m)C_m(x;\eta\mid q)
 =(1-q^{m+1})C_{m+1}(x;\eta\mid q)
 +(1-\eta^2q^{m-1})C_{m-1}(x;\eta\mid q),
\end{equation}
with $C_{-1}=0$ and $C_0=1$.  Define
\[
 E_m(y):=i^m C_m(iy;\eta\mid q).
\]
Then~\eqref{eq:RogersRecurrence} becomes
\begin{equation}\label{eq:finiteFavard}
 E_{m+1}(y)=a_m yE_m(y)-b_mE_{m-1}(y),
\end{equation}
where
\[
 a_m=\frac{2(\eta q^m-1)}{1-q^{m+1}},
 \qquad
 b_m=\frac{\eta^2q^{m-1}-1}{1-q^{m+1}}.
\]
For $0\le m\le n-1$ one has $a_m>0$, because
\[
 \eta q^m\ge \eta q^{n-1}=q^{-\theta-1}>1,
\]
and for $1\le m\le n-1$ one also has $b_m>0$.  After dividing $E_m$ by
its positive leading coefficient,~\eqref{eq:finiteFavard} is a finite Favard
recurrence
\[
 R_{m+1}(y)=yR_m(y)-\lambda_mR_{m-1}(y),
 \qquad \lambda_m>0.
\]
Hence $E_n$ has $n$ simple real zeros, and therefore the zeros of
$C_n(x;\eta\mid q)$ are simple and purely imaginary.

Write the positive zeros of $E_n$ as $y_{n,1},\ldots,y_{n,\lfloor n/2\rfloor}$;
by parity the remaining zeros are their negatives, together with $0$ when
$n$ is odd.  By~\eqref{eq:RogersRow}, a zero $x=iy$ corresponds to
\[
 \frac{t^{1/2}+t^{-1/2}}{2}=iy.
\]
The two associated values of $t$ are
\begin{equation}\label{eq:tzeros}
 -\bigl(\sqrt{1+y^2}+y\bigr)^2,
 \qquad
 -\bigl(\sqrt{1+y^2}-y\bigr)^2,
\end{equation}
which are negative and reciprocal.  When $y=0$ this gives $t=-1$.
Thus all zeros of $\widehat P_n^{(\theta)}$ are simple and negative.
Finally,~\eqref{eq:Phat} differs from $P_n^{(\theta)}$ only by a positive
rescaling of the variable.
\end{proof}

\begin{corollary}[Rowwise P\'olya frequency]\label{cor:rowPF}
For $0<q<1$ and $\theta>-1$, each finite row
\[
 \bigl(K_{n,0}^{(\theta)}(q),\ldots,K_{n,n}^{(\theta)}(q)\bigr)
\]
is a PF$_\infty$ sequence.
\end{corollary}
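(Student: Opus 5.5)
The plan is to combine Theorem~\ref{thm:rowRR} with the Aissen--Schoenberg--Whitney--Edrei theorem for finite sequences. That theorem says a finite sequence of nonnegative reals $(a_0,\ldots,a_n)$ is PF$_\infty$ exactly when its generating polynomial $\sum_k a_kt^k$ has only real nonpositive zeros. Here the generating polynomial of the $n$th row is $P_n^{(\theta)}(t;q)$ itself, so the work reduces to checking positivity of the entries and recalling the easy direction of the theorem.

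First I check nonnegativity, in fact strict positivity, of every $K_{n,k}^{(\theta)}(q)$ for $0<q<1$ and $\theta>-1$. In \eqref{eq:qkernel-intro}, $q^{k(k-1)}>0$ and $\qbinom nk>0$. Each factor $(q^{\theta+1};q)_m=\prod_{j<m}(1-q^{\theta+1+j})$ is positive because $q^{\theta+1+j}<1$ whenever $\theta+1>0$. Hence the row consists of positive numbers, and the leading coefficient is $K_{n,n}^{(\theta)}(q)=q^{n(n-1)}>0$.

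Second, by Theorem~\ref{thm:rowRR}, for $n\ge1$ the polynomial factors as
\[
 P_n^{(\theta)}(t;q)=q^{n(n-1)}\prod_{j=1}^n(t+\rho_j),\qquad \rho_j>0;
\]
for $n=0$ the row is $(1)$, which is trivially PF$_\infty$. Each linear factor $t+\rho_j$ has coefficient sequence $(\rho_j,1,0,0,\ldots)$. Its Toeplitz matrix is bidiagonal with nonnegative entries, so it is totally nonnegative. The Toeplitz matrix of the row is the product of these matrices and a positive scalar, because the Toeplitz matrix of a product of polynomials is the product of their Toeplitz matrices. Cauchy--Binet then shows it is totally nonnegative, which is the PF$_\infty$ property for the row (extended by zeros).

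I expect no real obstacle: the substantive input, that all zeros are real and negative, is already Theorem~\ref{thm:rowRR}. The only care needed is to use the easy direction of Aissen--Schoenberg--Whitney, which the bidiagonal factorization proves directly, and to confirm positivity of the entries on the whole range $\theta>-1$ rather than only $\theta>0$, where the Rogers measure is positive.
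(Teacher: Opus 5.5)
Your proposal is correct and is essentially the paper's argument: positive coefficients together with the simple negative zeros from Theorem~\ref{thm:rowRR}, fed into the finite Aissen--Schoenberg--Whitney characterization. The only difference is that you verify the positivity of $K_{n,k}^{(\theta)}(q)$ on all of $\theta>-1$ explicitly and prove the needed easy direction yourself, via the bidiagonal Toeplitz factorization and Cauchy--Binet, rather than citing Karlin.
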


\begin{proof}
The coefficients are positive and Theorem~\ref{thm:rowRR} gives only real
negative zeros, so the finite Aissen--Schoenberg--Whitney characterization applies; see, for example, Karlin~\cite{Karlin}.
\end{proof}

The reciprocal pairing in Theorem~\ref{thm:rowRR} gives more.  Since
\eqref{eq:palindromic} holds, there are unique coefficients
$\gamma_{n,j}^{(\theta)}(q)$ such that
\begin{equation}\label{eq:qgammaexpansion}
 \widehat P_n^{(\theta)}(t;q)
 =\sum_{j=0}^{\lfloor n/2\rfloor}
 \gamma_{n,j}^{(\theta)}(q)t^j(1+t)^{n-2j}.
\end{equation}

\begin{corollary}[Real-rooted $\gamma$-positivity]\label{cor:qgamma}
For $0<q<1$ and $\theta>-1$,
\[
 \gamma_{n,j}^{(\theta)}(q)\ge0
 \qquad (0\le j\le\lfloor n/2\rfloor).
\]
More precisely, the $\gamma$-polynomial
\[
 \Gamma_n^{(\theta)}(u;q)
 :=\sum_{j=0}^{\lfloor n/2\rfloor}\gamma_{n,j}^{(\theta)}(q)u^j
\]
has only simple negative zeros.
\end{corollary}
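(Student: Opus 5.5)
The plan is to convert the zero information of Theorem~\ref{thm:rowRR} into a factorization of $\widehat P_n^{(\theta)}$ over reciprocal pairs of zeros. By that theorem, the zeros of $\widehat P_n^{(\theta)}(t;q)$ are simple and negative. They come in reciprocal pairs $\{-\rho_i,-\rho_i^{-1}\}$ with $\rho_i>0$ and $\rho_i\ne1$, for $1\le i\le\lfloor n/2\rfloor$. When $n$ is odd there is one more zero at $-1$. Simplicity ensures $\rho_i\neq1$, so each pair consists of two distinct points.

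The leading coefficient of $\widehat P_n^{(\theta)}$ equals its constant term by \eqref{eq:palindromic}. That constant term is $K_{n,0}^{(\theta)}(q)=1$. Hence
\[
 \widehat P_n^{(\theta)}(t;q)=(1+t)^{\epsilon}\prod_{i=1}^{\lfloor n/2\rfloor}(t+\rho_i)(t+\rho_i^{-1}),\qquad \epsilon=n-2\lfloor n/2\rfloor .
\]
For each pair, write
\[
 (t+\rho)(t+\rho^{-1})=t^2+(\rho+\rho^{-1})t+1=(1+t)^2+c\,t,\qquad c=\rho+\rho^{-1}-2>0.
\]
The strict inequality holds because $\rho\ne1$. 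Every factor is therefore a positive combination of $t^0(1+t)^2$ and $t^1(1+t)^0$ in the $\gamma$-basis.

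The basis $t^j(1+t)^{m-2j}$ is multiplicative: the product of a degree-$m$ and a degree-$m'$ basis element is $t^{j+j'}(1+t)^{m+m'-2(j+j')}$. Substituting $u=t/(1+t)^2$, the product formula becomes
\[
 \widehat P_n^{(\theta)}(t;q)=(1+t)^n\prod_{i}(1+c_iu).
\]
Comparing with \eqref{eq:qgammaexpansion}, and using uniqueness of the $\gamma$-expansion, gives
\[
 \Gamma_n^{(\theta)}(u;q)=\prod_{i=1}^{\lfloor n/2\rfloor}(1+c_iu).
\]
The coefficients of this product are elementary symmetric functions of positive numbers, so all $\gamma_{n,j}^{(\theta)}(q)$ are nonnegative, indeed positive. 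The zeros of $\Gamma_n^{(\theta)}$ are $-1/c_i$, all negative.

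The only delicate step is simplicity of these zeros, which requires the $c_i$ to be distinct. The map $\rho\mapsto\rho+\rho^{-1}$ identifies $\rho$ with $\rho^{-1}$ and is otherwise injective on each side of $\rho=1$. So if two pairs gave the same $c_i$, the corresponding zeros of $\widehat P_n^{(\theta)}$ would coincide, contradicting the simplicity in Theorem~\ref{thm:rowRR}.

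Equivalently, through \eqref{eq:tzeros} we have $c=4y^2$, where $y$ ranges over the distinct positive zeros $y_{n,i}$ of $E_n$. This gives distinctness directly.
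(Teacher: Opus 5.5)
Your proposal is correct and follows essentially the paper's argument. Like the paper, you factor the monic palindromic $\widehat P_n^{(\theta)}$ over its reciprocal zero pairs, write each quadratic factor as $(1+t)^2+4y_{n,j}^2t$, and read off $\Gamma_n^{(\theta)}(u;q)=\prod_j(1+4y_{n,j}^2u)$. Your explicit check that the $c_i$ are distinct, which is what makes the zeros of $\Gamma_n^{(\theta)}$ simple, spells out a step the paper leaves implicit in the distinctness of the positive zeros $y_{n,j}$ of $E_n$.
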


\begin{proof}
Using the notation in the proof of Theorem~\ref{thm:rowRR}, monicity and
constant term $1$ give
\begin{equation}\label{eq:gammafactorization}
 \widehat P_n^{(\theta)}(t;q)
 =(1+t)^{\varepsilon_n}
 \prod_{j=1}^{\lfloor n/2\rfloor}
 \left((1+t)^2+4y_{n,j}^2t\right),
\end{equation}
where $\varepsilon_n=0$ for even $n$ and $1$ for odd $n$.  Indeed, the two
roots in~\eqref{eq:tzeros} have product $1$, and their sum is
$-2-4y_{n,j}^2$.  Hence
\[
 \Gamma_n^{(\theta)}(u;q)
 =\prod_{j=1}^{\lfloor n/2\rfloor}(1+4y_{n,j}^2u),
\]
which proves both assertions.
\end{proof}

The classical limit has particularly simple $\gamma$-coefficients.

\begin{corollary}[Classical $\gamma$-coefficients]\label{cor:classicalgamma}
For $\theta>-1$,
\begin{equation}\label{eq:classicalgamma}
 P_n^{(\theta)}(t)
 =\sum_{j=0}^{\lfloor n/2\rfloor}
 \frac{n!}{j!(n-2j)!(\theta+1)_j}
 t^j(1+t)^{n-2j}.
\end{equation}
In particular,
\[
 \gamma_{n,j}^{(\theta)}
 =\frac{n!}{j!(n-2j)!(\theta+1)_j}.
\]
At $\theta=0$ and $\theta=1$ this gives respectively
\[
 \gamma_{n,j}^{(0)}=\frac{n!}{j!^2(n-2j)!},
 \qquad
 \gamma_{n,j}^{(1)}=\frac{n!}{j!(j+1)!(n-2j)!}.
\]
\end{corollary}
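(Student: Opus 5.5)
We want to prove Corollary~\ref{cor:classicalgamma}, the identity
\[
P_n^{(\theta)}(t)=\sum_{j}\frac{n!}{j!(n-2j)!(\theta+1)_j}\,t^j(1+t)^{n-2j}.
\]
For $\theta>0$ this follows from the classical ultraspherical moment interpretation, i.e.\ the $q\to1^-$ limit of Proposition~\ref{ar:prop:gamma}. Since $P_n^{(\theta)}(t)=\lim_{q\to1}A_n(t;q^\theta,q)$, formula~\eqref{ar:eq:gamma} gives
\[
P_n^{(\theta)}(t)=\sum_k\binom n{2k}4^k\mu_{2k}\,t^k(1+t)^{n-2k},
\]
where $\mu_{2k}$ are the even moments of the normalized Gegenbauer weight $(1-x^2)^{\theta-1/2}$.

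Those moments are
\[
\mu_{2k}=\frac{(1/2)_k}{(\theta+1)_k}.
\]
Since $(2k)!=4^kk!\,(1/2)_k$, we get
\[
\binom n{2k}4^k\mu_{2k}=\frac{n!}{k!(n-2k)!(\theta+1)_k},
\]
which is the claimed coefficient. To avoid justifying the limits, the paper's own data suffice. The Dyck-path description after~\eqref{ar:eq:even-Rogers-moments} has $r_n\to n(n+2\theta-1)/((n+\theta-1)(n+\theta))$ as $q\to1$, so the moments come from the classical Gegenbauer recurrence. Alternatively, one can take $q\to1$ directly in~\eqref{ar:eq:even-Rogers-moments}.

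A cleaner self-contained route, which I would actually write, is a direct coefficient check valid for all $\theta>-1$. Expanding the right side, the coefficient of $t^k$ is
\[
\sum_j\frac{n!}{j!(n-2j)!(\theta+1)_j}\binom{n-2j}{k-j}.
\]
It must equal
\[
\binom nk\frac{(\theta+1)_n}{(\theta+1)_k(\theta+1)_{n-k}}.
\]
After dividing by $\binom nk$, the sum becomes
\[
\sum_j\frac{k!(n-k)!}{j!(k-j)!(n-k-j)!(\theta+1)_j}
=\sum_j\frac{(-k)_j(-(n-k))_j}{j!\,(\theta+1)_j},
\]
which is ${}_2F_1(-k,-(n-k);\theta+1;1)$. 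By Chu--Vandermonde this equals
\[
\frac{(\theta+1+n-k)_k}{(\theta+1)_k}=\frac{(\theta+1)_n}{(\theta+1)_{n-k}(\theta+1)_k},
\]
which is exactly the ratio in~\eqref{eq:classical-kernel}. Both sides are rational in $\theta$ and regular for $\theta>-1$, so the identity holds on the whole range. The specializations $\theta=0,1$ are immediate from $(1)_j=j!$ and $(2)_j=(j+1)!$.

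The only delicate point is the sign bookkeeping in rewriting $1/(k-j)!$ and $1/(n-k-j)!$ as Pochhammer symbols. Each conversion contributes a factor $(-1)^j$, so the two factors cancel. This also confirms that the coefficients are positive for $\theta>-1$, consistent with Corollary~\ref{cor:qgamma} in the limit.
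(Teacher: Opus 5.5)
Your proof is correct, and the argument you say you would actually write takes a genuinely different route from the paper's. Your first sketch is essentially the paper's own proof. The paper takes the classical ultraspherical moment representation $P_n^{(\theta)}(t)=\Lfun_\theta[(1+t+2\sqrt t\,x)^n]$ as given, expands binomially, drops the odd moments, and inserts $\Lfun_\theta(x^{2j})=(1/2)_j/(\theta+1)_j$ together with $4^j(1/2)_j=(2j)!/j!$. Your detour through $\lim_{q\to1}A_n(t;q^\theta,q)$ and the limits of $r_n$ is not needed for that, and it restricts you to $\theta>0$.

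Your Chu--Vandermonde argument instead checks the identity coefficient by coefficient from the explicit kernel. The reduction to ${}_2F_1(-k,-(n-k);\theta+1;1)$, including the cancelling signs, is right.

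The two routes buy different things.
\begin{itemize}
\item The paper's route gives structure: the $\gamma$-coefficients appear as $\binom n{2j}4^j$ times even moments. This parallels Proposition~\ref{ar:prop:gamma} and the shifted-product expansion \eqref{eq:general-qgamma}, whose $q\uparrow1$ limit the paper notes also recovers \eqref{eq:classicalgamma} for $\theta\ge0$.
\item Your route gives self-containment and full range. It uses only the explicit kernel \eqref{eq:classical-kernel}, not the moment interpretation imported from the companion paper. Chu--Vandermonde holds verbatim whenever $(\theta+1)_k\neq0$, so every $\theta>-1$ is covered at once.
\end{itemize}
That range includes $-1<\theta\le-1/2$. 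There $(1-x^2)^{\theta-1/2}$ is not integrable, so the paper's functional can only be understood algebraically through its moments, a point the paper leaves implicit. Your closing continuation argument in $\theta$ is therefore harmless but unnecessary.

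In either route, the ``in particular'' clause relies on uniqueness of the expansion in the basis $t^j(1+t)^{n-2j}$. This holds because that basis is triangular, with lowest-order term $t^j$.
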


\begin{proof}
Let $\Lfun_\theta$ be the normalized ultraspherical functional underlying the
classical interpolation.  Its even moments are
\[
 \Lfun_\theta(x^{2j})=\frac{(1/2)_j}{(\theta+1)_j},
 \qquad \Lfun_\theta(x^{2j+1})=0.
\]
Therefore
\begin{align*}
 P_n^{(\theta)}(t)
 &=\Lfun_\theta\!\left[(1+t+2\sqrt t\,x)^n\right]\\
 &=\sum_{j=0}^{\lfloor n/2\rfloor}
 \binom{n}{2j}4^j\Lfun_\theta(x^{2j})
 t^j(1+t)^{n-2j},
\end{align*}
and~\eqref{eq:classicalgamma} follows from
$4^j(1/2)_j=(2j)!/j!$.
\end{proof}

\begin{remark}
For $\theta\ge0$, ordinary $\gamma$-positivity also follows directly
from the Rogers moment representation by pairing factors indexed by
$r$ and $n-1-r$ after the rescaling~\eqref{eq:Phat}.
The real-rootedness argument proves it throughout $\theta>-1$.
The next subsection gives a distinct expansion in shifted products,
with coefficients that are polynomials with nonnegative coefficients
in $q$ at the two Narayana endpoints.
\end{remark}

\subsection{Closed Rogers coefficients and the endpoint \texorpdfstring{$q$-$\gamma$}{q-gamma} expansions}
\label{subsec:DKW}

The ordinary $\gamma$ expansion in \eqref{eq:qgammaexpansion} should
be distinguished from a $q$-$\gamma$ expansion in shifted products.
The latter was developed by Dilks and, independently, by
Krattenthaler and Wachs; see \cite[Chapter~4]{DilksThesis}.
For our normalization the relevant basis is
\[
 t^j(-tq^{2j};q^2)_{n-2j}
 =t^j\prod_{r=j}^{n-j-1}(1+tq^{2r}).
\]
We give a moment proof of the known type~$A$ and type~$B$ expansions.
The proof supplies positive integral coefficients throughout
the continuous range $\theta\ge0$, and an Askey--Wilson integral
with base $q^2$ evaluates these coefficients explicitly.

We first record the polynomial form of the Ismail--Stanton
$q$-Taylor expansion at zero \cite{IsmailStantonTaylor,IsmailStantonApplications}.
For $x=(z+z^{-1})/2$, define
\begin{equation}\label{eq:rho-basis}
 \rho_0(x)=1,\qquad
 \rho_m(x)=(1+z^2)z^{-m}(-q^{2-m}z^2;q^2)_{m-1}
 \quad(m\ge1).
\end{equation}
These expressions are invariant under $z\mapsto z^{-1}$ and hence
are polynomials in $x$. They have degree $m$, parity $m$, and
$\rho_m(0)=0$ for $m\ge1$.

\begin{lemma}[Finite $q$-Taylor identity]\label{lem:finite-qtaylor}
For $n\ge0$ and arbitrary $A$,
\begin{equation}\label{eq:finite-qtaylor}
 (Az,A/z;q)_n
 =\sum_{k=0}^n(-A)^kq^{\binom k2}\qbinom nk
 (-A^2q^k;q^2)_{n-k}\rho_k(x).
\end{equation}
\end{lemma}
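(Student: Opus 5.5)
The plan is to prove \eqref{eq:finite-qtaylor} as a polynomial identity in $A$, for fixed $z$ with $x=(z+z^{-1})/2$. Both sides have degree $2n$ in $A$. On the left the leading coefficient is $q^{n(n-1)}$. On the right the degree in $A$ of the $k$th term is $k+2(n-k)=2n-k$, so only $k=0$ reaches degree $2n$, where $\rho_0=1$ and $(-A^2;q^2)_n$ has leading coefficient $q^{n(n-1)}$. The two leading coefficients therefore agree.

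The left side has the $2n$ zeros $A=q^{-j}z^{\pm1}$, $0\le j\le n-1$, which are distinct for generic $z$. It thus suffices to show that the right side vanishes at these points. By the symmetry $z\mapsto z^{-1}$, which fixes every $\rho_k$, it is enough to treat $A=q^{-j}z$. This reduces the lemma to the finite identity
\begin{equation*}
 \sum_{k=0}^n(-q^{-j}z)^kq^{\binom k2}\qbinom nk
 (-q^{k-2j}z^2;q^2)_{n-k}\rho_k(x)=0,\qquad 0\le j\le n-1,
\end{equation*}
which is a Laurent-polynomial identity in $z$.

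I expect this vanishing step to be the main obstacle, and I would first try to avoid it by induction on $n$. The natural recursion is
\[
 (Az,A/z;q)_{n+1}=(1-Az)(1-A/z)\,(Aqz,Aq/z;q)_n .
\]
Applying the inductive hypothesis with $A\mapsto Aq$ expresses the right side through $(1-2Ax+A^2)\rho_k(x)$. Two explicit identities for the basis \eqref{eq:rho-basis} are then needed. The first is a three-term rule for $x\rho_k$, obtained from $z^{-1}\rho_k$ and $z\rho_k$ by comparing the factor lists: $\rho_k$ carries the factors $1+q^{2-k+2i}z^2$ for $0\le i\le k-2$, together with $1+z^2$. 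The second is the effect of rescaling $(-A^2q^{k+2};q^2)_{n-k}$ back to $(-A^2q^k;q^2)_{n-k}$. I would then collect the coefficient of $\rho_k$ and close the induction with the $q$-Pascal rule $\qbinom{n+1}k=\qbinom nk+q^{n+1-k}\qbinom n{k-1}$ and its companion.

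Should the multiplication rule for $x\rho_k$ fail to be three-term, which is plausible because the factor exponents of $\rho_k$ and $\rho_{k+1}$ interleave, I would switch to the Askey--Wilson divided-difference operator $\mathcal D_q$. This is the route underlying Ismail--Stanton \cite{IsmailStantonTaylor,IsmailStantonApplications}. The idea is to show that $\mathcal D_q$, taken with base $q^2$ or with half-shifts as appropriate, lowers $\rho_k$ to a scalar multiple of $\rho_{k-1}$, while $\rho_k$ vanishes at the evaluation point for $k\ge1$. One also computes $\mathcal D_q^k(Az,A/z;q)_n$, which by the standard rule is again a scaled Askey--Wilson basis product. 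Evaluating at the point where all $\rho_k$ with $k\ge1$ vanish would read off the coefficient $(-A)^kq^{\binom k2}\qbinom nk(-A^2q^k;q^2)_{n-k}$ directly.

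In either route I would check the cases $n=1,2$ by hand to fix the normalizations. For $n=1$ the identity reads $1-2Ax+A^2=(1+A^2)-A\rho_1$, which holds because $\rho_1=z+z^{-1}=2x$.
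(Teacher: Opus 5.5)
Of your three lines of attack, only the last one closes, and it is exactly the paper's proof.

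\textbf{The zero-counting reduction.} It is valid: both sides have degree $2n$ in $A$, the leading coefficients match, and there are $2n$ distinct roots for generic $z$. But it only moves the whole content of the lemma into the vanishing identity at $A=q^{-j}z$, which you leave unproved.

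\textbf{The induction.} It fails at its key step, because multiplication by $x$ is not three-term in the $\rho$-basis. Put $u=2x$ and $\delta_e=(q^{e/2}-q^{-e/2})^2$. The definition \eqref{eq:rho-basis} gives $\rho_1=u$, $\rho_3=u(u^2+\delta_1)$, $\rho_4=u^2(u^2+\delta_2)$ and $\rho_5=u(u^2+\delta_1)(u^2+\delta_3)$. Hence
\[
 u\rho_4=\rho_5+(\delta_2-\delta_1-\delta_3)\rho_3+\delta_1(\delta_1-\delta_2)\rho_1,
 \qquad
 \delta_1(\delta_1-\delta_2)=-\frac{(1-q)^4(1+q+q^2)}{q^3}\neq0 .
\]
So the coefficient of $\rho_k$ in $(1-2Ax+A^2)(Aqz,Aq/z;q)_n$ picks up contributions from $\rho_{k-1}$ and from all of $\rho_{k+1},\rho_{k+3},\dots$. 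The recursion you describe, closed by a single $q$-Pascal step, does not go through.

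\textbf{The fallback.} Here you must commit to the standard half-step operator $\mathcal D_q$, with shifts $z\mapsto q^{\pm1/2}z$. The base-$q^2$ version $\mathcal D_{q^2}$, with shifts $z\mapsto q^{\pm1}z$, already fails on $\rho_3$: it gives $(\mathcal D_{q^2}\rho_3)(0)=2(q+q^{-1}-q^2-q^{-2})\neq0$, so $\mathcal D_{q^2}\rho_3$ is not a multiple of $\rho_2$.

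With the half-step operator, the ingredients are exactly the paper's:
\begin{enumerate}
\item the lowering rule $\mathcal D_q\rho_m=2q^{(1-m)/2}\frac{1-q^m}{1-q}\rho_{m-1}$;
\item the Taylor formula $f=\sum_k\frac{(1-q)^kq^{k(k-1)/4}}{2^k(q;q)_k}(\mathcal D_q^kf)(0)\rho_k$, which holds because $\deg\rho_m=m$ and $\rho_{m-k}(0)=0$ for $m>k$;
\item the rule $\mathcal D_q\Phi_n(x;A\mid q)=-2A\frac{1-q^n}{1-q}\Phi_{n-1}(x;Aq^{1/2}\mid q)$;
\item the evaluation $\Phi_m(0;B\mid q)=(-B^2;q^2)_m$.
\end{enumerate}
The two factors $q^{k(k-1)/4}$ combine to $q^{\binom k2}$, the powers of $2$ and of $1-q$ cancel, and the $q$-factorials give $\qbinom nk$.

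Drop the first two routes and fix these constants, and your argument coincides with the paper's.
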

\begin{proof}
Write $\breve f(z)=f((z+z^{-1})/2)$ and use the Askey--Wilson
divided-difference operator
\[
 (\mathcal D_q f)(x)=
 \frac{\breve f(q^{1/2}z)-\breve f(q^{-1/2}z)}
 {(q^{1/2}-q^{-1/2})(z-z^{-1})/2}.
\]
Apparent singularities are removed polynomially. Direct substitution
in the finite products gives the two lowering identities
\begin{align}
 \mathcal D_q\rho_m(x)
 &=2q^{(1-m)/2}\frac{1-q^m}{1-q}\rho_{m-1}(x),
 \label{eq:rho-lowering}\\
 \mathcal D_q\Phi_n(x;A\mid q)
 &=-2A\frac{1-q^n}{1-q}\Phi_{n-1}(x;Aq^{1/2}\mid q).
 \label{eq:Phi-lowering}
\end{align}
Since the $\rho_k$ form a degree-graded basis and vanish at zero
for $k>0$, iteration of \eqref{eq:rho-lowering} shows that every
polynomial $f$ has the expansion
\begin{equation}\label{eq:rho-Taylor}
 f(x)=\sum_{k\ge0}
 \frac{(1-q)^kq^{k(k-1)/4}}{2^k(q;q)_k}
 (\mathcal D_q^k f)(0)\rho_k(x).
\end{equation}
On the other hand, \eqref{eq:Phi-lowering} gives
\[
 (\mathcal D_q^k\Phi_n)(0)
 =\frac{(-2A)^kq^{k(k-1)/4}(q;q)_n}
 {(1-q)^k(q;q)_{n-k}}
 (-A^2q^k;q^2)_{n-k},
\]
because $\Phi_m(0;B\mid q)=(-B^2;q^2)_m$.
Substitution in \eqref{eq:rho-Taylor} proves the identity.
\end{proof}

\begin{proposition}[Positive Rogers coefficients]\label{prop:Rogers-qgamma}
Let $0<q<1$ and $\theta\ge0$, with the limiting arcsine functional
at $\theta=0$. Put $d_0^{(\theta)}(q)=1$ and
\begin{align}
 d_j^{(\theta)}(q)
 &:=\Lfun_{q^\theta}\!\left[(-z^2,-z^{-2};q^2)_j\right]
 \label{eq:d-Rogers}\\
 &=\int_{-1}^1 4x^2
 \prod_{r=1}^{j-1}\left((1-q^{2r})^2+4q^{2r}x^2\right)
 \,d\nu_{q^\theta,q}(x)>0\qquad(j\ge1).
 \notag
\end{align}
Then
\begin{equation}\label{eq:general-qgamma}
 P_n^{(\theta)}(t;q)
 =\sum_{j=0}^{\lfloor n/2\rfloor}
 q^{j^2}\qbinom n{2j}d_j^{(\theta)}(q)
 t^j(-tq^{2j};q^2)_{n-2j}.
\end{equation}
The coefficients in this shifted-product basis are positive for
each fixed $q$; polynomial dependence on $q$ is not asserted for
general $\theta$.
\end{proposition}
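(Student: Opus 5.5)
The plan is to apply the functional $\Lfun_{q^\theta}$ to the finite $q$-Taylor identity of Lemma~\ref{lem:finite-qtaylor} with $A=-\sqrt t$, and to use the moment formula of Theorem~\ref{thm:moment} on the left-hand side. This gives
\[
 P_n^{(\theta)}(t;q)=\sum_{k=0}^n t^{k/2}q^{\binom k2}\qbinom nk(-tq^k;q^2)_{n-k}\,\Lfun_{q^\theta}[\rho_k(x)].
\]
The functional is symmetric and $\rho_k$ has parity $k$, so the odd $k$ terms vanish. For $k=2j$ we have $q^{\binom{2j}2}=q^{j(2j-1)}$ and $(-tq^{2j};q^2)_{n-2j}$, which is already the shifted-product basis.

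It then remains to identify $\Lfun[\rho_{2j}]$ with $d_j^{(\theta)}$ up to the factor $q^{j^2-j(2j-1)}=q^{j-j^2}$. Take $m=2j$, so that
\[
 \rho_{2j}=(1+z^2)z^{-2j}(-q^{2-2j}z^2;q^2)_{2j-1}.
\]
The factors of $(-q^{2-2j}z^2;q^2)_{2j-1}$ are $1+q^{2r}z^2$ for $r=1-j,\dots,j-1$. For each negative $r=-s$, rewrite $1+q^{-2s}z^2=q^{-2s}z^2(1+q^{2s}z^{-2})$. Collecting the powers of $z$ gives $z^{2(j-1)}$, and together with the $z^{-2j}$ and $(1+z^2)$ this yields
\[
 \rho_{2j}=q^{-j(j-1)}(1+z^2)z^{-2}\cdot(1+1)^{0}\cdots=q^{-j(j-1)}(z+z^{-1})\,z^{-1}\cdots .
\]
More cleanly, $(1+z^2)z^{-2}\cdot z^{0}$ combines with the $r=0$ factor $(1+z^2)$ to give $(1+z^2)(1+z^{-2})=(-z^2,-z^{-2};q^2)_1$. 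The factors with $r=\pm s$, $s\ge1$, pair to $(1+q^{2s}z^2)(1+q^{2s}z^{-2})$. Hence
\[
 \rho_{2j}=q^{-j(j-1)}(-z^2,-z^{-2};q^2)_j.
\]
I will double-check the bookkeeping of the powers of $z$ here, since the $r=0$ factor and the prefactor $(1+z^2)z^{-2j}$ must combine exactly. The resulting power of $q$ is $q^{j(2j-1)-j(j-1)}=q^{j^2}$, matching \eqref{eq:general-qgamma}.

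For the integral form and positivity, write $z=e^{i\phi}$. Then $(1+z^2)(1+z^{-2})=|1+z^2|^2=4\cos^2\phi=4x^2$, and
\[
 (1+q^{2r}z^2)(1+q^{2r}z^{-2})=1+2q^{2r}\cos2\phi+q^{4r}=(1-q^{2r})^2+4q^{2r}x^2,
\]
using $\cos 2\phi=2x^2-1$. This is a polynomial in $x$, so the symmetrized functional agrees with the actual integral against $\nu_{q^\theta,q}$. At $\theta=0$ the same holds with the arcsine measure. The integrand is nonnegative and vanishes only at $x=0$, and the measure has positive density on $(-1,1)$, so $d_j>0$.

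The main obstacle is justifying that the moment formula applies at $\theta\ge0$, including the limiting case $\theta=0$. Theorem~\ref{thm:moment} covers this. Everything else is finite algebra; the only delicate point is the exponent bookkeeping when pairing the factors of $\rho_{2j}$.
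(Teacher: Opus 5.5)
Your proposal is correct and is essentially the paper's own proof. You apply the symmetric functional $\Lfun_{q^\theta}$ to the finite $q$-Taylor identity with $A=-\sqrt t$, discard odd $k$ by parity, use $\rho_{2j}=q^{-j(j-1)}(-z^2,-z^{-2};q^2)_j$ together with $\binom{2j}{2}-j(j-1)=j^2$, and obtain the integral form and positivity by pairing the factors with indices $\pm r$.

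Your bookkeeping for $\rho_{2j}$ is right: the prefactor $(1+z^2)z^{-2j}$ times $z^{2(j-1)}$ gives $1+z^{-2}$, which pairs with the $r=0$ factor $1+z^2$. Just delete the garbled intermediate display containing $(1+1)^{0}$ that precedes your ``more cleanly'' sentence.
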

\begin{proof}
For $j\ge0$, the even basis polynomials satisfy
\[
 \rho_{2j}(x)=q^{-j(j-1)}(-z^2,-z^{-2};q^2)_j.
\]
Apply the symmetric Rogers functional to
\eqref{eq:finite-qtaylor} with $A=-\sqrt t$.
Odd $k$ vanish by parity, and
$\binom{2j}{2}-j(j-1)=j^2$, proving
\eqref{eq:general-qgamma}. The product in \eqref{eq:d-Rogers}
follows by pairing the two factors with index $r$:
\[
 (1+q^{2r}z^2)(1+q^{2r}z^{-2})
 =(1-q^{2r})^2+4q^{2r}x^2.
\]
For $j\ge1$ its integral is strictly positive, since the measure
has positive mass away from $x=0$.
\end{proof}

\begin{theorem}[Closed Rogers coefficients]\label{thm:closed-Rogers-qgamma}
For $0<q<1$, $\theta\ge0$, and $j\ge0$, the coefficients in
\eqref{eq:d-Rogers} have the product evaluation
\begin{equation}\label{eq:d-Rogers-closed}
 d_j^{(\theta)}(q)
 =\frac{(q;q^2)_j(-q^\theta;q)_{2j}}
 {(q^{2\theta+2};q^2)_j}.
\end{equation}
Consequently, for every $m\ge0$,
\begin{equation}\label{eq:continuous-KW}
 P_m^{(\theta)}(q^2t;q)
 =\sum_{j=0}^{\lfloor m/2\rfloor}
 q^{j(j+2)}\qbinom m{2j}
 \frac{(q;q^2)_j(-q^\theta;q)_{2j}}
 {(q^{2\theta+2};q^2)_j}
 t^j(-tq^{2j+2};q^2)_{m-2j}.
\end{equation}
\end{theorem}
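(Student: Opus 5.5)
The plan is to compute $d_j^{(\theta)}(q)$ as a ratio of two Askey--Wilson integrals in base $q^2$, after rewriting the Rogers weight in the variable $u=z^2$. The substitution $t\mapsto q^2t$ in Proposition~\ref{prop:Rogers-qgamma} then gives \eqref{eq:continuous-KW}.

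\textbf{Step 1: rewrite the weight.} Put $\beta=q^\theta$ and $p=q^2$. Using $(z^2;q)_\infty(-z^2;q)_\infty=(z^4;q^2)_\infty$, we get
\[
 (z^2;q)_\infty=\frac{(u^2;p)_\infty}{(-u,-qu;p)_\infty},\qquad
 (\beta z^2;q)_\infty=(\beta u,\beta q u;p)_\infty .
\]
Hence the Rogers density in \eqref{ar:eq:measure} becomes
\[
 \left|\frac{(u^2;p)_\infty}{(-u,-qu,\beta u,\beta qu;p)_\infty}\right|^2 .
\]
This is the Askey--Wilson weight in base $p$ with parameters
\[
 (a,b,c,d)=(-1,-q,\beta,\beta q),
\]
written in the variable $u=e^{2i\phi}$. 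As $\phi$ runs over $[0,\pi]$, $u$ traverses the unit circle once. Because the integrand is a function of $(u+u^{-1})/2$, the normalized functional $\Lfun_\beta$ applied to functions of $u+u^{-1}$ equals the normalized Askey--Wilson functional $\Lfun_{a,b,c,d}$ in base $p$.

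\textbf{Step 2: apply the basis-shift ratio.} The integrand in \eqref{eq:d-Rogers} is $(au,a/u;p)_j$ with $a=-1$. Multiplying the weight by this factor replaces $a$ by $ap^j$. The Askey--Wilson evaluation $\prod(ab;p)_\infty^{-1}(abcd;p)_\infty$ then gives the standard ratio
\[
 \Lfun_{a,b,c,d}\big[(au,a/u;p)_j\big]=\frac{(ab,ac,ad;p)_j}{(abcd;p)_j}.
\]
This is also the case $n=j$, $A=a$ of \eqref{eq:NRbasis}, where the ${}_8W_7$ reduces to $1$. With our parameters, $ab=q$, $ac=-\beta$, $ad=-\beta q$ and $abcd=\beta^2q^2=q^{2\theta+2}$. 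Since $(-\beta,-\beta q;q^2)_j=(-\beta;q)_{2j}$, this yields \eqref{eq:d-Rogers-closed}.

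\textbf{Step 3: substitute.} Replacing $t$ by $q^2t$ in \eqref{eq:general-qgamma} changes $q^{j^2}t^j$ into $q^{j(j+2)}t^j$, and changes $(-tq^{2j};q^2)_{m-2j}$ into $(-tq^{2j+2};q^2)_{m-2j}$. This gives \eqref{eq:continuous-KW}.

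\textbf{Main obstacle.} The parameter $a=-1$ lies on the unit circle, outside the usual hypothesis $|a|<1$ for the Askey--Wilson integral. The factor $(1+u)(1+u^{-1})$ is cancelled by $(u^2;p)_\infty$, so the weight is integrable. To justify Step~2 rigorously, I would replace $a=-1$ by $a=-s$ with $0<s<1$, apply the evaluation there, and let $s\uparrow1$ by dominated convergence. Both sides of the ratio formula are continuous in $s$, and the normalizing integral stays finite and positive in the limit. The endpoint $\theta=0$, with its arcsine functional, would then follow by letting $\beta\to1$, since both sides are continuous in $\beta$.
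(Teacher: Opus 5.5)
Your proposal is correct and follows essentially the same route as the paper: you rewrite the Rogers weight in $u=z^2$ as an Askey--Wilson weight in base $q^2$ with parameters $(-1,-q,q^\theta,q^{\theta+1})$, absorb $(-u,-u^{-1};q^2)_j$ as the parameter shift $-1\mapsto-q^{2j}$, take the ratio of Askey--Wilson integrals, and then substitute $t\mapsto q^2t$ in \eqref{eq:general-qgamma}. The one addition is your $s\uparrow1$ dominated-convergence step, which works because $|1+su|^2\ge s|1+u|^2$ on the unit circle; it makes explicit what the paper only states briefly as interpreting the unit-modulus parameters by continuous limits.
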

\begin{proof}
Put $Q=q^2$, $\beta=q^\theta$, and $w=z^2$.
For a function of $w$ write
$\langle f\rangle=(2\pi)^{-1}\int_0^{2\pi}f(e^{iu})\,du$, and set
\[
 J_j=\left\langle
 \frac{(w,w^{-1};q)_\infty}
 {(\beta w,\beta/w;q)_\infty}
 (-w,-w^{-1};Q)_j\right\rangle.
\]
Changing variables from $z$ to $w$ in the normalized Rogers
integral gives $d_j^{(\theta)}=J_j/J_0$.
The identity
\[
 (w;q)_\infty(-w;Q)_\infty(-qw;Q)_\infty
 =(w^2;Q)_\infty
\]
and its counterpart with $w^{-1}$ rewrite the integrand as
\[
 \frac{(w^2,w^{-2};Q)_\infty}
 {(-qw,-q/w,-Q^jw,-Q^j/w,
 \beta w,\beta/w,\beta qw,\beta q/w;Q)_\infty}.
\]
This is the Askey--Wilson weight with base $Q$ and parameters
$(a,b,c,d)=(-q,-Q^j,\beta,\beta q)$.
The Askey--Wilson integral
\cite[Section~6.1]{GasperRahman}, or the degree-zero case of
\cite[Section~18.28(ii)]{DLMF}, gives
\begin{align*}
 \frac{J_j}{J_0}
 &=\frac{(\beta^2Q^{j+1};Q)_\infty}
 {(\beta^2Q;Q)_\infty}
 \frac{(q,-\beta,-\beta q;Q)_\infty}
 {(qQ^j,-\beta Q^j,-\beta qQ^j;Q)_\infty}\\
 &=\frac{(q;Q)_j(-\beta;Q)_j(-\beta q;Q)_j}
 {(\beta^2Q;Q)_j}
 =\frac{(q;Q)_j(-\beta;q)_{2j}}
 {(\beta^2Q;Q)_j}.
\end{align*}
The parameters of absolute value one are interpreted by their
continuous limits; the factors cancel in the displayed weights.
In particular, $\beta=1$ gives the limiting arcsine functional.
This proves \eqref{eq:d-Rogers-closed}.
Substitution of $q^2t$ for $t$ in
\eqref{eq:general-qgamma} proves \eqref{eq:continuous-KW}.
\end{proof}

\begin{theorem}[Dilks--Krattenthaler--Wachs endpoint expansions]
\label{thm:DKW-endpoints}
Formula \eqref{eq:general-qgamma} holds at the two endpoints with
\begin{equation}\label{eq:d-endpoints}
 d_j^{(1)}(q)=\operatorname{Cat}_j(q^2),\qquad
 d_j^{(0)}(q)=2\genfrac{[}{]}{0pt}{}{2j-1}{j}_{q^2}
 \quad(j\ge1),
\end{equation}
where
\[
 \operatorname{Cat}_j(Q)=\frac1{[j+1]_Q}
 \genfrac{[}{]}{0pt}{}{2j}{j}_Q,
 \qquad \operatorname{Cat}_0(Q)=1.
\]
In particular, both endpoint expansions have coefficients in
$\mathbb N[q]$.
\end{theorem}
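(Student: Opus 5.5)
The plan is to specialize the closed evaluation of Theorem~\ref{thm:closed-Rogers-qgamma} rather than recompute any integrals. By Proposition~\ref{prop:Rogers-qgamma}, formula \eqref{eq:general-qgamma} already holds for every $\theta\ge0$, with the limiting arcsine functional at $\theta=0$. So only the identification \eqref{eq:d-endpoints} is needed. We substitute $\theta=1$ and $\theta=0$ into \eqref{eq:d-Rogers-closed}, writing $Q=q^2$, and simplify the $q$-shifted factorials into Gaussian coefficients in base $Q$. The key identity used throughout is
\[
 (q;Q)_j\,(-q;q)_{2j}\cdot(-1;q)_{2j}/(-1;q)_{2j},
\]
or, more usefully,
\[
 (q;q^2)_j(-q;q)_{2j}=\frac{(q;q^2)_j(q^2;q^2)_{2j}}{(q;q)_{2j}}
 =\frac{(q^2;q^2)_{2j}}{(q^2;q^2)_j}=(Q^{j+1};Q)_j .
\]
This follows from $(q;q)_{2j}=(q;q^2)_j(q^2;q^2)_j$ and $(-q;q)_{2j}=(q^2;q^2)_{2j}/(q;q)_{2j}$.

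For $\theta=1$, the numerator of \eqref{eq:d-Rogers-closed} is $(q;q^2)_j(-q;q)_{2j}=(Q;Q)_{2j}/(Q;Q)_j$. The denominator is $(q^4;q^2)_j=(Q^2;Q)_j=(Q;Q)_{j+1}/(1-Q)$. Hence
\[
d_j^{(1)}=\frac{(1-Q)(Q;Q)_{2j}}{(Q;Q)_j(Q;Q)_{j+1}}
=\frac{1}{[j+1]_Q}\genfrac{[}{]}{0pt}{}{2j}{j}_{Q}=\operatorname{Cat}_j(Q).
\]

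For $\theta=0$, the factor $(-1;q)_{2j}$ equals $2(-q;q)_{2j-1}$ for $j\ge1$. The numerator is therefore $2(q;q^2)_j(-q;q)_{2j}/(1+q^{2j})$. The denominator is $(Q;Q)_j$. Using the same identity, this gives
\[
d_j^{(0)}=\frac{2(Q;Q)_{2j}}{(Q;Q)_j^2(1+q^{2j})}.
\]
We then need to check that this equals $2\genfrac{[}{]}{0pt}{}{2j-1}{j}_Q$. Indeed $(Q;Q)_{2j}=(Q;Q)_{2j-1}(1-Q^{2j})$, and $(1-Q^{2j})/(1+q^{2j})=1-Q^j$. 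Combining this with one factor $(Q;Q)_j=(Q;Q)_{j-1}(1-Q^j)$ gives $2(Q;Q)_{2j-1}/((Q;Q)_j(Q;Q)_{j-1})$, as required.

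These manipulations are routine. The only genuine care point is the $\theta=0$ case: the closed formula must be legitimately specialized at $\beta=1$, where the Askey--Wilson parameters $\beta,\beta q$ touch the unit circle. Theorem~\ref{thm:closed-Rogers-qgamma} already asserts validity at $\theta\ge0$ via the continuous limit. As a backup, one could check small $j$ directly from \eqref{eq:d-Rogers}. For example, $d_1^{(0)}=\int4x^2\,d\nu=2$ for the arcsine law, which matches $2\genfrac{[}{]}{0pt}{}{1}{1}_Q=2$.

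Finally, membership in $\mathbb N[q]$ follows because $q$-Catalan numbers and Gaussian coefficients in $Q=q^2$ are polynomials with nonnegative integer coefficients. Moreover, the remaining factors $q^{j^2}\qbinom n{2j}$ in \eqref{eq:general-qgamma} lie in $\mathbb N[q]$.
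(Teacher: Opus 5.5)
Your proof is correct, but it takes a different route from the paper. You obtain the endpoint values by specializing the continuous-parameter product \eqref{eq:d-Rogers-closed} of Theorem~\ref{thm:closed-Rogers-qgamma}. You then simplify with $(q;q^2)_j(-q;q)_{2j}=(Q;Q)_{2j}/(Q;Q)_j$. Both simplifications check out: $\theta=1$ gives $\operatorname{Cat}_j(Q)$, and $\theta=0$ gives $2\genfrac{[}{]}{0pt}{}{2j-1}{j}_Q$ after using $(1-Q^{2j})/(1+Q^j)=1-Q^j$. The paper proves the endpoint case without the Askey--Wilson integral:
\begin{itemize}
\item At $\theta=0$ the normalized functional is the constant term $\operatorname{CT}_z$.
\item At $\theta=1$ the Rogers weight telescopes to $|1-z^2|^2$, so $\Lfun_q[F]=\operatorname{CT}_z[(1-z^2)F]$.
\item It factors $R_j(w)=(-w,-w^{-1};Q)_j$ as in \eqref{eq:Rj-factor} and reads off $\operatorname{CT}_wR_j$ and $\operatorname{CT}_w[(1-w)R_j]$ from the finite $Q$-binomial theorem.
\end{itemize}

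Your route is shorter and shows the endpoints as specializations of the $\theta$-family. However, it inherits the base-$q^2$ Askey--Wilson evaluation and the continuity argument at $\beta=1$, where a parameter sits on the unit circle. You flag this correctly, and Theorem~\ref{thm:closed-Rogers-qgamma} does cover $\theta\ge0$, so nothing is circular. The paper's route needs no limit at $\theta=0$ and is entirely finite and elementary. It also gives an independent check of \eqref{eq:d-Rogers-closed} at two values of $\theta$. Finally, it produces integer-coefficient constant-term representations of the kind reused in the integrality proof of Theorem~\ref{thm:KW-super}.

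One cosmetic point: your first displayed ``key identity'' $(q;Q)_j(-q;q)_{2j}\cdot(-1;q)_{2j}/(-1;q)_{2j}$ is not an identity and should be deleted. The second display is the one you actually use.
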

\begin{proof}
At $\theta=0$ the measure is arcsine. At $\theta=1$ the Rogers
weight telescopes to $|1-z^2|^2=4\sin^2\phi$, so the normalized
functional is the semicircle functional. Thus, for symmetric
Laurent polynomials $F$,
\[
 \Lfun_{1}[F]=\operatorname{CT}_z F,\qquad
 \Lfun_q[F]=\operatorname{CT}_z[(1-z^2)F],
\]
where the first functional denotes the limiting value $\beta=1$.
Set $Q=q^2$, $w=z^2$ and
$R_j(w)=(-w,-w^{-1};Q)_j$. For $j\ge1$,
\begin{equation}\label{eq:Rj-factor}
 R_j(w)=Q^{\binom j2}w^{-j}(1+w)
 (-wQ^{1-j};Q)_{2j-1}.
\end{equation}
The finite $Q$-binomial theorem now gives
\begin{align}
 \operatorname{CT}_w R_j(w)
 &=2\genfrac{[}{]}{0pt}{}{2j-1}{j}_Q,
 \label{eq:CT-arcsine}\\
 \operatorname{CT}_w[(1-w)R_j(w)]
 &=\genfrac{[}{]}{0pt}{}{2j-1}{j}_Q
   -Q\genfrac{[}{]}{0pt}{}{2j-1}{j-2}_Q
 =\frac{\genfrac{[}{]}{0pt}{}{2j}{j}_Q}{[j+1]_Q}.
 \label{eq:CT-semicircle}
\end{align}
Gaussian coefficients outside their usual range are zero.
For the last equality, use
\[
 \frac{\genfrac{[}{]}{0pt}{}{2j-1}{j-2}_Q}
 {\genfrac{[}{]}{0pt}{}{2j-1}{j}_Q}
 =\frac{1-Q^{j-1}}{1-Q^{j+1}},\qquad
 \genfrac{[}{]}{0pt}{}{2j}{j}_Q
 =(1+Q^j)\genfrac{[}{]}{0pt}{}{2j-1}{j}_Q.
\]
These calculations evaluate \eqref{eq:d-Rogers} and prove
\eqref{eq:d-endpoints}. The Gaussian polynomials and the usual
Gaussian $q$-Catalan polynomials have nonnegative integer
coefficients, proving the final assertion.
\end{proof}

\begin{remark}[Normalization and scope]\label{rem:DKW-normalization}
Dilks's Catalan polynomial in \cite[Theorem~4.8]{DilksThesis} is
$H_{n+1}^{(2)}(q,t)=P_n^{(1)}(q^2t;q)$.
Substituting $q^2t$ in our formula gives his factor
$q^{j(j+2)}\qbinom n{2j}\operatorname{Cat}_j(q^2)$ and products
$\prod_{r=j+1}^{n-j}(1+tq^{2r})$.
The type~$B$ formula corresponds to
\cite[Theorems~4.26 and~4.28]{DilksThesis}, with
$\operatorname{Cyc}_n(q,t)=P_n^{(0)}(qt;q)$.
The identities are known; the proof here derives them from the
Rogers functional and the Ismail--Stanton $q$-Taylor basis, without
the terminating hypergeometric quadratic transformation used in
\cite[Lemma~4.10]{DilksThesis}.

This shifted-product positivity is different from coefficientwise
positivity of the ordinary coefficients $\gamma_{n,j}^{(\theta)}(q)$
in \eqref{eq:qgammaexpansion}. For example,
\[
 \widehat P_2^{(1)}(t;q)
 =(1+t)^2+(q^{-1}+q-1)t.
\]
The second coefficient is positive for $0<q<1$, but is not a
Laurent polynomial with nonnegative coefficients.
\end{remark}

As $q\uparrow1$ with $\beta=q^\theta$, the moment formula gives
$d_j^{(\theta)}(q)\to(2j)!/[j!(\theta+1)_j]$ for $\theta\ge0$.
Thus \eqref{eq:general-qgamma} recovers
\eqref{eq:classicalgamma}, while the endpoint evaluations recover
the classical type~$A$ and type~$B$ $\gamma$ coefficients.

\subsection{Super \texorpdfstring{$q$}{q}-Narayana polynomials and the Krattenthaler--Wachs formula}
\label{subsec:super-Narayana}

The preceding product evaluation also recovers the full super
$q$-Narayana expansion of Krattenthaler and Wachs
\cite[slides~57--59]{WachsSlides}.
For integers $n\ge s\ge0$, their polynomials are
\begin{equation}\label{eq:super-Narayana-def}
 \mathcal N_n^{(s)}(q,t)
 =\frac{\qbinom{2s}s}{\qbinom ns}
 \sum_{r=0}^{n-s}q^{r(r+1)}
 \qbinom nr\qbinom n{r+s}t^r.
\end{equation}
We also use the super $q$-Catalan numbers
\begin{equation}\label{eq:super-Catalan-def}
 \mathsf C_j^{(s)}(q)
 =\frac{(q;q)_{2s}(q;q)_{2j}}
 {(q;q)_s(q;q)_j(q;q)_{s+j}}
 =\qbinom{2s}s\qbinom{2j}j
   \qbinom{s+j}s^{-1}.
\end{equation}
The super $q$-Narayana definition and the product expansion below
are due to Krattenthaler--Wachs. Subsequent identities and positivity results
for this family are studied in \cite{RodeletTevlin}.

\begin{theorem}[Krattenthaler--Wachs formula via Rogers moments]
\label{thm:KW-super}
Let $n\ge s\ge0$ be integers and put $m=n-s$. Then
\begin{equation}\label{eq:super-P-connection}
 \mathcal N_n^{(s)}(q,t)=\qbinom{2s}s P_m^{(s)}(q^2t;q),
\end{equation}
and
\begin{equation}\label{eq:KW-super-expansion}
 \mathcal N_n^{(s)}(q,t)
 =\sum_{j=0}^{\lfloor m/2\rfloor}
 q^{j(j+1)}\gamma_{n,j}^{(s)}(q)t^j
 \prod_{i=j+1}^{m-j}(1+q^{2i}t),
\end{equation}
where
\begin{align}
 \gamma_{n,j}^{(s)}(q)
 &=q^j\qbinom m{2j}\qbinom{2s}s
 \frac{(q;q^2)_j(-q^s;q)_{2j}}
 {(q^{2s+2};q^2)_j}
 \label{eq:KW-coeff-Rogers}\\
 &=q^j\qbinom m{2j}\mathsf C_j^{(s)}(q^2)
 \prod_{i=1}^{s}\frac{1+q^i}{1+q^{s+i}}
 \prod_{i=1}^{2j}\frac{1+q^{s+i-1}}{1+q^i}.
 \label{eq:KW-coeff-slides}
\end{align}
Moreover, $\gamma_{n,j}^{(s)}(q)\in\mathbb Z[q]$ and
$\mathcal N_n^{(s)}(q,t)\in\mathbb Z[q,t]$.
\end{theorem}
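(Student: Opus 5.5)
Three things have to be shown: the identity \eqref{eq:super-P-connection}, the expansion \eqref{eq:KW-super-expansion} together with both coefficient forms, and the integrality claims.

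\emph{Step 1: the connection formula.} I would compare coefficients of $t^r$ directly. By \eqref{eq:gappedHoggatt} with $\theta=s$,
\[
[t^r]P_m^{(s)}(q^2t;q)=q^{2r}q^{r(r-1)}\qbinom mr\frac{\qbinom{m+s}r}{\qbinom{r+s}r}
=q^{r(r+1)}\qbinom mr\frac{\qbinom nr}{\qbinom{r+s}r}.
\]
It therefore suffices to check the Gaussian-coefficient identity
\[
\qbinom mr\,\qbinom ns=\qbinom n{r+s}\,\qbinom{r+s}r,
\qquad m=n-s.
\]
Both sides equal $(q;q)_n/\bigl((q;q)_r(q;q)_s(q;q)_{m-r}\bigr)$, so the identity holds, and multiplying by $\qbinom{2s}s$ gives \eqref{eq:super-P-connection}.

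\emph{Step 2: the expansion.} I would substitute \eqref{eq:continuous-KW} from Theorem~\ref{thm:closed-Rogers-qgamma} with $\theta=s$ and multiply by $\qbinom{2s}s$. Since
\[
(-tq^{2j+2};q^2)_{m-2j}=\prod_{i=j+1}^{m-j}(1+q^{2i}t)
\quad\text{and}\quad
q^{j(j+2)}=q^{j(j+1)}q^j,
\]
this yields \eqref{eq:KW-super-expansion} with coefficient \eqref{eq:KW-coeff-Rogers}. Theorem~\ref{thm:closed-Rogers-qgamma} needs only $\theta\ge0$, and integer $s\ge0$ is covered.

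\emph{Step 3: the second coefficient form.} To pass to \eqref{eq:KW-coeff-slides} I would write everything as $q$-Pochhammer symbols:
\[
(-q^s;q)_{2j}=\prod_{i=1}^{2j}(1+q^{s+i-1}),
\qquad
(q^{2s+2};q^2)_j=\frac{(q^2;q^2)_{s+j}}{(q^2;q^2)_s},
\qquad
(q;q^2)_j=\frac{(q;q)_{2j}}{(q^2;q^2)_j}.
\]
Then I would use $(q;q)_{2k}=(q;q^2)_k(q^2;q^2)_k$ and $(q^2;q^2)_k=(q;q)_k(-q;q)_k$ to convert the base-$q$ factor $\qbinom{2s}s$ and the base-$q^2$ factors into $\mathsf C^{(s)}_j(q^2)$. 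The leftover $(-q;q)$ ratios should give exactly the two displayed products of $(1+q^i)$ factors. This is routine bookkeeping of $q^2$- versus $q$-factorials.

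\emph{Step 4: integrality, the main obstacle.} This step is subtle because the products in \eqref{eq:KW-coeff-slides} are not visibly polynomial. The paper's hint is ``a finite constant-term representation.'' My plan is to use the integral form \eqref{eq:d-Rogers}: for integer $\theta=s$, the normalized Rogers weight $(w,w^{-1};q)_\infty/(q^s w,q^s/w;q)_\infty$ telescopes to the finite Laurent polynomial $(w,w^{-1};q)_s$. Hence
\[
d_j^{(s)}(q)=\frac{\operatorname{CT}_w\!\left[(w,w^{-1};q)_s\,(-w,-w^{-1};q^2)_j\right]}{\operatorname{CT}_w\!\left[(w,w^{-1};q)_s\right]}.
\]
The denominator, by the $q$-Dyson/Zeilberger--Bressoud case of two variables, equals $(q;q)_{2s}/\bigl((q;q)_s^2\bigr)\cdot\tfrac12$ up to normalization, i.e.\ essentially $\qbinom{2s}s$ (symmetrized). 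Therefore
\[
\qbinom{2s}s\,d_j^{(s)}(q)
\]
is (a constant multiple of) the constant term of a Laurent polynomial with integer coefficients, and so lies in $\mathbb Z[q]$. I expect the care to lie in pinning down the exact normalization, possibly a factor $2$ or an $(1+q^s)$ from symmetrization, so that no denominator survives; the point is that $\qbinom{2s}s$ cancels the normalizing constant exactly. Multiplying by $q^j\qbinom m{2j}$ gives $\gamma^{(s)}_{n,j}(q)\in\mathbb Z[q]$. Then $\mathcal N^{(s)}_n(q,t)\in\mathbb Z[q,t]$ follows from \eqref{eq:KW-super-expansion}.
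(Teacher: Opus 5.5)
Your Steps 1--3 are correct and follow the paper's proof: coefficient comparison via $\qbinom mr\qbinom ns=\qbinom n{r+s}\qbinom{r+s}r$, then substitution of \eqref{eq:continuous-KW} at $\theta=s$, then $q$- versus $q^2$-Pochhammer bookkeeping. The gap is in Step 4, exactly where you defer to ``pinning down the normalization.'' Your key claim, that $\qbinom{2s}s$ cancels the normalizing constant exactly, is false. The $q$-Dyson theorem does not apply as you cite it. The two-variable $q$-Dyson identity evaluates $\operatorname{CT}_w[(w;q)_s(q/w;q)_s]=\qbinom{2s}s$, and that is a different Laurent polynomial from the Rogers weight $(w,w^{-1};q)_s$. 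The correct value is
\[
\operatorname{CT}_w\,(w,w^{-1};q)_s=\frac{2}{1+q^s}\qbinom{2s}s=2\qbinom{2s-1}{s}\qquad(s\ge1).
\]
Already at $s=1$ one gets $\operatorname{CT}(2-w-w^{-1})=2$, not $1+q$. Consequently
\[
\qbinom{2s}s d_j^{(s)}(q)=\frac{1+q^s}{2}\,\operatorname{CT}_w\bigl[(w,w^{-1};q)_sR_j(w)\bigr],\qquad R_j(w)=(-w,-w^{-1};q^2)_j .
\]
The factor $\tfrac12$ is a genuine denominator. As written, your argument only yields $\gamma_{n,j}^{(s)}(q)\in\tfrac12\mathbb Z[q]$, which gives polynomiality but not integrality.

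The missing idea is an evenness argument for this constant term. For $s\ge1$, write $(w,w^{-1};q)_s=(1-w)(1-w^{-1})(qw,q/w;q)_{s-1}$. Put $H(w)=(qw,q/w;q)_{s-1}R_j(w)\in\mathbb Z[q,w,w^{-1}]$, which is invariant under $w\mapsto w^{-1}$. Since $(1-w)(1-w^{-1})=2-w-w^{-1}$ and this symmetry gives $\operatorname{CT}[w^{-1}H]=\operatorname{CT}[wH]$, you get $\operatorname{CT}[(1-w)(1-w^{-1})H]=2\operatorname{CT}[(1-w)H]$. Hence
\[
\qbinom{2s}s d_j^{(s)}(q)=(1+q^s)\,\operatorname{CT}_w\bigl[(1-w)(qw,q/w;q)_{s-1}R_j(w)\bigr]\in\mathbb Z[q].
\]
For $s=0$ the normalization is $1$, and $\operatorname{CT}_wR_j\in\mathbb Z[q]$ directly. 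This is exactly how the paper closes the integrality step. With it, multiplying by $q^j\qbinom m{2j}$ and using \eqref{eq:KW-super-expansion} finishes your argument. You should also note that an identity between rational functions of $q$ valid for $0<q<1$ holds formally.
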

\begin{proof}
Cancelling $q$-factorials in \eqref{eq:qkernel-intro} gives
\[
 \qbinom{2s}s q^{2r}K_{m,r}^{(s)}(q)
 =q^{r(r+1)}
 \frac{\qbinom{2s}s}{\qbinom ns}
 \qbinom nr\qbinom n{r+s}.
\]
Coefficient comparison proves \eqref{eq:super-P-connection}.
Theorem~\ref{thm:closed-Rogers-qgamma} now proves
\eqref{eq:KW-super-expansion} and \eqref{eq:KW-coeff-Rogers}.
For the equality with \eqref{eq:KW-coeff-slides}, use $Q=q^2$ and
\[
 \mathsf C_j^{(s)}(Q)
 =\genfrac{[}{]}{0pt}{}{2s}{s}_Q
 \frac{(Q;Q)_{2j}}{(Q;Q)_j(Q^{s+1};Q)_j},
 \qquad
 \genfrac{[}{]}{0pt}{}{2s}{s}_Q
 \frac{(-q;q)_s^2}{(-q;q)_{2s}}=\qbinom{2s}s,
\]
together with
$(Q;Q)_{2j}/[(Q;Q)_j(-q;q)_{2j}]=(q;Q)_j$.

For integrality, put
$G_{s,j}(q)=\qbinom{2s}s d_j^{(s)}(q)$ and
$R_j(w)=(-w,-w^{-1};q^2)_j$.
At $\beta=q^s$ the Rogers weight telescopes to
$(w,w^{-1};q)_s$. Its normalization is
\[
 \operatorname{CT}_w(w,w^{-1};q)_s
 =\frac{2}{1+q^s}\qbinom{2s}s,
\]
including $s=0$; this follows from the Rogers normalization,
or the Askey--Wilson integral above at $j=0$.
Consequently,
\begin{equation}\label{eq:super-G-CT}
 G_{s,j}(q)=\frac{1+q^s}{2}
 \operatorname{CT}_w[(w,w^{-1};q)_sR_j(w)].
\end{equation}
For $s\ge1$, the Laurent polynomial
$H(w)=(qw,q/w;q)_{s-1}R_j(w)$ belongs to
$\mathbb Z[q,w,w^{-1}]$ and satisfies $H(w^{-1})=H(w)$.
Thus
\[
 \operatorname{CT}_w[(1-w)(1-w^{-1})H(w)]
 =2\operatorname{CT}_w[(1-w)H(w)],
\]
so \eqref{eq:super-G-CT} becomes
\begin{equation}\label{eq:super-G-integral}
 G_{s,j}(q)=(1+q^s)\operatorname{CT}_w
 [(1-w)(qw,q/w;q)_{s-1}R_j(w)]\in\mathbb Z[q].
\end{equation}
When $s=0$, \eqref{eq:super-G-CT} is simply
$G_{0,j}=\operatorname{CT}_wR_j\in\mathbb Z[q]$.
Finally,
$\gamma_{n,j}^{(s)}=q^j\qbinom m{2j}G_{s,j}$,
which proves both integrality assertions. The integral proof was
given for $0<q<1$; since the expressions are rational functions of
$q$, it also proves the formal identities.
\end{proof}

\begin{remark}[Positivity and attribution]\label{rem:KW-scope}
For every fixed $0<q<1$, the coefficients
$\gamma_{n,j}^{(s)}(q)$ are positive by
\eqref{eq:KW-coeff-Rogers}. They need not belong to $\mathbb N[q]$.
As already noted by Krattenthaler--Wachs
\cite[slides~58--59]{WachsSlides}, the first example $n=6$, $s=j=2$
gives
\[
 \gamma_{6,2}^{(2)}(q)
 =q^2(1+2q^2-q^3+2q^4-q^5+2q^6+q^8).
\]
The failure concerns the shifted-product expansion coefficients;
it does not contradict positivity of the ordinary coefficients
of $\mathcal N_n^{(s)}(q,t)$, as also discussed in
\cite{WachsSlides,RodeletTevlin}.
The identities \eqref{eq:KW-super-expansion}--\eqref{eq:KW-coeff-slides}
and their integrality are known. Here they follow from the Rogers
functional and the Ismail--Stanton expansion, while
\eqref{eq:continuous-KW} supplies the corresponding continuous
parameter formula for real $\theta\ge0$.
\end{remark}

\begin{remark}[Classical specialization and the Rodelet--Causse--Tevlin conjecture]
\label{rem:super-classical-HTP}
Fix an integer $s\ge0$ and define
$B_r^{(s)}(t)=\mathcal N_{s+r}^{(s)}(1,t)$ for $r\ge0$.
At $q=1$, \eqref{eq:super-P-connection} becomes
\begin{equation}\label{eq:super-classical-HTP-connection}
 B_r^{(s)}(t)=\binom{2s}s P_r^{(s)}(t),
 \qquad B_0^{(s)}(t)=\binom{2s}s.
\end{equation}
Consequently, every minor of the Hankel matrix
$[B_{i+j}^{(s)}(t)]_{i,j\ge0}$ has nonnegative coefficients in $t$.
Indeed, a minor of order $\ell$ is
$\binom{2s}s^\ell$ times the corresponding minor for the classical
moment sequence $P_r^{(s)}(t)$.
The direct Gegenbauer proof is given in
\cite{ZengNarayanaRogers}; the same conclusion also follows from
Theorem~\ref{ar:thm:main} by taking $q\uparrow1$ with $\beta=q^s$
for $s>0$, and from the limiting arcsine case for $s=0$.
Each fixed minor is a finite polynomial in finitely many recurrence
coefficients, so coefficientwise nonnegativity passes to the limit.
In particular, for $r\ge1$,
\[
 B_{r-1}^{(s)}(t)B_{r+1}^{(s)}(t)
 -\big(B_r^{(s)}(t)\big)^2\in\mathbb R_{\ge0}[t].
\]
This proves Conjecture~3 of Rodelet--Causse--Tevlin
\cite[Section~6.4, arXiv version]{RodeletTevlin} and strengthens it
to coefficientwise Hankel total positivity of all orders.
This consequence concerns the classical specialization $q=1$;
their coefficientwise $q$-log-concavity and $q$-unimodality
questions require separate arguments.
\end{remark}

\subsection{Pan's Touchard identity and a continuous-parameter extension}
\label{subsec:Pan}

The same Rogers integral also gives a short proof of Pan's
Touchard-type identity \cite[Theorem~1.1]{PanTouchard}. His identity
uses ordinary Catalan numbers and a nonstandard $q$-multinomial
coefficient, whereas the expansion in Theorem~\ref{thm:DKW-endpoints}
uses Gaussian $q$-Catalan polynomials. These are two expansions
of the same moment, in the ordinary monomial and Ismail--Stanton
bases, respectively.

Following Pan \cite[Eq.~(1.12)]{PanTouchard}, define
$\mathfrak M_{m;a,b,c}(q)$, for $a+b+c=m$, by
\begin{equation}\label{eq:Pan-multinomial}
 \prod_{r=0}^{m-1}(1+uq^r+vq^{2r})
 =\sum_{a+b+c=m}
 q^{\binom c2+\binom{b+c}2}
 \mathfrak M_{m;a,b,c}(q)\,u^bv^c.
\end{equation}
All three lower indices are nonnegative; other values give zero.
These are Pan's coefficients, rather than the usual quotient
of three $q$-factorials. They belong to $\mathbb N[q]$:
the least exponent in the coefficient of $u^bv^c$ on the left
is $2\binom c2+\sum_{r=c}^{b+c-1}r
=\binom c2+\binom{b+c}2$.

\begin{proposition}[A Rogers--Touchard identity]\label{prop:Rogers-Touchard}
Let $0<q<1$, $\theta\ge0$, and put
\[
 m_h^{(\theta)}(q)
 :=\Lfun_{q^\theta}[(2x)^{2h}],\qquad m_0^{(\theta)}(q)=1,
\]
with the limiting arcsine functional at $\theta=0$.
For $0\le k\le m$,
\begin{equation}\label{eq:Rogers-Touchard}
 q^{-k(k-1)}K_{m,k}^{(\theta)}(q)
 =\sum_{h=0}^{\min(k,m-k)}
 m_h^{(\theta)}(q)q^{h^2}
 \mathfrak M_{m;m-k-h,2h,k-h}(q).
\end{equation}
All summands are nonnegative for fixed $q$ and $\theta$ in this
range. At the two endpoints,
\begin{equation}\label{eq:Touchard-endpoint-moments}
 m_h^{(1)}(q)=\frac1{h+1}\binom{2h}{h},\qquad
 m_h^{(0)}(q)=\binom{2h}{h}.
\end{equation}
\end{proposition}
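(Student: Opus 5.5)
Our plan is to expand the Askey--Wilson basis product in ordinary powers of $x$ and then apply the Rogers functional. The starting point is the moment representation of Theorem~\ref{thm:moment},
\[
 P_m^{(\theta)}(t;q)=\Lfun_{q^\theta}\bigl[(-\sqrt t\,z,-\sqrt t/z;q)_m\bigr],
\]
which holds for $\theta\ge0$ with the limiting arcsine functional at $\theta=0$. By \eqref{eq:AWkernel}, the integrand equals $\prod_{r=0}^{m-1}(1+uq^r+vq^{2r})$ with $u=2\sqrt t\,x$ and $v=t$. Pan's definition \eqref{eq:Pan-multinomial} therefore gives the finite polynomial identity
\[
 (-\sqrt t\,z,-\sqrt t/z;q)_m=\sum_{a+b+c=m}q^{\binom c2+\binom{b+c}2}\mathfrak M_{m;a,b,c}(q)\,(2x)^b\,t^{b/2+c}.
\]
This identity is exact in $x$ and $\sqrt t$, so no analytic issue arises.

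We then apply $\Lfun_{q^\theta}$ term by term. The functional is symmetric, so odd $b$ drop out. With $b=2h$, what survives is $m_h^{(\theta)}(q)$ times $t^{h+c}$. We extract the coefficient of $t^k$ by setting $c=k-h$ and $a=m-k-h$; the constraint $a,c\ge0$ gives the range $0\le h\le\min(k,m-k)$. The $q$-power is
\[
 \binom{k-h}2+\binom{k+h}2=k(k-1)+h^2,
\]
which is a one-line computation. Comparing with $K_{m,k}^{(\theta)}(q)$ from \eqref{eq:mainmoment} and dividing by $q^{k(k-1)}$ yields \eqref{eq:Rogers-Touchard}.

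Nonnegativity of the summands follows from two facts. First, $m_h^{(\theta)}(q)$ is an even moment of a positive probability measure, and this includes the arcsine limit. Second, $\mathfrak M\in\mathbb N[q]$, as noted after \eqref{eq:Pan-multinomial}, and $q>0$.

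For the endpoint moments we use the constant-term descriptions from the proof of Theorem~\ref{thm:DKW-endpoints}: $\Lfun_1[F]=\operatorname{CT}_zF$ and $\Lfun_q[F]=\operatorname{CT}_z[(1-z^2)F]$. Taking $F=(z+z^{-1})^{2h}$, the arcsine case gives $\binom{2h}h$. The semicircle case gives
\[
 \binom{2h}h-\binom{2h}{h-1}=\frac1{h+1}\binom{2h}h.
\]
We should note that these endpoint moments are independent of $q$: the $q$-dependence cancels because the weights at $\beta=1$ and $\beta=q$ telescope to $q$-free functions. Equivalently, one can check this through the Dyck path description of $4^k\mu_{2k}$ with $r_1=2$, $r_j=1$, respectively all $r_j=1$.

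We expect the main obstacle to be bookkeeping rather than any conceptual step. We must confirm that the minimal-exponent normalization in Pan's coefficient matches the exponent $k(k-1)+h^2$ exactly. We must also confirm that the symmetrized functional convention makes $\Lfun[(2x)^{2h}]$ coincide with $m_h$. Both checks are direct once the substitution $c=k-h$ is fixed.
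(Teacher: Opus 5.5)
Your proposal is correct and follows the paper's proof essentially step for step. You substitute $u=2\sqrt t\,x$, $v=t$ in Pan's generating product, apply the symmetric Rogers functional, keep $b=2h$, set $c=k-h$, and compare with Theorem~\ref{thm:moment}, with positivity coming from the positive measure and $\mathfrak M\in\mathbb N[q]$. Your explicit constant-term evaluation of the arcsine and semicircle moments simply spells out the endpoint step that the paper states in one line.
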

\begin{proof}
In \eqref{eq:Pan-multinomial}, substitute $u=2x\sqrt t$ and $v=t$.
The product becomes
$(-\sqrt t\,z,-\sqrt t/z;q)_m$.
Apply the symmetric Rogers functional and extract $[t^k]$.
Only even $b=2h$ survive, with $c=k-h$ and $a=m-k-h$.
The exponent of $q$ is
\[
 \binom{k-h}{2}+\binom{k+h}{2}=k(k-1)+h^2.
\]
The moment formula in Theorem~\ref{thm:moment} now gives
\eqref{eq:Rogers-Touchard} after division by $q^{k(k-1)}$.
Positivity follows from the positive measure and
$\mathfrak M_{m;a,b,c}(q)\in\mathbb N[q]$.
At $\theta=1$ the functional is semicircle, while at $\theta=0$
it is arcsine. Their even moments in the variable $2x$ are,
respectively, the ordinary Catalan and central binomial numbers,
proving \eqref{eq:Touchard-endpoint-moments}.
\end{proof}

\begin{corollary}[Pan's Touchard-type identity]\label{cor:Pan-Touchard}
For $n\ge1$ and $0\le k\le n-1$,
\begin{equation}\label{eq:Pan-Touchard}
 \frac1{[n]_q}\qbinom nk\qbinom n{k+1}
 =\sum_{h=0}^{\min(k,n-1-k)}
 \frac1{h+1}\binom{2h}{h}q^{h^2}
 \mathfrak M_{n-1;n-1-k-h,2h,k-h}(q).
\end{equation}
Consequently,
\begin{equation}\label{eq:Pan-Catalan}
 \operatorname{Cat}_n(q)
 =\sum_{k=0}^{n-1}\sum_{h=0}^{\min(k,n-1-k)}
 \frac1{h+1}\binom{2h}{h}q^{k(k+1)+h^2}
 \mathfrak M_{n-1;n-1-k-h,2h,k-h}(q).
\end{equation}
\end{corollary}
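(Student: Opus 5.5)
The plan is to specialize Proposition~\ref{prop:Rogers-Touchard} to the endpoint $\theta=1$ with $m=n-1$. There the moments are $m_h^{(1)}(q)=\frac1{h+1}\binom{2h}h$, by \eqref{eq:Touchard-endpoint-moments}. With these moments, the right-hand side of \eqref{eq:Rogers-Touchard} is exactly the right-hand side of \eqref{eq:Pan-Touchard}: the summation range $h\le\min(k,n-1-k)$ and the indices $\mathfrak M_{n-1;n-1-k-h,2h,k-h}$ match verbatim. So the first identity reduces to showing
\[
 q^{-k(k-1)}K_{n-1,k}^{(1)}(q)=\frac1{[n]_q}\qbinom nk\qbinom n{k+1}.
\]

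For this I would use Corollary~\ref{cor:typeA} with $n$ replaced by $n-1$, which gives
\[
 q^{-k(k-1)}K^{(1)}_{n-1,k}(q)=\frac{[n]_q}{[k+1]_q[n-k]_q}\qbinom{n-1}k^2.
\]
Next I would apply the two elementary relations
\[
 \qbinom{n-1}k=\frac{[n-k]_q}{[n]_q}\qbinom nk,\qquad
 \qbinom{n-1}k=\frac{[k+1]_q}{[n]_q}\qbinom n{k+1}.
\]
Substituting one of these for each factor of the square cancels $[k+1]_q[n-k]_q$ and leaves one factor $[n]_q^{-1}$. This is exactly the $q$-Narayana number on the left of \eqref{eq:Pan-Touchard}, so \eqref{eq:Pan-Touchard} follows.

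For \eqref{eq:Pan-Catalan}, I would use the standard $q$-Narayana decomposition of the MacMahon $q$-Catalan number,
\[
 \operatorname{Cat}_n(q)=\sum_{k=0}^{n-1}q^{k(k+1)}\frac1{[n]_q}\qbinom nk\qbinom n{k+1}.
\]
Multiplying \eqref{eq:Pan-Touchard} by $q^{k(k+1)}$ and summing over $k$ then gives \eqref{eq:Pan-Catalan}.

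This decomposition is the only step that needs a check beyond the paper, and it is the main obstacle. In the paper's language, the sum is $P_{n-1}^{(1)}(q^2;q)$, i.e.\ $\mathcal N_n^{(1)}(q,1)/[2]_q$ from \eqref{eq:super-P-connection}; equivalently, $q^{2k}K^{(1)}_{n-1,k}=q^{k(k+1)}\frac1{[n]_q}\qbinom nk\qbinom n{k+1}$. I would prove the evaluation at $t=1$ by the $q$-Vandermonde (Chu--Vandermonde) sum applied to the ${}_2\phi_1$ form \eqref{eq:2phi1} at $\beta=q$ and $t=q^2$. Alternatively, I would simply cite the known Fürlinger--Hofbauer identity for $q$-Narayana numbers and MacMahon's $q$-Catalan.
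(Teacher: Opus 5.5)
Your proposal is correct and matches the paper's proof: specialize Proposition~\ref{prop:Rogers-Touchard} at $\theta=1$, $m=n-1$, identify $q^{-k(k-1)}K^{(1)}_{n-1,k}(q)$ with $\frac1{[n]_q}\qbinom nk\qbinom n{k+1}$, then multiply by $q^{k(k+1)}$ and sum using the $q$-Chu--Vandermonde evaluation, which the paper simply cites as standard. Your proposed check of that evaluation through \eqref{eq:2phi1} at $\beta=q$, $t=q^2$ works, because the argument $q^{2n+1}$ equals $cq^N/b$ for $N=n-1$, $b=q^{-n}$, $c=q^2$, and so the sum is $(q^{n+2};q)_{n-1}/(q^2;q)_{n-1}=\operatorname{Cat}_n(q)$.
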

\begin{proof}
Take $m=n-1$ and $\theta=1$ in
Proposition~\ref{prop:Rogers-Touchard}. The left-hand side is
\[
 q^{-k(k-1)}K_{n-1,k}^{(1)}(q)
 =\frac1{[n]_q}\qbinom nk\qbinom n{k+1}.
\]
This proves \eqref{eq:Pan-Touchard}, which is Pan's
\cite[Eq.~(1.13)]{PanTouchard}. Multiplication by $q^{k(k+1)}$
and summation over $k$ gives \eqref{eq:Pan-Catalan}, using the
standard $q$-Chu--Vandermonde evaluation
\[
 \sum_{k=0}^{n-1}\frac{q^{k(k+1)}}{[n]_q}
 \qbinom nk\qbinom n{k+1}=\operatorname{Cat}_n(q).
\]
\end{proof}

\begin{remark}[Two bases for one functional]
Pan proves \eqref{eq:Pan-Touchard} combinatorially, through
statistics on Catalan paths \cite[Sections~3--4]{PanTouchard}.
The proof above recovers the identity from the Rogers moment
formula and ordinary even moments. Theorem~\ref{thm:DKW-endpoints}
instead uses the even $q$-Taylor polynomials $\rho_{2h}$.
Thus the ordinary Catalan coefficients in Pan's formula and
the Gaussian $q$-Catalan coefficients in the
Dilks--Krattenthaler--Wachs formula arise from two polynomial
bases integrated against the same semicircle functional.
Proposition~\ref{prop:Rogers-Touchard} extends the former identity
to the continuous Rogers parameter $\theta\ge0$; it does not
assert polynomial dependence on $q$ of $m_h^{(\theta)}(q)$ for
arbitrary $\theta$.
\end{remark}

\section{Heine's continued fraction and a Stieltjes structure}\label{sec:Heine}

Put
\[
 V_c(z)
 ={}_2\phi_1\!\left(\begin{matrix}0,0\\c\end{matrix};q,z\right)
 =\sum_{n\ge0}\frac{z^n}{(q;q)_n(c;q)_n}.
\]
Rather than quote the general $q$-Gauss continued fraction with its several normalizations, we derive the specialization needed here directly from Heine's contiguous relation.  The standard identity
\[
 {}_2\phi_1\!\left(\begin{matrix}a,b\\c/q\end{matrix};q,z\right)
 -{}_2\phi_1\!\left(\begin{matrix}a,b\\c\end{matrix};q,z\right)
 =cz\frac{(1-a)(1-b)}{(q-c)(1-c)}
 {}_2\phi_1\!\left(\begin{matrix}aq,bq\\cq\end{matrix};q,z\right)
\]
specializes at $a=b=0$, after replacing $c$ by $cq$, to
\begin{equation}\label{eq:contiguousVc}
 V_c(z)-V_{cq}(z)
 =\frac{cz}{(1-c)(1-cq)}V_{cq^2}(z).
\end{equation}
Let
\[
 R_c(z)=\frac{V_c(z)}{V_{cq}(z)}.
\]
Dividing~\eqref{eq:contiguousVc} by $V_{cq}(z)$ gives
\[
 R_c(z)=1+\frac{\lambda_1(c,q)z}{R_{cq}(z)},
 \qquad
 \lambda_1(c,q)=\frac{c}{(1-c)(1-cq)}.
\]
Iterating this identity yields the Heine--Gauss continued fraction
\begin{equation}\label{eq:HeineCF}
 \frac{V_c(z)}{V_{cq}(z)}
 =1+\cfrac{\lambda_1z}
 {1+\cfrac{\lambda_2z}
 {1+\cfrac{\lambda_3z}{1+\ddots}}},
\end{equation}
where
\begin{equation}\label{eq:lambda}
 \lambda_m(c,q)
 =\frac{cq^{m-1}}
 {(1-cq^{m-1})(1-cq^m)},
 \qquad m\ge1.
\end{equation}
Notice the useful telescoping form
\begin{equation}\label{eq:lambda-telescope}
 \lambda_m(c,q)
 =\frac{1}{1-q}\left(
 \frac{1}{1-cq^{m-1}}-\frac{1}{1-cq^m}
 \right).
\end{equation}
For $0<c<1$, all the coefficients in~\eqref{eq:HeineCF} are positive.

Replacing $z$ by $-z$ and taking reciprocals gives the genuine Stieltjes fraction
\begin{equation}\label{eq:Sfraction}
 \frac{V_{cq}(-z)}{V_c(-z)}
 =\cfrac{1}{1-\cfrac{\lambda_1z}
 {1-\cfrac{\lambda_2z}
 {1-\cfrac{\lambda_3z}{1-\ddots}}}}.
\end{equation}
Consequently, if
\[
 \frac{V_{cq}(-z)}{V_c(-z)}
 =\sum_{n\ge0}M_n(c,q)z^n,
\]
then $(M_n(c,q))_{n\ge0}$ is a Stieltjes moment sequence.  In particular, all Hankel minors are nonnegative.  The principal Hankel determinants are
\begin{equation}\label{eq:HankelM}
 \det[M_{i+j}]_{0\le i,j\le n}
 =\prod_{j=1}^n(\lambda_{2j-1}\lambda_{2j})^{n+1-j}.
\end{equation}
The contracted Jacobi fraction has
\[
 b_0=\lambda_1,
 \qquad
 b_n=\lambda_{2n}+\lambda_{2n+1},
 \qquad
 \Lambda_n=\lambda_{2n-1}\lambda_{2n}>0.
\]
This provides a weighted Motzkin-path interpretation in the spirit of Viennot and the moment models used by Kim and Stanton~\cite{KimStanton}.

The Stieltjes fraction also gives an explicit positive formula.  Let $\mathcal D_n$ denote the set of Dyck paths of semilength $n$, and let $h(d)$ be the height of the initial vertex of a down-step $d$.  Then the standard Dyck-path interpretation of an S-fraction gives
\begin{equation}\label{eq:DyckM}
 M_n(c,q)=\sum_{D\in\mathcal D_n}
 \prod_{d\in\operatorname{Down}(D)}
 \frac{cq^{h(d)-1}}{(1-cq^{h(d)-1})(1-cq^{h(d)})}.
\end{equation}
In particular, the positivity of the moments $M_n(c,q)$ is termwise visible for $0<c,q<1$.  Formula~\eqref{eq:DyckM} follows directly from the S-fraction and gives a positive path expression for the coefficients of the contiguous ratio studied by Kim and Stanton, as discussed next.

\subsection{Relation with the Kim--Stanton \texorpdfstring{$q$}{q}-Kishore theory}

The same continued fraction occurs explicitly in the later work of Kim and Stanton~\cite{KimStantonBessel}.  In their equation~(6.9), with our notation,
\[
 \frac{V_{cq}(z)}{V_c(z)}
 =\cfrac{1}{1-\cfrac{a_1z}{1-\cfrac{a_2z}{1-\ddots}}},
 \qquad
 a_m=-\frac{cq^{m-1}}{(1-cq^{m-1})(1-cq^m)}.
\]
Thus $a_m=-\lambda_m(c,q)$, so their fraction is exactly equivalent to~\eqref{eq:HeineCF}; after $z\mapsto-z$ it is precisely the Stieltjes fraction~\eqref{eq:Sfraction}.

When $c=q^{\nu+1}$, Theorem~6.8 of~\cite{KimStantonBessel} identifies an ordinary symmetric orthogonal-polynomial family with recurrence coefficient
\[
 \lambda_m'= 
 \frac{q^{\nu+m}}{(1-q^{\nu+m})(1-q^{\nu+m+1})}
 =\lambda_m(c,q).
\]
More explicitly, let
\begin{equation}\label{eq:KSsymOP}
 H_0(x)=1,\qquad H_1(x)=x,\qquad
 H_{m+1}(x)=xH_m(x)-\lambda_m(c,q)H_{m-1}(x)\quad(m\ge1).
\end{equation}
This is the usual monic orthogonal family singled out in their Theorem~6.8, up to the elementary change of variable used there for the Jackson $q$-Bessel ratio.

The Stieltjes moments $(M_n(c,q))$ are obtained by quadratic contraction of this symmetric family.  Indeed, setting
\[
 Q_n(y)=H_{2n}(\sqrt y)
\]
gives
\begin{equation}\label{eq:KScontractedOP}
 Q_{n+1}(y)=(y-b_n)Q_n(y)-\Lambda_n Q_{n-1}(y),
\end{equation}
where
\[
 b_0=\lambda_1,\qquad
 b_n=\lambda_{2n}+\lambda_{2n+1}\quad(n\ge1),
 \qquad
 \Lambda_n=\lambda_{2n-1}\lambda_{2n}.
\]
These are precisely the Jacobi parameters obtained by contracting the S-fraction~\eqref{eq:Sfraction}.  Thus the orthogonal polynomials attached to $V_{cq}(-z)/V_c(-z)$ are explicitly identified: they are the quadratic contraction of the symmetric family~\eqref{eq:KSsymOP}.

There is, however, a more precise placement of the symmetric family in the Askey--Wilson hierarchy.  It appears as a singular scaling limit of the associated continuous $q$-ultraspherical polynomials of Bustoz--Ismail and Ismail--Stanton~\cite{BustozIsmail,IsmailStanton2002}.  This gives a concrete realization within the associated Askey--Wilson hierarchy.

\begin{theorem}[Confluent associated Rogers realization]\label{thm:assocRogersLimit}
Let $R_m(y;\alpha,\beta)$ denote the monic form, in the variable $y=2x$, of the associated continuous $q$-ultraspherical polynomial $C_m^{(\alpha)}(x;\beta\mid q)$.  Thus
\begin{equation}\label{eq:assocRogersMonic}
 yR_m(y;\alpha,\beta)
 =R_{m+1}(y;\alpha,\beta)
 +\Lambda_m(\alpha,\beta)R_{m-1}(y;\alpha,\beta),
\end{equation}
where
\begin{equation}\label{eq:assocRogersLambda}
 \Lambda_m(\alpha,\beta)
 =\frac{(1-\alpha q^m)(1-\alpha\beta^2q^{m-1})}
 {(1-\alpha\beta q^{m-1})(1-\alpha\beta q^m)}.
\end{equation}
Fix $0<c<1$ and, for $\varepsilon>0$, set
\begin{equation}\label{eq:assocScaling}
 \alpha_\varepsilon=-\frac{c}{q\varepsilon^2},
 \qquad
 \beta_\varepsilon=-q\varepsilon^2.
\end{equation}
Define
\begin{equation}\label{eq:H-epsilon}
 H_m^{(\varepsilon)}(x)
 :=\varepsilon^m
 R_m\!\left(\frac{x}{\varepsilon};
 \alpha_\varepsilon,\beta_\varepsilon\right).
\end{equation}
Then, coefficientwise for every fixed $m$,
\begin{equation}\label{eq:H-limit}
 H_m^{(\varepsilon)}(x)\longrightarrow H_m(x)
 \qquad(\varepsilon\to0^+),
\end{equation}
where $H_m$ is the Kim--Stanton family~\eqref{eq:KSsymOP}.
\end{theorem}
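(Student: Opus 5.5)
The plan is to transform the three-term recurrence \eqref{eq:assocRogersMonic} under the rescaling \eqref{eq:H-epsilon} and check that its coefficients converge to those of \eqref{eq:KSsymOP}. Substituting $y=x/\varepsilon$ and multiplying the recurrence by $\varepsilon^{m+1}$ gives
\[
 xH_m^{(\varepsilon)}(x)=H_{m+1}^{(\varepsilon)}(x)
 +\varepsilon^2\Lambda_m(\alpha_\varepsilon,\beta_\varepsilon)H_{m-1}^{(\varepsilon)}(x),
\]
with $H_0^{(\varepsilon)}=1$ and $H_1^{(\varepsilon)}=x$, because $R_0=1$ and $R_1=y$. It therefore suffices to show that, for each fixed $m\ge1$,
\[
 \varepsilon^2\Lambda_m(\alpha_\varepsilon,\beta_\varepsilon)\to\lambda_m(c,q)
 \qquad(\varepsilon\to0^+).
\]
Induction on $m$ then gives coefficientwise convergence, since each $H_m^{(\varepsilon)}$ is a polynomial in $x$ whose coefficients are polynomials in finitely many of these rescaled recurrence coefficients.

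The key computation evaluates the four factors in \eqref{eq:assocRogersLambda} under the scaling \eqref{eq:assocScaling}. We have $\alpha\beta=c$, independent of $\varepsilon$, so the denominator is exactly $(1-cq^{m-1})(1-cq^m)$. In the numerator, $\alpha\beta^2q^{m-1}=c\beta q^{m-1}=-c\varepsilon^2q^m$, so $1-\alpha\beta^2q^{m-1}\to1$. Also $1-\alpha q^m=1+cq^{m-1}\varepsilon^{-2}$, so $\varepsilon^2(1-\alpha q^m)\to cq^{m-1}$. The product therefore tends to $cq^{m-1}/\bigl((1-cq^{m-1})(1-cq^m)\bigr)=\lambda_m(c,q)$, which is \eqref{eq:lambda}.

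The main point to check is conventions rather than analysis. I need to confirm that the monic recurrence \eqref{eq:assocRogersMonic} with \eqref{eq:assocRogersLambda} is valid as stated, including the initial conditions for the associated family, namely $R_{-1}=0$ and $R_0=1$. I also need to confirm that the denominators $(1-cq^{m-1})(1-cq^m)$ never vanish; this holds because $0<c<1$. Since the parameters $\alpha_\varepsilon,\beta_\varepsilon$ lie outside any positivity range, the argument must stay purely algebraic and use only the recurrence, with no appeal to measures. That is sufficient, because the claimed convergence is coefficientwise for each fixed $m$.
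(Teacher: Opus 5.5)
Your proposal is correct and follows the paper's proof essentially step for step. You rescale the monic recurrence to $xH_m^{(\varepsilon)}=H_{m+1}^{(\varepsilon)}+\varepsilon^2\Lambda_m(\alpha_\varepsilon,\beta_\varepsilon)H_{m-1}^{(\varepsilon)}$, use $\alpha_\varepsilon\beta_\varepsilon=c$ and $\alpha_\varepsilon\beta_\varepsilon^2q^{m-1}=-cq^m\varepsilon^2$ to get $\varepsilon^2\Lambda_m\to\lambda_m(c,q)$, and conclude by induction from $H_0^{(\varepsilon)}=1$ and $H_1^{(\varepsilon)}=x$.
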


\begin{proof}
The associated continuous $q$-ultraspherical recurrence is
\[
 2x(1-\alpha\beta q^m)C_m^{(\alpha)}(x;\beta\mid q)
 =(1-\alpha q^{m+1})C_{m+1}^{(\alpha)}(x;\beta\mid q)
 +(1-\alpha\beta^2q^{m-1})C_{m-1}^{(\alpha)}(x;\beta\mid q).
\]
Passing to the monic normalization in $y=2x$ gives~\eqref{eq:assocRogersLambda}.  Under~\eqref{eq:assocScaling},
\[
 \alpha_\varepsilon\beta_\varepsilon=c,
 \qquad
 \alpha_\varepsilon\beta_\varepsilon^2=-cq\varepsilon^2,
\]
so that
\begin{equation}\label{eq:epsLambda}
 \varepsilon^2\Lambda_m(\alpha_\varepsilon,\beta_\varepsilon)
 =\frac{(\varepsilon^2+cq^{m-1})(1+cq^m\varepsilon^2)}
 {(1-cq^{m-1})(1-cq^m)}
 \longrightarrow
 \frac{cq^{m-1}}{(1-cq^{m-1})(1-cq^m)}
 =\lambda_m(c,q).
\end{equation}
After the rescaling~\eqref{eq:H-epsilon}, recurrence~\eqref{eq:assocRogersMonic} becomes
\[
 xH_m^{(\varepsilon)}(x)
 =H_{m+1}^{(\varepsilon)}(x)
 +\varepsilon^2\Lambda_m(\alpha_\varepsilon,\beta_\varepsilon)
 H_{m-1}^{(\varepsilon)}(x).
\]
Since $H_0^{(\varepsilon)}=1$ and $H_1^{(\varepsilon)}=x$, the coefficientwise convergence follows inductively from~\eqref{eq:epsLambda}.
\end{proof}

\begin{remark}
The parameters in~\eqref{eq:assocScaling} leave the usual bounded parameter range in which the associated Rogers orthogonality is normally stated.  The theorem is therefore a singular algebraic (or confluent) limit of the recurrence, not a limit taken inside the classical orthogonality region.  Nevertheless, for every fixed $\varepsilon>0$ the monic coefficients in~\eqref{eq:assocRogersLambda} are positive in this specialization, and the limiting recurrence is exactly the positive Kim--Stanton recurrence.  Thus the family has two complementary descriptions: it is $q$-Bessel/$q$-Lommel-like from the continued-fraction side, and a confluent associated Rogers family from the Askey--Wilson side.
\end{remark}

\begin{proposition}[Associated Askey--Wilson realization]
\label{prop:AWlift}
Let $p_m^{[\gamma]}(x;a,b,c,d\mid q)$ denote the associated
Askey--Wilson family obtained by shifting the index $m$ to
$m+\gamma$ in its three-term recurrence, and put
$\alpha=q^\gamma$ and $\rho^2=\beta$. Under the Rogers specialization
\begin{equation}\label{eq:RogersAWparams}
 (a,b,c,d)=\bigl(\rho,-\rho,\rho q^{1/2},-\rho q^{1/2}\bigr),
\end{equation}
the monic associated Askey--Wilson recurrence coincides with
\eqref{eq:assocRogersMonic}--\eqref{eq:assocRogersLambda}.
Consequently, the symmetric orthogonal-polynomial family
\eqref{eq:KSsymOP} is a singular scaling limit of this
associated Askey--Wilson specialization.
\end{proposition}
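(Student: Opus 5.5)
The plan is to compute the monic recurrence coefficients of the associated Askey--Wilson family directly. Then I would substitute the Rogers parameters \eqref{eq:RogersAWparams} and compare with \eqref{eq:assocRogersLambda}. Recall that the Askey--Wilson polynomials $p_m(x;a,b,c,d\mid q)$, in the variable $y=2x$, satisfy a monic three-term recurrence
\[
 yP_m=P_{m+1}+B_mP_m+\Lambda_m^{AW}P_{m-1}.
\]
The coefficient $B_m=a+a^{-1}-A_m-C_m$ is built from the standard quantities
\[
 A_m=\frac{(1-abq^m)(1-acq^m)(1-adq^m)(1-abcdq^{m-1})}{a(1-abcdq^{2m-1})(1-abcdq^{2m})},
\]
together with $C_m$, and $\Lambda^{AW}_m=A_{m-1}C_m$ (see \cite[Section~14.1]{KoekoekLeskySwarttouw}). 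The associated family of order $\gamma$ is defined by replacing $q^m$ by $q^{m+\gamma}=\alpha q^m$ in these coefficients, keeping the initial conditions $P_{-1}=0$, $P_0=1$. The proof then reduces to an identity of rational functions in $\alpha,\rho,q$.

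First I would note that under \eqref{eq:RogersAWparams} one has $abcd=\beta^2q$. I would also check that the parameter set is invariant under $(a,b,c,d)\mapsto(-a,-b,-c,-d)$ combined with a permutation. This forces $B_m=0$ even after the shift $m\mapsto m+\gamma$, because the shifted recurrence inherits the symmetry $P_m(-y)=(-1)^mP_m(y)$. If a cleaner route is wanted, I would verify $B_m=0$ by direct algebra, pairing the factors $(1\pm\rho^2q^{\cdot})$.

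Next I would compute $\Lambda_m^{AW}$ with $q^m\to\alpha q^m$. The products $(1-abq^m)(1-acq^m)(1-adq^m)$ and the analogous ones in $C_m$ collapse via $(1-\beta x)(1+\beta x)=1-\beta^2x^2$ and $(1-\rho^2\sqrt q\,x)(1+\rho^2\sqrt q\,x)$-type pairings. Numerator and denominator should then telescope against the factors $(1-\alpha^2\beta^2q^{2m-1})$ and $(1-\alpha^2\beta^2 q^{2m})$, which split as $(1\pm\alpha\beta q^{m-1/2})$ and $(1\pm\alpha\beta q^m)$. What should remain is exactly
\[
 \frac{(1-\alpha q^m)(1-\alpha\beta^2q^{m-1})}{(1-\alpha\beta q^{m-1})(1-\alpha\beta q^m)}.
\]
The main obstacle is this bookkeeping, together with the normalization convention for the associated family at low index. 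The $m=0$ term and the first coefficient must match the associated Rogers initial conditions. I would guard against errors by first checking $\alpha=1$, where the answer must reduce to $r_m$ of \eqref{ar:eq:r}, already known from the Rogers case. Then, since the Askey--Wilson coefficients depend on $m$ only through $q^m$, the replacement $q^m\to\alpha q^m$ in $r_m$ gives \eqref{eq:assocRogersLambda} immediately. This argument neatly avoids the full simplification.

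Finally, the two recurrences agree together with their initial data, so the monic families coincide. The consequence about \eqref{eq:KSsymOP} then follows by composing this identification with Theorem~\ref{thm:assocRogersLimit}, using the scaling \eqref{eq:assocScaling}. I would remark that this is again only an algebraic identification of recurrences, since $\alpha_\varepsilon,\beta_\varepsilon$ lie outside the orthogonality range.
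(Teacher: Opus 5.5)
Your proposal is correct and follows essentially the same route as the paper: substitute the Rogers parameters into the Askey--Wilson coefficients shifted by $q^m\mapsto\alpha q^m$, observe that the monic diagonal vanishes and that $A_{m+\gamma-1}C_{m+\gamma}$ reduces to $\Lambda_m(\alpha,\beta)$, then invoke Theorem~\ref{thm:assocRogersLimit}. Your shortcut through the $\alpha=1$ case is also legitimate. Under the specialization one has $A_n=(1-\beta^2q^n)/(\rho(1-\beta q^n))$ and $C_n=\rho(1-q^n)/(1-\beta q^n)$, which are rational in $q^n$ and give $A_n+C_n=\rho+\rho^{-1}$ identically. Agreement with $r_m$ at infinitely many values $q^m$ therefore persists after $q^m\mapsto\alpha q^m$, and this settles the diagonal without any circularity in the parity argument.
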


\begin{proof}
The associated Askey--Wilson polynomials are constructed by shifting
recurrence coefficients as in Ismail--Rahman~\cite{IsmailRahmanAW}.
Under~\eqref{eq:RogersAWparams}, the monic diagonal coefficient
vanishes and the subdiagonal coefficient at index $m$ is
\[
 A_{m+\gamma-1}C_{m+\gamma}
 =\frac{(1-q^{m+\gamma})(1-\beta^2q^{m+\gamma-1})}
 {(1-\beta q^{m+\gamma-1})(1-\beta q^{m+\gamma})}.
\]
This is exactly $\Lambda_m(\alpha,\beta)$ upon substituting
$\alpha=q^\gamma$. The limiting claim follows from
Theorem~\ref{thm:assocRogersLimit}.
\end{proof}

\begin{remark}
The four-parameter Askey--Wilson family provides a natural ambient
family for this confluent realization. Although continuous
$q$-Hahn polynomials are another Askey--Wilson specialization,
a general phase change of the four parameters in
\eqref{eq:RogersAWparams} does not preserve the same recurrence:
its monic subdiagonal coefficients need not be phase-independent.
We therefore do not claim an identification with an arbitrary
phase-shifted continuous $q$-Hahn family.
\end{remark}

\section{A P\'olya-frequency theorem}

Let
\begin{equation}\label{eq:vndef}
 v_n(c)=\frac{1}{(q;q)_n(c;q)_n},
 \qquad 0<c<1.
\end{equation}
We now prove that $(v_n(c))_{n\ge0}$ is a PF$_\infty$ sequence.

Recall Jackson's first and second $q$-Bessel functions
\begin{align}
 J_\nu^{(1)}(x;q)
 &=\frac{(q^{\nu+1};q)_\infty}{(q;q)_\infty}
 \left(\frac{x}{2}\right)^\nu
 {}_2\phi_1\!\left(
 \begin{matrix}0,0\\q^{\nu+1}\end{matrix};q,-x^2/4\right),\label{eq:J1}\\
 J_\nu^{(2)}(x;q)
 &=\frac{(q^{\nu+1};q)_\infty}{(q;q)_\infty}
 \left(\frac{x}{2}\right)^\nu
 {}_0\phi_1\!\left(-;q^{\nu+1};q,-q^{\nu+1}x^2/4\right).
\end{align}
They satisfy the standard relation
\begin{equation}\label{eq:Jrelation}
 J_\nu^{(2)}(x;q)
 =(-x^2/4;q)_\infty J_\nu^{(1)}(x;q).
\end{equation}
These definitions and the relation~\eqref{eq:Jrelation} are also recorded in Section~3 of Kim and Stanton~\cite{KimStantonBessel}, where ratios of the first Jackson $q$-Bessel functions are studied from the $q$-Kishore and continued-fraction viewpoints.

Setting $c=q^{\nu+1}$ and $z=-x^2/4$ in~\eqref{eq:Jrelation} gives
\begin{equation}\label{eq:Vproductpre}
 (z;q)_\infty V_c(z)
 ={}_0\phi_1(-;c;q,cz).
\end{equation}

Baricz, Dimitrov and Mez\H{o}~\cite{BariczDimitrovMezo} prove that for $\nu>-1$ the normalized $J_\nu^{(2)}$ is an entire function of order zero with Hadamard factorization
\begin{equation}\label{eq:Hadamard}
 \mathcal J_\nu^{(2)}(x;q)
 =\prod_{r\ge1}
 \left(1-\frac{x^2}{j_{\nu,r}^2(q)}\right),
\end{equation}
where $j_{\nu,r}(q)>0$ are the positive zeros of $J_\nu^{(2)}$.
Consequently~\eqref{eq:Vproductpre} yields
\begin{equation}\label{eq:EdreiProduct}
 V_{q^{\nu+1}}(z)
 =\frac{
 \displaystyle\prod_{r\ge1}
 \left(1+\frac{4z}{j_{\nu,r}^2(q)}\right)}
 {\displaystyle\prod_{m\ge0}(1-q^mz)}.
\end{equation}

\begin{theorem}[P\'olya-frequency property]\label{thm:PF}
Let $0<q<1$ and $\nu>-1$.  Then
\[
 \left\{
 \frac{1}{(q;q)_n(q^{\nu+1};q)_n}
 \right\}_{n\ge0}
\]
is a PF$_\infty$ sequence.
\end{theorem}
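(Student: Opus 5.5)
The plan is to apply the Edrei (Aissen--Schoenberg--Whitney--Edrei) characterization of PF$_\infty$ sequences. A sequence $(a_n)_{n\ge0}$ with $a_0=1$ is PF$_\infty$ precisely when its generating function has the form
\[
 \sum_{n\ge0}a_nz^n=e^{\gamma z}\frac{\prod_{r\ge1}(1+\alpha_rz)}{\prod_{r\ge1}(1-\beta_rz)},
\]
with $\gamma\ge0$, $\alpha_r,\beta_r\ge0$, and $\sum(\alpha_r+\beta_r)<\infty$; see Karlin~\cite{Karlin}. By definition, the generating function of $v_n(c)$ with $c=q^{\nu+1}$ is exactly $V_{q^{\nu+1}}(z)$. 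So it suffices to check that the factorization \eqref{eq:EdreiProduct} is of Edrei type.

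The verification proceeds in the following order.

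\begin{enumerate}
\item Starting from \eqref{eq:Vproductpre}, write $V_c(z)={}_0\phi_1(-;c;q,cz)/(z;q)_\infty$. The denominator $(z;q)_\infty=\prod_{m\ge0}(1-q^mz)$ supplies $\beta_m=q^m$, and $\sum_m q^m<\infty$.
\item The numerator is an entire function of $z$. Its relation to $J_\nu^{(2)}$ in the variable $x$, through $z=-x^2/4$ and \eqref{eq:J1}, lets us invoke the Hadamard product \eqref{eq:Hadamard} of Baricz--Dimitrov--Mez\H{o}. Since that function has order zero and real zeros, it follows that ${}_0\phi_1(-;c;q,cz)=\prod_r(1+4z/j_{\nu,r}^2)$. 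There is no exponential factor, and the constant is $1$ because both sides equal $1$ at $z=0$. This supplies $\alpha_r=4/j_{\nu,r}^2(q)>0$ and $\gamma=0$.
\item Summability $\sum_r j_{\nu,r}^{-2}<\infty$ follows from convergence of the canonical product of genus zero. It can also be seen from the coefficient of $z$, which equals $c/((q;q)_1(c;q)_1)$ and is finite.
\item Edrei's theorem then gives PF$_\infty$, and positivity $v_n>0$ is clear for $0<c<1$.
\end{enumerate}

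The main point needing care is step 2. We must confirm that the Hadamard factorization cited holds for the full range $\nu>-1$, that all zeros $j_{\nu,r}$ are real (so the $\alpha_r$ are positive rather than complex), and that the normalization $\mathcal J^{(2)}_\nu$ matches ${}_0\phi_1(-;q^{\nu+1};q,-q^{\nu+1}x^2/4)$ exactly, with no stray factor from the variable scaling. If the rescaling by $q^{\nu+1}$ inside ${}_0\phi_1$ appears, it only rescales the zeros by a positive constant, so positivity of the $\alpha_r$ is unaffected.

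Finally, order zero guarantees the absence of an $e^{\gamma z}$ factor, although Edrei's theorem would permit such a factor with $\gamma\ge0$ in any case.
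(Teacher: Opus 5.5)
Your proposal is correct and is essentially the paper's own proof. Both read off the Edrei--Schoenberg form from \eqref{eq:EdreiProduct}, taking $\beta_m=q^m$ from $(z;q)_\infty$ and $\alpha_r=4/j_{\nu,r}^2(q)$ from the Baricz--Dimitrov--Mez\H{o} Hadamard product, and both use order zero for summability and for the absence of an exponential factor. Your normalization check also goes through exactly, with no rescaling of the zeros: the factor $q^{\nu+1}$ enters \eqref{eq:Vproductpre} and the definition of $J_\nu^{(2)}$ identically.
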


\begin{proof}
The product~\eqref{eq:EdreiProduct} has the Edrei--Schoenberg form
\[
 C e^{\gamma z}
 \frac{\prod_{r\ge1}(1+\alpha_rz)}
 {\prod_{m\ge0}(1-\beta_mz)}
\]
with
\[
 \alpha_r=\frac{4}{j_{\nu,r}^2(q)}>0,
 \qquad
 \beta_m=q^m>0.
\]
Since $\sum_m\beta_m<\infty$ and the numerator in~\eqref{eq:Hadamard} is an entire function of order zero, the required summability condition for the $\alpha_r$ also holds.  The Edrei--Schoenberg characterization of PF$_\infty$ sequences therefore applies; see, for example, Karlin~\cite{Karlin}.
\end{proof}

\section{Total positivity of the interpolation matrix}

We can now prove the main structural result.

\begin{theorem}[Total positivity]\label{thm:TP}
For $0<q<1$ and $\theta>-1$, the lower-triangular matrix
\[
 \mathcal K^{(\theta)}(q)
 =\bigl(K_{n,k}^{(\theta)}(q)\bigr)_{n,k\ge0}
\]
is totally positive.
\end{theorem}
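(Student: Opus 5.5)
The plan is to factor the kernel as a product of diagonal matrices and a Toeplitz matrix built from the Pólya-frequency sequence of Theorem~\ref{thm:PF}. Write $c=q^{\theta+1}\in(0,1)$, which is valid since $\theta>-1$. From \eqref{eq:qkernel-intro},
\[
 K_{n,k}^{(\theta)}(q)=q^{k(k-1)}\,(q;q)_n(c;q)_n\cdot
 \frac{1}{(q;q)_k(c;q)_k}\cdot
 \frac{1}{(q;q)_{n-k}(c;q)_{n-k}}
 =q^{k(k-1)}\,\frac{v_{n-k}(c)}{v_n(c)}\,v_k(c),
\]
with $v_n(c)$ as in \eqref{eq:vndef}. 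Hence
\[
 \mathcal K^{(\theta)}(q)=\diag\bigl(v_n(c)^{-1}\bigr)\,T\,\diag\bigl(q^{k(k-1)}v_k(c)\bigr),
 \qquad T=\bigl(v_{n-k}(c)\bigr)_{n,k\ge0},
\]
where $T$ is the lower-triangular Toeplitz matrix of the sequence $(v_m(c))_{m\ge0}$, with $v_m=0$ for $m<0$.

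Both diagonal factors have strictly positive entries, because $(q;q)_n>0$ and $(c;q)_n>0$ when $0<c,q<1$. Multiplying by such diagonal matrices rescales each minor by a positive factor, so it preserves the sign of every minor. Total positivity of $\mathcal K^{(\theta)}(q)$ is therefore equivalent to total positivity of $T$, that is, nonnegativity of all its minors.

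By definition, a sequence is PF$_\infty$ exactly when its Toeplitz matrix $T$ has all minors nonnegative. Theorem~\ref{thm:PF}, applied with $\nu=\theta>-1$ so that $q^{\nu+1}=c$, states precisely that $(v_m(c))_{m\ge0}$ is PF$_\infty$. Cauchy--Binet, or the direct scaling argument above, then completes the proof.

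Once Theorem~\ref{thm:PF} is available, there is no real obstacle. The substantive input, the Edrei--Schoenberg representation \eqref{eq:EdreiProduct} resting on the Hadamard factorization of $J_\nu^{(2)}$, has already been established. The remaining points to check are bookkeeping. First, the factor $q^{k(k-1)}$ depends only on the column, so it is absorbed into the right diagonal factor. Second, the index conventions must match, so that $T$ is exactly the Toeplitz matrix of $v$ with zeros above the diagonal. Third, positivity of $(c;q)_n$ holds throughout $-1<\theta\le0$ as well as $\theta>0$, since $c=q^{\theta+1}<1$ in every case.

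If one wants strict positivity of the minors that are not forced to vanish, an additional argument is needed beyond the PF$_\infty$ property. The statement asks only for total positivity in the nonnegative-minor sense.
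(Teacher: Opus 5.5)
Your proposal is correct and is essentially the paper's own proof. The paper writes $K_{n,k}^{(\theta)}(q)=B_n\,v_{n-k}\,q^{k(k-1)}/B_k$ with $B_n=(q;q)_n(q^{\theta+1};q)_n=1/v_n$, factors $\mathcal K^{(\theta)}(q)=D_1T(v)D_2$ with positive diagonal matrices $D_1,D_2$, and then obtains total positivity of the Toeplitz factor $T(v)$ from Theorem~\ref{thm:PF}, with the same nonnegative-minor meaning of ``totally positive'' that you use.
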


\begin{proof}
Put
\[
 B_n=(q;q)_n(q^{\theta+1};q)_n,
 \qquad
 v_n=\frac1{B_n}.
\]
Then~\eqref{eq:qkernel-intro} can be rewritten as
\begin{equation}\label{eq:factor}
 K_{n,k}^{(\theta)}(q)
 =B_n\,v_{n-k}\,
 \frac{q^{k(k-1)}}{B_k}.
\end{equation}
Hence
\[
 \mathcal K^{(\theta)}(q)
 =D_1\,T(v)\,D_2,
\]
where $D_1$ and $D_2$ are positive diagonal matrices and
\[
 T(v)=(v_{n-k})_{n,k\ge0}
\]
is the lower-triangular Toeplitz matrix generated by $v$.  By Theorem~\ref{thm:PF}, $v\in\PF_\infty$, hence $T(v)$ is totally positive.  Positive diagonal multiplication preserves total positivity.
\end{proof}

More explicitly, for increasing index sets $n_1<\cdots<n_r$ and $k_1<\cdots<k_r$,
\begin{align}
 \det\bigl[K_{n_i,k_j}^{(\theta)}(q)\bigr]_{i,j=1}^r
 &=\left(\prod_{i=1}^rB_{n_i}\right)
 \left(\prod_{j=1}^r\frac{q^{k_j(k_j-1)}}{B_{k_j}}\right)
 \det\left[\frac1{B_{n_i-k_j}}\right]_{i,j=1}^r\notag\\
 &\ge0.\label{eq:minors}
\end{align}

\begin{corollary}
Both endpoint kernels~\eqref{eq:typeBq} and~\eqref{eq:typeAq} define totally positive lower-triangular matrices.
\end{corollary}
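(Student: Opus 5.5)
This is a direct specialization of Theorem~\ref{thm:TP}. The plan is to check that the two endpoint kernels are exactly $K^{(\theta)}_{n,k}(q)$ at admissible values of $\theta$, and then cite the theorem.

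First, Corollary~\ref{cor:typeB} identifies the type~$B$ kernel \eqref{eq:typeBq} with $K_{n,k}^{(0)}(q)$. Likewise, Corollary~\ref{cor:typeA} identifies the shifted type~$A$ kernel \eqref{eq:typeAq} with $K_{n,k}^{(1)}(q)$. Both identifications come from substituting into \eqref{eq:qkernel-intro}. For $\theta=0$ one uses $(q;q)_n/((q;q)_k(q;q)_{n-k})=\qbinom nk$. For $\theta=1$ one uses $(q^2;q)_m=(q;q)_{m+1}/(1-q)$, or Proposition~\ref{prop:gappedHoggatt} with $r=0,1$.

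Second, both values $\theta=0$ and $\theta=1$ lie in the range $\theta>-1$ required by Theorem~\ref{thm:TP}. For any fixed $0<q<1$, that theorem therefore shows the matrices $(K_{n,k}^{(0)}(q))$ and $(K_{n,k}^{(1)}(q))$ are totally positive. Concretely, each factors as $D_1T(v)D_2$, where $v_n=1/((q;q)_n(q^{\theta+1};q)_n)$ is PF$_\infty$ by Theorem~\ref{thm:PF} with $\nu=\theta$.

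No step poses a real obstacle. The only point to check is that the algebraic kernel at $\theta=0$ involves no limiting functional. Indeed, the factorization \eqref{eq:factor} uses only the coefficient formula, and $q^{\theta+1}=q\in(0,1)$, so $B_n>0$ and Theorem~\ref{thm:PF} applies with $\nu=0>-1$.
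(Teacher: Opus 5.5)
Your proposal is correct and matches the paper. The paper gives no separate proof: the corollary is the specialization $\theta=0,1$ of Theorem~\ref{thm:TP}, which is proved by exactly the factorization $D_1T(v)D_2$ with $v$ a PF$_\infty$ sequence by Theorem~\ref{thm:PF}, and your identification of the endpoint kernels with $K^{(0)}$ and $K^{(1)}$ is the only verification required.
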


\subsection{A fixed-\texorpdfstring{$q$}{q} analogue of the Wang--Yang triangle and square theorems}
\label{sec:WangYang}

Wang and Yang~\cite{WangYang} proved total positivity of the type $A$ and
$B$ Narayana triangles and strict total positivity of the corresponding
Narayana squares. Their triangle argument uses P\'olya-frequency sequences,
whereas their square argument uses Stieltjes moment sequences. Both
structures have counterparts in the present interpolation.
Throughout this subsection, total positivity means that all finite minors
are nonnegative; strict total positivity means that all are positive.

Remove the positive column factor by defining
\begin{equation}\label{eq:L-unweighted}
 L_{n,k}^{(\theta)}(q)
 =q^{-k(k-1)}K_{n,k}^{(\theta)}(q)
 =\frac{B_n}{B_kB_{n-k}},\qquad
 B_n=(q;q)_n(c;q)_n,\quad c=q^{\theta+1},
\end{equation}
for $0\le k\le n$, and set $L_{n,k}^{(\theta)}(q)=0$ for $k>n$.
The endpoint formulas are
\begin{equation}\label{eq:L-endpoints}
 L_{n,k}^{(0)}(q)=\qbinom{n}{k}^{\!2},\qquad
 L_{n,k}^{(1)}(q)=\frac{1}{[k+1]_q}
                 \qbinom{n+1}{k}\qbinom{n}{k}.
\end{equation}
They tend to the type $B$ and type $A$ Narayana numbers, respectively,
as $q\to1^-$.

\begin{theorem}[Total positivity of the $q$-Narayana triangles and squares]
\label{thm:WangYang-q}
For $0<q<1$ and $\theta>-1$, the triangle
$[L_{n,k}^{(\theta)}(q)]_{n,k\ge0}$ is totally positive, and the square
\[
 \mathcal S^{(\theta)}(q)
 =\bigl[L_{n+k,k}^{(\theta)}(q)\bigr]_{n,k\ge0}
 =\left[\frac{B_{n+k}}{B_nB_k}\right]_{n,k\ge0}
\]
is strictly totally positive. The same assertions hold for the triangle
and square formed from $K^{(\theta)}$.
\end{theorem}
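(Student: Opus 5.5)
The triangle assertion needs no new work. By \eqref{eq:L-unweighted}, $[L_{n,k}^{(\theta)}(q)]=\diag(B_n)\,T(v)\,\diag(1/B_k)$, where $T(v)$ is the lower-triangular Toeplitz matrix of the PF$_\infty$ sequence $v_n=1/B_n$ from Theorem~\ref{thm:PF}, with $c=q^{\theta+1}$ and $\nu=\theta>-1$. Positive diagonal scaling preserves total positivity, and Theorem~\ref{thm:TP} already gives the version for $K^{(\theta)}$. Passing between the $L$- and $K$-squares only multiplies column $k$ by the positive factor $q^{k(k-1)}$, since $K_{n+k,k}=q^{k(k-1)}L_{n+k,k}$. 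So strict total positivity transfers between the two squares, and it suffices to treat $\mathcal S^{(\theta)}(q)$.

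For the square, the plan is a discrete Hausdorff moment representation. The aim is a positive measure $\mu$ on $[0,1]$ with infinitely many support points, together with positive sequences $u_n$ and $w_k$, such that
\[
\frac{B_{n+k}}{B_nB_k}=u_nw_k\int x^{n+k}\,d\mu(x).
\]
Then $\mathcal S^{(\theta)}$ is a positive diagonal conjugate of a Hankel moment matrix. A Hankel moment matrix of a measure with infinite support has all minors positive: Cauchy--Binet (the Andr\'eief identity) writes each minor as an integral of a product of two generalized Vandermonde determinants over ordered points in $(0,1]$, and both determinants are positive there. This is the same argument the paper used for numerical Hankel minors of $A_n$.

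The natural source of $\mu$ is the $q$-binomial (Gauss/Heine) expansion. Write
\[
\frac{B_{n+k}}{B_nB_k}=\qbinom{n+k}{k}\frac{(c;q)_{n+k}}{(c;q)_n(c;q)_k}.
\]
The first factor has the standard Hankel-type representation $\qbinom{n+k}{k}=\frac{(q;q)_{n+k}}{(q;q)_n(q;q)_k}$. The key identity to try is the $q$-beta (little $q$-Jacobi) integral
\[
\frac{(q;q)_{n+k}}{(c;q)_{n+k}}\propto \sum_{j\ge0}\frac{(q^{j+1};q)_\infty}{(cq^{j};q)_\infty}\,q^{j(n+k+1)}\cdots .
\]
This is a Jackson integral, i.e.\ a discrete measure on $\{q^j\}$, which expresses a ratio of the form $(a;q)_{N}/(b;q)_{N}$ as $\int x^{N}\,d\mu$ with positive masses on the points $q^j$, whenever $0<a<b<1$ or an analogous ordering holds. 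The decisive step is to arrange the four factors into $\bigl[(c;q)_{n+k}(q;q)_{n+k}\bigr]$ times a function of $n$ times a function of $k$. Since $B_{n+k}=(q;q)_{n+k}(c;q)_{n+k}$ is a product of two increasing-type sequences, however, $B_{n+k}$ itself is not a Stieltjes moment sequence. I would therefore try the reciprocal route. Take $M_N=1/B_N=v_N$ and ask whether $(v_{n+k})$ is a moment matrix, using the Jackson $q$-Bessel--Heine theory of the previous section: the continued fraction \eqref{eq:Sfraction} shows that a related ratio of $V$ functions gives Stieltjes moments.

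The main obstacle is exactly this identification. We need a sequence $m_N$ with $B_{n+k}/(B_nB_k)=m_{n+k}\,u_nw_k$ and $(m_N)$ a Hausdorff moment sequence of a discrete measure. My first attempt will use the $q$-Pfaff--Saalsch\"utz/Heine expansion
\[
\frac{(q^{n+1},cq^{n};q)_k}{(q,c;q)_k}=\sum_j(\text{positive})\,q^{j(n)}\cdots,
\]
i.e.\ a $q$-Vandermonde-type sum $\sum_j \alpha_j\beta_j(n)\gamma_j(k)$ with $\beta_j(n)=x_j^n$ in the spirit of Wang--Yang. If this works, the masses $\alpha_j$ are explicit positive products, the nodes are $x_j=q^j$, and strict positivity follows from infinite support. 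If the discrete moment structure cannot be matched exactly, the fallback is to establish strict total positivity of the square directly from the triangle: the square is the product of the triangle with its transpose, taken as a Cauchy--Binet sum. Strict positivity would then come from exhibiting, for each minor, one nonvanishing term.
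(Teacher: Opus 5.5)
\textbf{Verdict: there is a genuine gap in the square half.} Your triangle argument is the paper's: a diagonal rescaling of the Toeplitz matrix of the $\PF_\infty$ sequence $1/B_n$. Your framework for the square is also the paper's: a representation $B_{n+k}/(B_nB_k)=u_nw_k\int x^{n+k}\,d\mu$, then Andr\'eief and positivity of generalized Vandermonde determinants. What is missing is the measure $\mu$, and you discard the correct candidate on a false premise.

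You claim that $B_N=(q;q)_N(c;q)_N$ is a product of increasing-type sequences and so is not a Stieltjes moment sequence. In fact both factors are decreasing, since every factor $1-aq^j$ lies in $(0,1)$. Moreover, each factor is a Hausdorff moment sequence. By Euler's expansion $1/(x;q)_\infty=\sum_i x^i/(q;q)_i$,
\[
 (a;q)_N=\frac{(a;q)_\infty}{(aq^N;q)_\infty}
 =(a;q)_\infty\sum_{i\ge0}\frac{a^i}{(q;q)_i}\,(q^i)^N .
\]
Taking $a=q$ and $a=c=q^{\theta+1}$ and multiplying gives
$B_N=B_\infty\sum_{i,j}\frac{q^ic^j}{(q;q)_i(q;q)_j}(q^{i+j})^N$.
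So $B_N=\int x^N\,d\rho_{q,c}$ for a discrete measure with positive mass at every point $q^m$. Now take $m_N=B_N$, $u_n=1/B_n$, $w_k=1/B_k$; your own Andr\'eief argument then proves strict total positivity. The heuristic was wrong in any case: the increasing sequence $N!(\theta+1)_N$ is also a Stieltjes moment sequence, and this is how the classical limit is treated.

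None of your alternative routes closes the gap.
\begin{itemize}
\item The reciprocal route cannot work. The matrix $[v_{n+k}]$ with $v_N=1/B_N$ is not a diagonal rescaling of $[B_{n+k}/(B_nB_k)]$. Also, $(v_N)$ is bounded and strictly increasing, since $v_1=1/((1-q)(1-c))>1=v_0$. A Stieltjes moment sequence cannot behave this way: bounded moments force support in $[0,1]$, hence nonincreasing moments.
\item The $q$-Pfaff--Saalsch\"utz expansion is never specified. An expansion $\sum_j\alpha_jx_j^n\gamma_j(k)$ gives strict total positivity only if $[\gamma_j(k)]$ is itself strictly totally positive. The natural choice $\gamma_j(k)=x_j^k$ is exactly the moment representation above.
\item The fallback relies on an identity that does not hold. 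Suppose $\mathcal S^{(\theta)}=LDL^{\mathsf T}$ with $D$ diagonal. The first row forces $d_0=1$, and the $(1,1)$ entry forces $d_1=B_2/B_1^2-1$. At the classical type $B$ point, $B_N=N!^2$, the $(2,1)$ entry would then be $1+4\cdot 3=13$, but the actual entry is $\binom31^2=9$. The Vandermonde factorization of Pascal's square does not extend to these kernels. Even a valid Cauchy--Binet factorization would need a separate argument to get strict positivity rather than nonnegativity.
\end{itemize}
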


\begin{proof}
The triangle statement follows from Theorem~\ref{thm:TP} by positive
column scaling. To treat the square, put
$B_\infty=(q;q)_\infty(c;q)_\infty>0$. The $q$-binomial theorem gives
\begin{align}
 B_n&=\frac{B_\infty}
 {(q^{n+1};q)_\infty(cq^n;q)_\infty}\notag\\
 &=B_\infty\sum_{i,j\ge0}
 \frac{q^ic^j}{(q;q)_i(q;q)_j}(q^{i+j})^n.
 \label{eq:B-Hausdorff}
\end{align}
All terms are nonnegative, and the series converges even at $n=0$.
Consequently $B_n$ is the moment sequence of the probability measure
\begin{equation}\label{eq:rho-Hausdorff}
 \rho_{q,c}=B_\infty\sum_{m\ge0}
 \left(\sum_{i=0}^{m}
 \frac{q^ic^{m-i}}{(q;q)_i(q;q)_{m-i}}\right)\delta_{q^m}.
\end{equation}
Every atom has positive mass. Thus $(B_n)$ is in fact a Hausdorff moment
sequence, with infinitely many positive support points.

For completeness, let $n_1<\cdots<n_r$ and $k_1<\cdots<k_r$ be
nonnegative integers. By Andr\'eief's determinant integration identity
(see de Bruijn~\cite[Eq.~(1.1)]{deBruijn1955} and
Forrester~\cite{Forrester2019} for historical background),
\begin{equation}\label{eq:B-general-minor}
 \det[B_{n_i+k_j}]_{i,j=1}^r
 =\frac1{r!}\int_{[0,1]^r}
 \det[x_\ell^{n_i}]_{i,\ell=1}^r
 \det[x_\ell^{k_j}]_{j,\ell=1}^r
 \prod_{\ell=1}^r d\rho_{q,c}(x_\ell).
\end{equation}
On $0<x_1<\cdots<x_r$, both generalized Vandermonde determinants
are strictly positive. Indeed, if $0\le n_1<\cdots<n_r$, then
\[
\det[x_j^{n_i}]_{i,j=1}^r
 =\prod_{1\le i<j\le r}(x_j-x_i)\,s_{\lambda}(x_1,\ldots,x_r),
\qquad
\lambda=(n_r-r+1,n_{r-1}-r+2,\ldots,n_1),
\]
and the Schur polynomial $s_{\lambda}$ has nonnegative coefficients
and is strictly positive at positive arguments. The same reasoning
applies to the $k_i$. The product of the determinants is invariant
under permutations of the variables, nonnegative everywhere,
and positive on any tuple of distinct positive support points.
Such tuples have positive product measure, so~\eqref{eq:B-general-minor}
is strictly positive. Finally,
\[
 \det\left[\frac{B_{n_i+k_j}}{B_{n_i}B_{k_j}}\right]_{i,j=1}^r
 =\frac{\det[B_{n_i+k_j}]_{i,j=1}^r}
 {\prod_iB_{n_i}\prod_jB_{k_j}}>0.
\]
Replacing $L_{n,k}$ by $K_{n,k}$, or $L_{n+k,k}$ by $K_{n+k,k}$,
only multiplies column $k$ by $q^{k(k-1)}>0$.
\end{proof}

\begin{corollary}[Classical triangle and square interpolation]
\label{cor:WangYang-classical}
For every $\theta>-1$, put
\[
 \ell_{n,k}^{(\theta)}=\binom nk
 \frac{(\theta+1)_n}{(\theta+1)_k(\theta+1)_{n-k}}
 \quad(0\le k\le n),
\]
and put $\ell_{n,k}^{(\theta)}=0$ for $k>n$.
The triangle $[\ell_{n,k}^{(\theta)}]_{n,k\ge0}$ is totally positive,
and the square $[\ell_{n+k,k}^{(\theta)}]_{n,k\ge0}$ is strictly
 totally positive. At $\theta=0,1$, these are the type $B$ and type $A$
results of Wang and Yang, with the shifted type $A$ indexing used here.
\end{corollary}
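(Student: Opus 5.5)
The plan is to obtain the classical statement from Theorem~\ref{thm:WangYang-q} by letting $q\to1^-$, since total nonnegativity is a closed condition. Fix $\theta>-1$ and set $c=q^{\theta+1}$. For $0\le k\le n$,
\[
 L_{n,k}^{(\theta)}(q)=\qbinom nk\frac{(q^{\theta+1};q)_n}{(q^{\theta+1};q)_k(q^{\theta+1};q)_{n-k}}.
\]
As $q\to1^-$ we have $\qbinom nk\to\binom nk$. Also $(q^{\theta+1};q)_m/(1-q)^m=\prod_{j=0}^{m-1}[\theta+1+j]_q\to(\theta+1)_m$, where $[x]_q=(1-q^x)/(1-q)$. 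The powers of $(1-q)$ cancel between numerator and denominator, because the exponents satisfy $n=k+(n-k)$. Hence $L_{n,k}^{(\theta)}(q)\to\ell_{n,k}^{(\theta)}$ entrywise, and the zero pattern for $k>n$ is the same in both arrays.

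Each finite minor is a polynomial in finitely many entries, so it converges to the corresponding minor of the classical array. The minors of the $q$-triangle are nonnegative, so the limit triangle is totally positive. For the square, the same limiting argument gives only nonnegativity of the minors of $[\ell_{n+k,k}^{(\theta)}]$.

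The main obstacle is strict positivity, because a limit of positive numbers can be zero. I would not pass to the limit here. Instead I would repeat the moment argument of the square proof directly at $q=1$. Write
\[
 \ell_{n+k,k}^{(\theta)}=\frac{b_{n+k}}{b_nb_k},\qquad b_n=n!\,(\theta+1)_n .
\]
Then I need a positive measure with infinitely many support points whose moments are $b_n$. Take
\[
 b_n=\int_0^\infty\!\!\int_0^\infty (xy)^n e^{-x}\frac{y^{\theta}e^{-y}}{\Gamma(\theta+1)}\,dx\,dy .
\]
This integral converges for $\theta>-1$, and it exhibits $b_n$ as the Stieltjes moment sequence of the law of $xy$. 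That law has a positive density on $(0,\infty)$.

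With this measure, Andréief's identity~\eqref{eq:B-general-minor} applies with $[0,1]$ replaced by $(0,\infty)$. The positivity of the generalized Vandermonde determinants via Schur polynomials applies verbatim, and it gives $\det[b_{n_i+k_j}]>0$. Dividing by the positive row and column factors $b_{n_i}$ and $b_{k_j}$ then gives strict total positivity of the square.

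Finally, at $\theta=0$ we get $\ell^{(0)}_{n,k}=\binom nk^2$, the type~$B$ Narayana numbers. At $\theta=1$ we get $\ell^{(1)}_{n,k}=\frac{1}{k+1}\binom{n+1}{k}\binom nk$, the type~$A$ Narayana numbers $N(n+1,k+1)$ in the shifted indexing. This identifies the two endpoints with the Wang--Yang results.
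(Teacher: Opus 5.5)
Your proposal is correct and follows essentially the same route as the paper: the triangle comes from the entrywise limit $q\uparrow1$ of Theorem~\ref{thm:WangYang-q}. Strict positivity of the square comes from writing $b_n=n!(\theta+1)_n$ as the moments of a product of independent gamma variables with shapes $1$ and $\theta+1$, applying the Andr\'eief/generalized-Vandermonde argument of \eqref{eq:B-general-minor} on $(0,\infty)$, and then dividing by $b_{n_i}b_{k_j}$.
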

\begin{proof}
The entrywise limit $L_{n,k}^{(\theta)}(q)\to\ell_{n,k}^{(\theta)}$
as $q\uparrow1$ and Theorem~\ref{thm:WangYang-q} prove triangle total
positivity. To prove strict square positivity, set
$d_n=n!(\theta+1)_n$. These are the moments of the product of independent
gamma random variables with shapes $1$ and $\theta+1$ and scale $1$.
The product distribution has positive density on $(0,\infty)$.
The generalized Vandermonde argument in~\eqref{eq:B-general-minor}
therefore proves strict positivity of every minor of $[d_{n+k}]$.
Positive row and column division by $d_n$ and $d_k$ gives the result.
\end{proof}

\begin{remark}[Two complementary positivity mechanisms]
Theorem~\ref{thm:PF} and~\eqref{eq:B-Hausdorff} give the pairing
\[
 (B_n^{-1})_{n\ge0}\in\PF_\infty,
 \qquad (B_n)_{n\ge0}\text{ is a Hausdorff moment sequence}.
\]
The first controls the triangular Toeplitz factor; the second controls
the Hankel factor of the square. The measure $\rho_{q,c}$ is distinct from
a Stieltjes representing measure for the coefficients $M_n(c,q)$ in Section~\ref{sec:Heine}:
it represents $B_n$, not the coefficients $M_n(c,q)$ of the contiguous
ratio. At the two endpoints, Theorem~\ref{thm:WangYang-q} gives
fixed-$q$ counterparts of both Wang--Yang matrix results, and it extends
them through the continuous interpolation $\theta>-1$.
\end{remark}

\begin{remark}[Numerical versus coefficientwise positivity]
All assertions above concern each fixed real $q\in(0,1)$.
They do not establish coefficientwise nonnegativity of the minors of
the polynomial endpoint matrices in~\eqref{eq:L-endpoints}.
In particular, the terminology ``fixed-$q$ analogue'' should not be
confused with coefficientwise $q$-total positivity. Strict positivity
of the classical squares also does not follow merely by taking a limit
of positive minors; the independent moment argument in
Corollary~\ref{cor:WangYang-classical} proves it for every $\theta>-1$.
\end{remark}

\subsection{Coefficientwise positivity: an endpoint conjecture}
\label{sec:coefficientwise}

For polynomial matrices, coefficientwise total nonnegativity means that
 every finite minor belongs to $\mathbb N[q]$ (including the zero
polynomial). This is stronger than the numerical positivity proved above.
Set
\[
 A_{n,k}(q)=\frac{1}{[k+1]_q}\qbinom{n+1}{k}\qbinom{n}{k},
 \qquad B_{n,k}(q)=\qbinom{n}{k}^{\!2},
\]
with both entries zero for $k>n$. These are the two polynomial endpoints
$L^{(1)}$ and $L^{(0)}$ of~\eqref{eq:L-unweighted}.

\begin{conjecture}[Coefficientwise positivity of the endpoint matrices]
\label{conj:coefficientwise-endpoints}
Each of the four matrices
\[
 [A_{n,k}(q)]_{n,k\ge0},\quad [B_{n,k}(q)]_{n,k\ge0},\quad
 [A_{n+k,k}(q)]_{n,k\ge0},\quad [B_{n+k,k}(q)]_{n,k\ge0}
\]
is coefficientwise totally nonnegative.
\end{conjecture}

\paragraph{Exact finite verification.}
All minors of orders $2$, $3$ and $4$ in the leading $7\times7$ block
(indices $0,\ldots,6$) of each of these four matrices were checked by
exact integer polynomial arithmetic. Gaussian binomials were generated
by their polynomial recurrence; the type $A$ entries were formed by
exact division by $[k+1]_q$, with zero remainder checked. Determinants
were expanded by the permutation formula and every coefficient was
inspected. All coefficients were nonnegative in the
\[
 4\left(\binom72^2+\binom73^2+\binom74^2\right)=11564
\]
minors tested. For a square block, the entry at $(n,k)$ uses $n+k$
in the original triangle, so this computation includes triangle entries
with upper index as large as $12$. This is finite evidence, not a proof
for arbitrary indices or minor orders.

For example, the triangle minors with rows $1,2,3$ and columns $0,1,2$
are
\begin{align*}
 \det[A_{n,k}(q)]_{\substack{n=1,2,3\\k=0,1,2}}
 &=q^3(2+2q+2q^2+q^3),\\
 \det[B_{n,k}(q)]_{\substack{n=1,2,3\\k=0,1,2}}
 &=q^2(2+6q+6q^2+4q^3+q^4).
\end{align*}
The conjecture is equivalent for our original weighted endpoints:
column $k$ is multiplied by $q^{k(k-1)}$, so each minor is multiplied
by a monomial, which does not change coefficient signs.

\paragraph{Scope and next target.}
A polynomial assertion cannot be imposed on the whole continuous
interpolation without changing its normalization. Even the integral
value $\theta=2$ gives
\[
 L_{2,1}^{(2)}(q)=(1+q)\frac{1-q^4}{1-q^3},
\]
which is not a polynomial. Conjecture~\ref{conj:coefficientwise-endpoints}
is deliberately restricted to the two polynomial endpoints. Neither the
Jackson $q$-Bessel zero representation nor the positive discrete measure
in~\eqref{eq:rho-Hausdorff} establishes coefficient signs in these
polynomial minors. Section~\ref{sec:TN2} now proves the conjecture
through order two by a skew-rectangle comparison and a path argument.
Higher orders still require a further argument, such as a network with
polynomial nonnegative weights or a suitable Schur-positive determinant identity.

\subsection{Coefficientwise total nonnegativity through order two}
\label{sec:TN2}

The rectangular-Schur comparisons of Chern and
Shi~\cite[Lemmas 3.2--3.3]{ChernShiHoggatt} suggest the following
skew-rectangle extension. It addresses the different alphabet bounds
that arise in mixed-index minors.

\begin{lemma}[A two-row Littlewood--Richardson criterion]
\label{lem:LR-two-row}
Let $r\ge1$ and $p\ge t\ge0$. The coefficient
$c_{(r,r),(p,t)}^\lambda$ is either zero or one. It equals one precisely
when $\lambda$ has at most four parts and, after padding by zeros,
\begin{gather}
 |\lambda|=2r+p+t,\qquad \lambda_2\ge r\ge\lambda_3,
 \label{eq:LR-size}\\
 \lambda_1+\lambda_3\ge r+p,\qquad
 \lambda_1+\lambda_4\le r+p,\qquad
 \lambda_2+\lambda_3\le r+p.
 \label{eq:LR-ineq}
\end{gather}
Zero parts in $(p,t)$ are omitted.
\end{lemma}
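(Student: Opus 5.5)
We prove Lemma~\ref{lem:LR-two-row} with the Littlewood--Richardson rule, taking the rectangle $(r,r)$ as the fixed shape and filling the skew shape $\lambda/(r,r)$ with a Littlewood--Richardson tableau of content $(p,t)$. Such a tableau uses $p$ ones and $t$ twos. The skew shape is semistandard, entries lie in $\{1,2\}$, and the reverse reading word must be a lattice word. The plan has four steps.

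\textbf{First: bound the shape.} Columns of a semistandard filling with entries in $\{1,2\}$ have length at most two. The first two rows of $\lambda/(r,r)$ begin at column $r+1$, and rows $3,4,\dots$ begin at column $1$. Hence $\lambda$ has at most four parts, $\lambda_3\le r$ (since $\lambda_3\le\lambda_2$ is automatic but we need $\lambda_2\ge r$ for containment), and $\lambda_5=0$.

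\textbf{Second: show the filling is forced.} Apply the lattice condition together with column strictness.
\begin{itemize}
\item Row~1 of the skew shape consists only of ones, since the first letter read must be $1$ and rows are weakly increasing.
\item Any cell of row~2 lying under a cell of row~1 must be $2$.
\item Cells of row~2 beyond column $\lambda_1$ are not under row~1. This portion is empty, because $\lambda_2\le\lambda_1$.
\item Row~3 occupies columns $1..\lambda_3$, which lie under the rectangle for $\lambda_3\le r$. Its cells can be $1$ or $2$, with twos to the right.
\item Row~4 lies under row~3, so it is all $2$s, each sitting under a $1$ of row~3.
\end{itemize}
Write $a$ for the number of ones in row~2 and $b$ for the number of ones in row~3. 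Column strictness fixes the twos, and the content then determines $a$ and $b$:
\[
(\lambda_1-r)+a+b=p,\qquad (\lambda_2-r-a)+(\lambda_3-b)+\lambda_4=t.
\]
These are dependent through the size condition $|\lambda|=2r+p+t$. So the filling is determined once $a$ is chosen, and I must argue that $a$ is forced. Row~2's ones occupy columns $r+1..r+a$, and these columns must lie beyond $\lambda_1$ to avoid sitting under row~1. That is impossible since $\lambda_1\ge\lambda_2$, so $a=0$. Row~2 is then all twos, and $b=p-\lambda_1+r$. This uniqueness gives the zero/one dichotomy.

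\textbf{Third: translate feasibility into inequalities.} The choice $b=p-\lambda_1+r$ is valid exactly when the following hold:
\begin{itemize}
\item $b\ge0$, which is $\lambda_1\le r+p$;
\item $b\le\lambda_3$, which is $\lambda_1+\lambda_3\ge r+p$;
\item row~4 fits under the ones of row~3, i.e.\ $\lambda_4\le b$, which is $\lambda_1+\lambda_4\le r+p$;
\item the lattice condition on the reverse reading word.
\end{itemize}
The reverse reading word reads row~1 right to left, giving $p$ ones in total only after row~3 is included. It then reads row~2 (all twos), then row~3, then row~4. The critical prefix is after rows~1 and~2, where the count of twos, $\lambda_2-r$, must be at most the count of ones, $\lambda_1-r$; this is automatic. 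After the twos of row~3 are read, the cumulative twos $\lambda_2-r+\lambda_3-b$ must not exceed the ones $\lambda_1-r$. This gives $\lambda_2+\lambda_3\le\lambda_1+b=r+p$. Subsequent prefixes add ones and then twos, ending with balance $p\ge t$. The condition $\lambda_1\le r+p$ follows from $\lambda_1+\lambda_4\le r+p$.

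\textbf{Fourth: handle degenerate cases.} When $t=0$ or $p=0$, zero parts are omitted. The same count applies with $t=0$, and then the inequalities collapse consistently.

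\textbf{Main obstacle.} I expect the main difficulty to be the careful bookkeeping of the lattice-word prefixes at the row~3 to row~4 transition, and confirming that no further inequality arises. In particular I need to check the intermediate prefix inside row~3 just after its twos, and the one after row~4. I would first verify these on small examples, such as $r=1$ with $(p,t)=(2,1)$, against known Pieri-type products.
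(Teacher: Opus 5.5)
Your proposal is correct and follows the paper's own argument: you force the unique filling (row~1 all ones, rows~2 and~4 all twos, row~3 a block of ones followed by twos), fix the single free count $b$ by the content, and read off the inequalities from $\lambda_4\le b\le\lambda_3$ and the lattice prefix ending just after the third-row twos, exactly as the paper does with $j=\lambda_3-b$. The only blemish is your parenthetical justification of $\lambda_3\le r$, which should simply say that $\lambda_3>r$ would place three skew cells in column $r+1$, impossible with entries in $\{1,2\}$.
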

\begin{proof}
An LR tableau of shape $\lambda/(r,r)$ and content $(p,t)$ uses only
$1$ and $2$. It has at most four rows; otherwise its first column below
the inner rectangle would have at least three cells. The first skew row
consists of $1$'s by the lattice-word condition. Every cell of the second
skew row lies below a first-row cell, so it is $2$. The third row must
have length at most $r$, since a cell beyond column $r$ would lie below
a $2$ in the second row. All fourth-row cells are $2$, and the cells
above them in the third row must be $1$.

Write $j$ for the number of $2$'s in the third row. Its entries are
$\lambda_3-j$ ones followed by $j$ twos. The content forces
\[
 j=\lambda_1+\lambda_3-r-p.
\]
The constraints $0\le j\le\lambda_3-\lambda_4$ give the first two
inequalities in~\eqref{eq:LR-ineq}. In the reverse row word, the critical
prefix ends just after the third-row twos, giving
\[
 \lambda_1-r\ge(\lambda_2-r)+j,
\]
which is the last inequality. After the third-row ones, the final
fourth-row twos preserve the lattice condition because $p\ge t$.
Conversely, these inequalities construct exactly this tableau and
ensure every row, column and lattice condition. Thus there is one
such tableau when the conditions hold and none otherwise.
\end{proof}

\begin{lemma}[Skew-rectangle monotonicity]
\label{lem:skew-rectangle}
For $0\le a<b$ and any partition $\beta$,
\begin{align}
 s_{(a,a)}s_{(b,b)/\beta}-s_{(b,b)}s_{(a,a)/\beta}
 &\ge_s0,\label{eq:skew-rectangle-row}\\
 s_{(2^a)}s_{(2^b)/\beta}-s_{(2^b)}s_{(2^a)/\beta}
 &\ge_s0.\label{eq:skew-rectangle-column}
\end{align}
An empty rectangle indexes $1$; a skew function is zero if its inner
shape is not contained in its outer shape.
\end{lemma}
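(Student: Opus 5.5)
The plan is to expand both products by the Littlewood--Richardson rule and compare them term by term, using Lemma~\ref{lem:LR-two-row}. The column inequality \eqref{eq:skew-rectangle-column} follows from the row inequality \eqref{eq:skew-rectangle-row} by applying the involution $\omega$. Indeed, $\omega s_{\lambda/\mu}=s_{\lambda'/\mu'}$ and $\omega$ preserves Schur positivity. It also sends $(a,a)$ to $(2^a)$ and $\beta$ to $\beta'$. So only \eqref{eq:skew-rectangle-row} needs proof.

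First I dispose of the trivial cases. If $\beta\not\subseteq(b,b)$, both skew functions vanish, because $(a,a)\subseteq(b,b)$. If $\beta\subseteq(b,b)$ but $\beta\not\subseteq(a,a)$, the subtracted term is zero and the first term is Schur positive. So assume $\beta=(\beta_1,\beta_2)\subseteq(a,a)$.

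Next I rewrite the skew functions of a two-row rectangle. Rotating by $180^\circ$ gives
\[
 s_{(r,r)/\beta}=s_{(r-\beta_2,\,r-\beta_1)}.
\]
Put $p=b-\beta_2$, $t=b-\beta_1$, and $u=a-\beta_2$, $v=a-\beta_1$. Then $(p,t)$ and $(u,v)$ are partitions with the same difference $p-t=u-v=\beta_1-\beta_2$, and
\[
 (p,t)=(u,v)+(b-a)(1,1).
\]
The claim becomes
\[
 s_{(a,a)}s_{(p,t)}-s_{(b,b)}s_{(u,v)}\ge_s0.
\]
Both products have degree $2a+2b-\beta_1-\beta_2$. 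By Lemma~\ref{lem:LR-two-row}, each coefficient is $0$ or $1$. So it suffices to show an inclusion of index sets: every $\lambda$ satisfying the conditions \eqref{eq:LR-size}--\eqref{eq:LR-ineq} for $(r;p,t)=(b;u,v)$ also satisfies them for $(a;p,t)$. Note that Lemma~\ref{lem:LR-two-row} writes the rectangle first. Since Littlewood--Richardson coefficients are symmetric, $c^\lambda_{(r,r),(p,t)}$ is the coefficient in either product order.

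Now I check the conditions one at a time, with $r=b$ and $(u,v)$ as hypotheses and $r=a$ and $(p,t)$ as goals. Write $d=b-a>0$.
\begin{itemize}
\item Size: $2b+u+v=2a+p+t$ holds automatically.
\item Middle inequality: from $\lambda_2\ge b\ge\lambda_3$ we get $\lambda_2\ge a$. For the other half, $\lambda_3\le b$ is weaker than $\lambda_3\le a$, so this is a genuine issue.
\item The three inequalities in \eqref{eq:LR-ineq}: these have right side $b+u$ in the hypothesis and $a+p=a+u+d=b+u$ in the goal. They are identical.
\end{itemize}

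The main obstacle is therefore $\lambda_3\le a$, given only $\lambda_3\le b$. I would get it from the remaining constraints. Using $|\lambda|=2b+u+v$ together with $\lambda_1+\lambda_4\le b+u$ gives
\[
 \lambda_2+\lambda_3\ge b+v.
\]
Combining this with $\lambda_2+\lambda_3\le b+u$ alone does not bound $\lambda_3$. So a more refined argument is needed. I would try one of two routes. The first is to use the lattice structure of the tableau in Lemma~\ref{lem:LR-two-row}: the third row has length at most the shorter rectangle side. The second is a Jacobi--Trudi or Plücker-type identity that realizes the difference as a single Schur-positive skew product.

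If the direct LR inclusion fails because of the constraint $\lambda_3\le r$, my fallback is different. I would write
\[
 s_{(a,a)}s_{(b,b)/\beta}-s_{(b,b)}s_{(a,a)/\beta}
\]
via the Lam--Postnikov--Pylyavskyy cell-transfer theorem. That theorem asserts Schur positivity of $s_{\lambda\vee\mu/\nu\vee\rho}s_{\lambda\wedge\mu/\nu\wedge\rho}-s_{\lambda/\nu}s_{\mu/\rho}$. I would take
\[
 \lambda=(b,b),\quad \nu=\emptyset,\quad \mu=(a,a),\quad \rho=\beta.
\]
Then $\lambda\vee\mu=(b,b)$ and $\lambda\wedge\mu=(a,a)$. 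The obstruction is that $\nu\vee\rho=\beta$ and $\nu\wedge\rho=\emptyset$, which gives the correct pairing $s_{(b,b)/\beta}\,s_{(a,a)}-s_{(b,b)}\,s_{(a,a)/\beta}$. This is exactly \eqref{eq:skew-rectangle-row}. So the fallback argument closes the proof directly, and I would present it as the main proof, with the LR computation kept as a consistency check.
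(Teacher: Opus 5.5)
Your proof is correct, but your main argument differs from the paper's. The paper uses the Littlewood--Richardson route that you abandoned. Your ``main obstacle'' is not an obstacle. Your hypothesis already includes $\lambda_2\ge b$, and subtracting this from $\lambda_2+\lambda_3\le b+u$ gives
\[
 \lambda_3\le u=a-\beta_2\le a .
\]
You instead combined the upper bound on $\lambda_2+\lambda_3$ with the lower bound $\lambda_2+\lambda_3\ge b+v$, which does not help. With this one line, every $\lambda$ that satisfies the conditions of Lemma~\ref{lem:LR-two-row} for $(b;u,v)$ also satisfies them for $(a;p,t)$. Both coefficients are $0$ or $1$, so \eqref{eq:skew-rectangle-row} follows. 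That is exactly the paper's proof. As you left it, your Littlewood--Richardson part is not yet a consistency check of anything; it needs this step.

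Your cell-transfer specialization is valid. Take $\lambda=(b,b)$, $\nu=\varnothing$, $\mu=(a,a)$, $\rho=\beta$, with $\beta\subseteq(a,a)$ after your trivial cases. Then the Lam--Postnikov--Pylyavskyy theorem gives \eqref{eq:skew-rectangle-row} directly. Applying $\omega$ and replacing $\beta$ by $\beta'$ then gives \eqref{eq:skew-rectangle-column}, as in the paper.

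The two routes trade off as follows. The paper's argument is elementary and self-contained. It needs only the multiplicity-free two-row count just proved in Lemma~\ref{lem:LR-two-row}, and that explicit description is reused in the near-rectangle certificate that follows. Your route imports a deep theorem, proved via Temperley--Lieb immanants. In exchange it needs neither Lemma~\ref{lem:LR-two-row} nor rectangular shapes. The same specialization with $\lambda=\tau$ and $\mu=\sigma$ gives
\[
 s_\sigma s_{\tau/\beta}-s_\tau s_{\sigma/\beta}\ge_s0
 \qquad\text{for all partitions } \sigma\subseteq\tau .
\]
Consequently Proposition~\ref{prop:nested-TN2} would hold for any nested chain of shapes, not only for $(j,j)$ or $(2^j)$.
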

\begin{proof}
If $\beta\not\subseteq(a,a)$, the negative term
in~\eqref{eq:skew-rectangle-row} is zero and the positive term is a
product of Schur-positive functions. The case $a=0$, $\beta=\varnothing$
is zero. Otherwise write $\beta=(u,v)$ with $a\ge u\ge v\ge0$.
The complementary-shape identity for a rectangle gives
\[
 s_{(b,b)/\beta}=s_{(b-v,b-u)},\qquad
 s_{(a,a)/\beta}=s_{(a-v,a-u)}.
\]
These identities also follow immediately from the two-row skew
Jacobi--Trudi formula.

Apply Lemma~\ref{lem:LR-two-row} to
$s_{(a,a)}s_{(b-v,b-u)}$ and $s_{(b,b)}s_{(a-v,a-u)}$.
Both products have total size $2(a+b)-u-v$ and the same value
$r+p=a+b-v$ in~\eqref{eq:LR-ineq}. If $s_\lambda$ occurs in the second
product, then $\lambda_2\ge b$ and
$\lambda_2+\lambda_3\le a+b-v$, hence
$\lambda_3\le a-v\le a$. Therefore
$\lambda_2\ge a\ge\lambda_3$, and all conditions for occurrence in
the first product also hold. Since both coefficients are zero or one,
their difference is nonnegative. This proves~\eqref{eq:skew-rectangle-row}.
Apply $\omega$ and replace $\beta$ by its conjugate to
obtain~\eqref{eq:skew-rectangle-column}.
\end{proof}

\begin{proposition}[Two nested alphabets]
\label{prop:nested-TN2}
Let $R_j=(j,j)$ for all $j$, or let $R_j=(2^j)$ for all $j$.
For disjoint alphabets $X,Y$ and $0\le a<b$,
\[
 s_{R_a}(X)s_{R_b}(X+Y)-s_{R_b}(X)s_{R_a}(X+Y)
\]
is a nonnegative integer linear combination of products
$s_\lambda(X)s_\mu(Y)$.
\end{proposition}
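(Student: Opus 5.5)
We expand both Schur functions of the union alphabet with the coproduct rule
\[
 s_{R}(X+Y)=\sum_{\beta\subseteq R}s_{R/\beta}(X)\,s_\beta(Y),
\]
which holds for every partition $R$ and disjoint alphabets $X,Y$. Here $\beta$ runs over partitions and we write the formula in the form obtained by splitting a semistandard tableau of shape $R$ into its $Y$-part and $X$-part. Taking the $Y$-letters to be the smaller ones, the $Y$-entries fill a subshape $\beta$ and the $X$-entries fill $R/\beta$. Both orderings of letters are legitimate, since $s_{R}(X+Y)$ is symmetric in the combined alphabet.

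Substituting, the difference in the proposition becomes
\[
 \sum_{\beta}\Bigl(s_{R_a}(X)\,s_{R_b/\beta}(X)-s_{R_b}(X)\,s_{R_a/\beta}(X)\Bigr)s_\beta(Y).
\]
The sum runs over all partitions $\beta$, with the convention of Lemma~\ref{lem:skew-rectangle} that a skew function vanishes when its inner shape is not contained in its outer shape. Each term is therefore finite and well defined. The sets of relevant $\beta$ differ for $a$ and $b$, namely $\beta\subseteq R_b$ versus $\beta\subseteq R_a$, and the vanishing convention absorbs this difference.

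For each fixed $\beta$, the bracket is exactly the left-hand side of \eqref{eq:skew-rectangle-row} in the case $R_j=(j,j)$, and of \eqref{eq:skew-rectangle-column} in the case $R_j=(2^j)$. Lemma~\ref{lem:skew-rectangle} shows that each bracket is Schur positive in $X$. That is, it equals $\sum_\lambda c_{\lambda,\beta}\,s_\lambda(X)$ with $c_{\lambda,\beta}\in\mathbb N$. Indeed, its coefficients are differences of Littlewood--Richardson numbers, hence integers, and they are nonnegative by the lemma. Multiplying by $s_\beta(Y)$ and summing over $\beta$ gives the desired nonnegative integer combination of products $s_\lambda(X)s_\beta(Y)$.

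The only point needing care is the coproduct identity with the correct placement of the skew shape. We must make sure the inner shape $\beta$ attaches to the $Y$-factor while the $X$-factor is $s_{R/\beta}(X)$, so that the brackets match Lemma~\ref{lem:skew-rectangle} with a common $\beta$. This is the standard branching $s_\lambda(X+Y)=\sum_\mu s_\mu(Y)s_{\lambda/\mu}(X)$, so no real obstacle remains.

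The case $a=0$ also needs a check. There $R_0=\varnothing$ indexes $1$, and $s_{\varnothing/\beta}$ is $\delta_{\beta\varnothing}$. The bracket for $\beta=\varnothing$ is zero, and for $\beta\neq\varnothing$ it is $s_{R_b/\beta}(X)$, which is Schur positive. Both cases are consistent with the lemma.
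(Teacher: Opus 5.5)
Your proposal is correct and is essentially the paper's proof. You expand $s_R(X+Y)=\sum_\beta s_{R/\beta}(X)s_\beta(Y)$ by the branching rule and apply Lemma~\ref{lem:skew-rectangle} to each bracket with $\beta$ fixed. Your extra checks of the vanishing convention and of the case $a=0$ are consistent with the lemma as stated, which already covers them.
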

\begin{proof}
The branching rule expresses this difference as
\[
 \sum_\beta
 \bigl(s_{R_a}(X)s_{R_b/\beta}(X)
       -s_{R_b}(X)s_{R_a/\beta}(X)\bigr)s_\beta(Y).
\]
Every bracket is Schur-positive by Lemma~\ref{lem:skew-rectangle}.
\end{proof}

\begin{theorem}[Coefficientwise $\mathrm{TN}_2$ of all four endpoints]
\label{thm:endpoint-TN2}
Every minor of order at most two of each matrix in
Conjecture~\ref{conj:coefficientwise-endpoints} belongs to $\mathbb N[q]$.
The assertion also holds after the column weighting $q^{k(k-1)}$.
\end{theorem}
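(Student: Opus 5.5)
Order-one minors are the entries themselves. The entries $B_{n,k}=\qbinom nk^2$ lie in $\mathbb N[q]$ trivially. The entries $A_{n,k}$ are $q$-Narayana numbers, which lie in $\mathbb N[q]$ as principal specializations of $s_{(2^k)}$ or $s_{(k,k)}$. The column weighting multiplies each minor by a monomial, so it can be ignored throughout.

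For order two, I will write every endpoint entry as a principal specialization of a product of rectangular Schur functions. Concretely, $\qbinom{n}{k}=q^{-\ast}s_{(k)}(1,q,\ldots,q^{n-k})$. Equivalently, $\qbinom nk$ is $s_{(k^{\,\cdot})}$ of a rectangle in an alphabet of size depending on $n$. Similarly, $A_{n,k}$ is, up to a power of $q$, $s_{(k,k)}(1,q,\ldots,q^{n-k})$ by the hook-content formula, or equivalently $s_{(2^{\,n-k})}(1,\ldots,q^{k})$. A $2\times2$ minor with rows $n_1<n_2$ and columns $k_1<k_2$ then has the form
\[
 s_{R_{a}}(X)\,s_{R_{b}}(X+Y)-s_{R_{b}}(X)\,s_{R_{a}}(X+Y)
\]
after a common monomial factor. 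Here $X=\{1,\ldots,q^{m}\}$ and $X+Y$ is a longer geometric alphabet $\{1,\ldots,q^{m'}\}$. The key bookkeeping step is to choose which index plays the role of the rectangle size and which plays the role of the alphabet length, and I will do this separately for the triangle and the square and for each endpoint. For type $A$ I will use the row rectangles $R_j=(j,j)$. For type $B$ I will use a square of a one-row or one-column rectangle, after expanding $\qbinom nk^2=s_{(k)}(X)^2$. This reduces to the two-row case via $s_{(k)}^2=\sum_i s_{(2k-i,i)}$, but a cleaner route is the column form $(2^{k})$ in a doubled alphabet. The aim is that in every case the minor reduces to a difference of the type in Proposition~\ref{prop:nested-TN2}, possibly multiplied by a Schur-positive factor that is common to the two terms. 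Specializing a Schur-positive combination of $s_\lambda(X)s_\mu(Y)$ at $X,Y$ geometric with positive powers gives $\mathbb N[q]$, by the monomial positivity of Schur functions.

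I expect the main obstacle to be index configurations in which both the rectangle size and the alphabet length change between the two terms in a way that does not fit a single nested pair. The case $n_1<n_2$, $k_1<k_2$ with $n_2-k_2<n_1-k_1$ is an example: there the alphabet of the larger column is shorter. For these cases I will instead use a path (Lindström–Gessel–Viennot) argument. The weight $q^{\text{area}}$ on lattice paths gives $\qbinom nk$, and a pair of noncrossing path systems gives the type $B$ entries as squares. A $2\times2$ minor then equals a signed sum that the standard tail-swapping involution cancels to a sum over noncrossing pairs with $q$-monomial weights, hence lies in $\mathbb N[q]$. For type $A$, the Narayana entries count pairs of noncrossing paths, and the $2\times2$ minor becomes a $4$-path configuration. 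Here I would first try to apply Proposition~\ref{prop:nested-TN2} with the roles of $X$ and $Y$ arranged so that the shorter alphabet is $X$. I would resort to the path model only where this arrangement fails.

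Finally, for the squares $[A_{n+k,k}]$ and $[B_{n+k,k}]$, I will translate each entry to the triangle indices. For a square, the alphabet length is $n$ and the rectangle size is $k$, so a square minor has the fixed-$n$, varying-$k$ shape. This is exactly the monotonicity of Lemma~\ref{lem:skew-rectangle} in the rectangle size, so the square case should follow directly from Proposition~\ref{prop:nested-TN2}.
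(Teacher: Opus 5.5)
Your plan has a genuine gap in the type~$A$ triangle and in both type~$B$ matrices, and the fallbacks you propose for them are not arguments.

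\textbf{Type $A$ triangle.} Proposition~\ref{prop:nested-TN2} applies only when, in a $2\times2$ minor, the alphabet depends on the row index alone and the rectangle on the column index alone. With your choice $R_j=(j,j)$, write $Q_N=(1,q,\ldots,q^{N-1})$. Then the triangle entries are $A_{n,k}=q^{-k}s_{(k,k)}(Q_{n-k+2})$. Your length $n-k+1$ is off by one, and the companion formula should be $s_{(2^{n-k})}(Q_n)$, not $s_{(2^{n-k})}(1,\ldots,q^k)$. The two entries of a row then use the different alphabets $Q_{n-k_1+2}$ and $Q_{n-k_2+2}$. So no triangle minor has the shape $s_{R_a}(X)s_{R_b}(X+Y)-s_{R_b}(X)s_{R_a}(X+Y)$, whatever the sign of $(n_2-k_2)-(n_1-k_1)$, and exchanging the roles of $X$ and $Y$ does not help. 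The ``obstacle'' you single out is an artifact of this encoding.

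The encoding you mention only for order one, $s_{(2^k)}(Q_n)=q^{k(k-1)}A_{n,k}$, is the right one. For rows $n<m$ and columns $k<l$ it puts every triangle minor into nested form with $X=Q_n$ and $Y=(q^n,\ldots,q^{m-1})$, with no exceptional configurations. This is the paper's argument; for the square it uses $s_{(k,k)}(Q_{n+2})=q^kA_{n+k,k}$, which your last paragraph essentially has. Your fallback, that the type~$A$ minor ``becomes a $4$-path configuration'', does not work as stated. A difference of products of $2\times2$ Lindstr\"om--Gessel--Viennot determinants is not itself a $4\times4$ one, and you specify no involution.

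\textbf{Type $B$.} The suggested ``column form $(2^k)$ in a doubled alphabet'' is not a valid identity. Indeed $\qbinom11^2=1$, while $s_{(2)}(x,y)=x^2+xy+y^2$ is never a power of $q$ when $x,y$ are. Moreover, $e_k(X)^2=\sum_{i=0}^k s_{(2^i,1^{2k-2i})}(X)$ and $s_{(k)}^2=\sum_i s_{(2k-i,i)}$ are not rectangles. So Proposition~\ref{prop:nested-TN2} does not apply, and in particular the type~$B$ square does not follow ``directly'' from it.

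The missing idea is the factorization $a^2d^2-b^2c^2=(ad-bc)(ad+bc)$. It reduces both type~$B$ matrices to $2\times2$ minors of the Gaussian matrices $[\qbinom nk]$ and $[\qbinom{n+k}k]$. Those minors lie in $\mathbb N[q]$ by an ordinary single-path tail swap. Take sources $(-n,0)$, sinks $(-k,k)$ or $(0,k)$, and weight an east step at height $h$ by $q^h$; crossed assignments must then meet. Alternatively, write $\qbinom nk=q^{-\binom k2}e_k(Q_n)$ and $\qbinom{n+k}k=h_k(Q_{n+1})$, and use the elementary one-column and one-row analogues of the nested inequality. Your tail swap on both components of a pair of paths could be made to work for $2\times2$ minors. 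As written, though, it specifies neither the sources and sinks nor why crossed assignments must intersect, and the factorization makes it unnecessary.
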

\begin{proof}
For type $A$, put $Q_n=(1,q,\ldots,q^{n-1})$. Principal specialization
of the hook-content formula gives
\begin{align*}
 s_{(2^k)}(Q_n)&=q^{k(k-1)}A_{n,k}(q),\\
 s_{(k,k)}(Q_{n+2})&=q^k A_{n+k,k}(q).
\end{align*}
The first identity is the $d=2$ case of
Chern--Shi~\cite[Lemma 3.8]{ChernShiHoggatt}; the second follows from
the same hook-content product. For any rows $n<m$ and columns $k<l$,
apply Proposition~\ref{prop:nested-TN2} with $X=Q_n$ and
$Y=(q^n,\ldots,q^{m-1})$ for the triangle, and with $X=Q_{n+2}$ and
$Y=(q^{n+2},\ldots,q^{m+1})$ for the square. The resulting polynomial
is the desired minor times a column monomial. The unweighted minor
is already a polynomial, so removing this monomial preserves coefficient
signs. Entries are nonnegative by the same Schur specializations.

For type $B$, first consider the Gaussian matrices
$G_{n,k}=\qbinom{n}{k}$ and $H_{n,k}=\qbinom{n+k}{k}$.
Use east and north lattice steps, giving an east step at height $h$
weight $q^h$ and a north step weight $1$. With sources $S_n=(-n,0)$,
the sinks $(-k,k)$ give $G$, and the sinks $(0,k)$ give $H$.
For either boundary ordering, a pair with crossed source--sink
assignments must intersect. Switching tails at an intersection preserves
the product weight. Equivalently, the Lindstr\"om--Gessel--Viennot
lemma shows that every $2\times2$ Gaussian minor lies in $\mathbb N[q]$.
If its entries are $a,b,c,d$ in row order, entrywise squaring gives
\[
 a^2d^2-b^2c^2=(ad-bc)(ad+bc)\in\mathbb N[q].
\]
This proves both type $B$ assertions. Multiplying column $k$ by
$q^{k(k-1)}$ multiplies any minor by a monomial.
\end{proof}

\begin{remark}[An explicit near-rectangle certificate]
The expression isolated by the one-variable branching rule has the
multiplicity-free expansion, for $1\le k<l$,
\[
 s_{(2^k)}s_{(2^{l-1},1)}-s_{(2^l)}s_{(2^{k-1},1)}
 =\sum_{i=0}^{k}s_{(k+l-i,\,l-1,\,k,\,i)'},
\]
with zero parts omitted and prime denoting conjugation. Indeed, apply
Lemma~\ref{lem:LR-two-row} to the conjugate products, with
$(a,b,u,v)=(k,l,1,0)$. The partitions occurring only in the positive
product have $\lambda_2<l$; the criterion and the size condition force
$\lambda_2=l-1$, $\lambda_3=k$ and $\lambda_1+\lambda_4=k+l$.
Thus they are exactly the partitions displayed above.
\end{remark}

Theorem~\ref{thm:endpoint-TN2} resolves Conjecture~\ref{conj:coefficientwise-endpoints}
through order two, including arbitrary row and column gaps. Minors of
order at least three remain conjectural. Pairwise Schur-positive
comparisons do not by themselves control their alternating determinant
expansions.

\subsection{Explicit minors and an Edrei--Viennot network}

The Edrei product gives more than the sign of the minors: it identifies them with supersymmetric Jacobi--Trudi functions and hence provides a planar-network model.  Fix $\theta>-1$ and write
\[
 \alpha_r=\frac{4}{j_{\theta,r}^2(q)},\qquad r\ge1,
 \qquad
 \beta_m=q^m,\qquad m\ge0.
\]
Let $X=(\beta_0,\beta_1,\ldots)$ and $Y=(\alpha_1,\alpha_2,\ldots)$, and define the supersymmetric complete functions $h_n(X;Y)$ by
\begin{equation}\label{eq:superh}
 \sum_{n\ge0}h_n(X;Y)z^n
 =\frac{\prod_{r\ge1}(1+\alpha_rz)}
 {\prod_{m\ge0}(1-\beta_mz)}.
\end{equation}
By~\eqref{eq:EdreiProduct},
\begin{equation}\label{eq:v-superh}
 v_n=h_n(X;Y).
\end{equation}
For partitions $\lambda\supseteq\mu$ of length at most $r$, put
\[
 s_{\lambda/\mu}(X;Y)
 =\det\bigl[h_{\lambda_i-\mu_j-i+j}(X;Y)\bigr]_{i,j=1}^r,
\]
with $h_m=0$ for $m<0$.  This is the supersymmetric Jacobi--Trudi function of skew shape $\lambda/\mu$.

\begin{proposition}[Explicit Toeplitz minors]\label{prop:superminor}
Let $n_1<\cdots<n_r$ and $k_1<\cdots<k_r$.  If $n_i<k_i$ for some $i$, then
\[
 \det[v_{n_i-k_j}]_{i,j=1}^r=0.
\]
Otherwise define
\[
 \lambda_i=n_{r+1-i}+i,
 \qquad
 \mu_i=k_{r+1-i}+i
 \qquad(1\le i\le r).
\]
Then $\lambda$ and $\mu$ are partitions with $\lambda\supseteq\mu$, and
\begin{equation}\label{eq:superminor}
 \det[v_{n_i-k_j}]_{i,j=1}^r
 =s_{\lambda/\mu}(X;Y)\ge0.
\end{equation}
Consequently, every minor of the Narayana interpolation matrix has the explicit form
\begin{align}\label{eq:K-superminor}
 \det\bigl[K_{n_i,k_j}^{(\theta)}(q)\bigr]_{i,j=1}^r
 &=\left(\prod_{i=1}^rB_{n_i}\right)
 \left(\prod_{j=1}^r\frac{q^{k_j(k_j-1)}}{B_{k_j}}\right)
 s_{\lambda/\mu}(X;Y).
\end{align}
\end{proposition}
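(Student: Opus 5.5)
The plan is to reduce everything to the classical Jacobi--Trudi identity for the Toeplitz matrix of a generating series. I would also use the Lindstr\"om--Gessel--Viennot interpretation for the sign. I would order the rows so that $n_1<\cdots<n_r$, and then reverse both row and column orders. This simultaneous reversal does not change the determinant, so
\[
\det[v_{n_i-k_j}]_{i,j=1}^r=\det[v_{n_{r+1-i}-k_{r+1-j}}]_{i,j=1}^r .
\]
With $\lambda_i=n_{r+1-i}+i$ and $\mu_j=k_{r+1-j}+j$ one has $n_{r+1-i}-k_{r+1-j}=\lambda_i-\mu_j-i+j$. By \eqref{eq:v-superh}, $v_m=h_m(X;Y)$ with $v_m=0$ for $m<0$, so the determinant is literally the Jacobi--Trudi determinant defining $s_{\lambda/\mu}(X;Y)$. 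Strict increase of $n_i$ gives $\lambda_i-\lambda_{i+1}=n_{r+1-i}-n_{r-i}-1\ge0$, and similarly for $\mu$. Hence $\lambda,\mu$ are partitions; a possible common shift (e.g.\ $\mu_r\ge1$) is harmless.

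For the vanishing case, suppose $n_i<k_i$ for some $i$. Then for every row $a\le i$ and column $b\ge i$ we have $n_a\le n_i<k_i\le k_b$, so $v_{n_a-k_b}=0$. The matrix therefore has an $i\times(r-i+1)$ zero block, and since $i+(r-i+1)>r$ the determinant vanishes (Frobenius--K\"onig). Conversely, if $n_i\ge k_i$ for all $i$, then $\lambda_i\ge\mu_i$, which is the containment $\lambda\supseteq\mu$.

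Nonnegativity is the only step needing more than bookkeeping. Either Theorem~\ref{thm:TP} gives $T(v)$ totally positive, so the minor is $\ge0$ directly. Alternatively, one can build the Edrei--Viennot planar network: horizontal edges of weight $\beta_m$ for the factors $(1-\beta_mz)^{-1}$ (repeatable steps), and single-use edges of weight $\alpha_r$ for $(1+\alpha_rz)$. Path sums from source $k$ to sink $n$ are $h_{n-k}(X;Y)$, and LGV shows the minor is a sum over nonintersecting families of positive weights. The infinitely many variables are handled by truncating $X$ and $Y$ to finitely many variables. Each $h_m$ is then a convergent limit, because $\sum\beta_m<\infty$ and $\sum\alpha_r<\infty$ by the Hadamard product \eqref{eq:Hadamard}. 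Finite determinants commute with these limits.

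Finally, \eqref{eq:K-superminor} follows by substituting \eqref{eq:factor} into the minor of $\mathcal K^{(\theta)}(q)$ and pulling the row factors $B_{n_i}$ and column factors $q^{k_j(k_j-1)}/B_{k_j}$ out of the determinant by multilinearity. This is exactly \eqref{eq:minors} with the inner determinant replaced by $s_{\lambda/\mu}(X;Y)$. The main obstacle I expect is only the careful index reversal in the first step, together with justifying the infinite-alphabet supersymmetric functions via absolute convergence. The latter requires $\sum_r\alpha_r<\infty$, which I would extract from order zero of $\mathcal J_\theta^{(2)}$.
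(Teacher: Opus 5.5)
Your proposal is correct and follows essentially the same route as the paper: reverse rows and columns simultaneously to identify the minor with the supersymmetric Jacobi--Trudi determinant, obtain nonnegativity from an Edrei--Viennot planar network via Lindstr\"om--Gessel--Viennot, and derive \eqref{eq:K-superminor} from \eqref{eq:factor}. Beyond the paper, you spell out the zero-block argument for the vanishing case and the partition/containment checks that the paper leaves implicit; your shortcut for nonnegativity is also valid, since total positivity of $T(v)$ is precisely the $\PF_\infty$ conclusion of Theorem~\ref{thm:PF}, which is proved earlier and independently of this proposition.
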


\begin{proof}
Reverse simultaneously the rows and columns of the Toeplitz minor; the two signs cancel.  The resulting $(i,j)$ entry is
\[
 v_{n_{r+1-i}-k_{r+1-j}}
 =h_{\lambda_i-\mu_j-i+j}(X;Y),
\]
so~\eqref{eq:superminor} is exactly the Jacobi--Trudi determinant.  Formula~\eqref{eq:K-superminor} follows from~\eqref{eq:factor}.

For positivity one can either use the usual tableau expansion of the supersymmetric skew Schur function or argue directly by a planar network.  Indeed, if $S$ denotes the lower shift matrix, then finite truncations of the Toeplitz matrix in~\eqref{eq:v-superh} factor as limits of
\begin{equation}\label{eq:network-factor}
 \prod_r(I+\alpha_rS)
 \prod_m(I-\beta_mS)^{-1}.
\end{equation}
The first factors are nonnegative bidiagonal matrices.  On an $N\times N$ truncation, each geometric factor has the elementary factorization
\[
 (I-\beta S)^{-1}
 =(I+\beta E_{N,N-1})\cdots(I+\beta E_{2,1}),
\]
where the product is read in decreasing row index.  Thus~\eqref{eq:network-factor} is the path matrix of a concatenation of elementary planar chips.  The Lindstr\"om--Gessel--Viennot lemma expresses each minor as a nonnegative sum over vertex-disjoint path families.
\end{proof}

\begin{remark}
Proposition~\ref{prop:superminor} answers the planar-network question at the level of the Edrei parameters.  What remains open is a network whose edge weights are given directly as elementary functions of $q$ and $q^\theta$, without passing through the Jackson $q$-Bessel zeros $j_{\theta,r}(q)$.
\end{remark}

\section{Further directions}

The two constructions pose a further structural question: can the
change-of-basis formula~\eqref{eq:two-bases-moments} be enhanced by
an additional positivity-preserving factorization on an appropriate
parameter or variable range? In view of the sign changes of
$\,c_{n,k}(t;q)$, such a factorization would require more than a
naive coefficientwise argument.

The preceding results suggest several questions.

\begin{enumerate}[label=(\arabic*)]
\item Numerical experiments suggest that the zeros of consecutive row polynomials $P_n^{(\theta)}(t;q)$ and $P_{n+1}^{(\theta)}(t;q)$ strictly interlace.  A related question asks whether the normalized rows $\widehat P_n^{(\theta)}$ strictly interlace; their Rogers representations suggest studying $C_n(x;q^{-n-\theta}|q)$ and $C_{n+1}(x;q^{-n-1-\theta}|q)$ on the imaginary axis. These are separate interlacing questions, since the rescaling in~\eqref{eq:Phat} depends on the row index $n$.  Fang, Zhang and Zhao~\cite{FangZhangZhao} prove the corresponding Sturm property at the classical Hoggatt point $(\theta,q)=(1,1)$.  Can their interlacing theorem be lifted to the present continuous-$\theta$, basic-hypergeometric setting, perhaps through a parameter-shift recurrence for Rogers polynomials?

\item Can Theorem~\ref{thm:endpoint-TN2} be extended to minors of order at least three, proving Conjecture~\ref{conj:coefficientwise-endpoints}? The skew-rectangle lemma establishes the mixed-index order-two inequalities, but does not control higher alternating determinants. A flagged-Schur model as in Remark~\ref{rem:flaggedSchur} could provide the missing all-orders refinement.

\item Proposition~\ref{prop:superminor} gives an Edrei--Viennot network whose weights involve the Jackson $q$-Bessel zeros.  Is there a purely $q$-hypergeometric planar network for the same minors, with edge weights given directly in terms of $q$ and $q^\theta$?  Such a model could lead to coefficientwise total positivity.

\item Does the kernel~\eqref{eq:qkernel-intro} preserve strong $q$-log-convexity in a formal-variable sense, thereby simultaneously extending the two transformations studied by Zhu and Sun?

\item Is there a multivariate or Askey--Wilson four-parameter extension in which the present one-parameter interpolation appears as the continuous $q$-ultraspherical specialization?
\end{enumerate}

\end{document}